\theoremstyle{plain}
\newtheorem{Thm}{Theorem}[section]
\newtheorem{Lem}[Thm]{Lemma}
\newtheorem{Prop}[Thm]{Proposition}
\theoremstyle{definition}
\newtheorem{Def}[Thm]{Definition}
\newtheorem{Def-Lem}[Thm]{Definition-Lemma}
\newtheorem{Cond}[Thm]{Condition}
\newtheorem{Rem}[Thm]{Remark}
\newtheorem*{Ack}{Acknowledgments}
\newtheorem{Ex}[Thm]{Example}
\newcommand{\Aut}{\operatorname{Aut}}
\newcommand{\codim}{\operatorname{codim}}
\newcommand{\Proj}{\operatorname{Proj}}
\newcommand{\prt}{\partial}
\newcommand{\Sing}{\operatorname{Sing}}
\newcommand{\Spec}{\operatorname{Spec}}
\newcommand{\Cl}{\operatorname{Cl}}
\newcommand{\Int}{\operatorname{Int}}
\newcommand{\bNE}{\operatorname{\overline{NE}}}
\newcommand{\Bs}{\operatorname{Bs}}
\newcommand{\Exc}{\operatorname{Exc}}
\newcommand{\mult}{\operatorname{mult}}
\newcommand{\lcm}{\operatorname{lcm}}
\newcommand{\ord}{\operatorname{ord}}
\newcommand{\lct}{\operatorname{lct}}
\newcommand{\Supp}{\operatorname{Supp}}
\newcommand{\QI}{\mathrm{QI}}
\newcommand{\EI}{\mathrm{EI}}
\newcommand{\wprod}{\operatorname{wp}}
\newcommand{\Diff}{\operatorname{Diff}}
\newcommand{\mbA}{\mathbb{A}}
\newcommand{\mbC}{\mathbb{C}}
\newcommand{\mbP}{\mathbb{P}}
\newcommand{\mbQ}{\mathbb{Q}}
\newcommand{\mbZ}{\mathbb{Z}}
\newcommand{\mcF}{\mathcal{F}}
\newcommand{\mcH}{\mathcal{H}}
\newcommand{\mcI}{\mathcal{I}}
\newcommand{\mcO}{\mathcal{O}}
\newcommand{\mcW}{\mathcal{W}}
\newcommand{\msp}{\mathsf{p}}
\newcommand{\msq}{\mathsf{q}}
\newcommand{\msi}{\mathsf{i}}
\newcommand{\msI}{\mathsf{I}}
\newcommand{\Ibr}{\mathsf{I}_{\mathsf{br}}}
\newcommand{\IdP}{\mathsf{I}_{\mathsf{dP}}}
\newcommand{\IF}{\mathsf{I}_{\mathsf{F}}}
\newcommand{\IFi}{\mathsf{I}_{\mathsf{F},(\mathrm{i})}}
\newcommand{\IFii}{\mathsf{I}_{\mathsf{F},(\mathrm{ii})}}
\newcommand{\IFiia}{\mathsf{I}_{\mathsf{F},(\mathrm{iia})}}
\newcommand{\IFiib}{\mathsf{I}_{\mathsf{F},(\mathrm{iib})}}
\newcommand{\IFiic}{\mathsf{I}_{\mathsf{F},(\mathrm{iic})}}
\newcommand{\ratmap}{\dashrightarrow}
\def\imod#1{\allowbreak\mkern10mu({\operator@font mod}\,\,#1)}
\title[Alpha invariants of birationally bi-rigid Fano 3-folds I]{Alpha invariants of birationally bi-rigid\\ Fano 3-folds I}
\author{In-Kyun~Kim \and Takuzo~Okada \and Joonyeong~Won}
\address{Research institute of Mathematics, Seoul National University, Seoul 08826, Korea}
\email{soulcraw@gmail.com}
\address{Department of Mathematics, Faculty of Science and Engineering, Saga University, Saga 840-8502 Japan}
\email{okada@cc.saga-u.ac.jp}
\address{Center for Geometry and Physics, Institute for Basic Science (IBS) 77 Cheongam-ro, Nam-gu, Pohang, Gyeongbuk, 37673, Korea}
\email{leonwon@kias.re.kr}
\subjclass[2010]{14J45 \and 53C25}
\date{}
\begin{document}

\begin{abstract}
We compute global log canonical thresholds of certain birationally bi-rigid  Fano $3$-folds embedded in weighted projective spaces as complete intersections of codimension $2$ and prove that they admit an orbifold K\"ahler--Einstein metric and are K-stable.
As an application, we give examples of super-rigid affine Fano $4$-folds.
\end{abstract}

\maketitle


\section{Introduction} \label{sec:intro}

Throughout the article, the ground field is assumed to be the field of complex numbers.
The alpha invariant, which is also known as the global log canonical threshold, of a Fano variety is an invariant which measures singularities of pluri-anticanonical divisors.
The most important application of this invariant is as follows: If a Fano variety $X$ with only quotient singularities satisfies $\lct (X) > \dim X/(\dim X + 1)$, where $\lct (X)$ denotes the global log canonical threshold of $X$ (see Definition \ref{def:glct}), then $X$ admits an orbifold K\"ahler--Einstein metric (see \cite{DK01}, \cite{Na90}, \cite{Ti87}) and is K-stable (see \cite{OS}).


We recall known results for Fano 3-folds which are complete intersections in weighted projective spaces.
In this paper, a weighted hypersurface in $\mbP (a_0,\dots,a_4)$ defined by an equation of degree $d$ that is quasi-smooth, well formed, has only terminal singularities and $\sum_{i=0}^4 a_i - d = 1$ is simply called a {\it Fano $3$-fold weighted hypersurface of index $1$}.
It is known that they form $95$ families (see \cite{IF} and \cite{CCC}) and their global log canonical thresholds have been computed in \cite{CPW}, \cite{Ch08}, \cite{Ch09b} and \cite{Ch09a}.

\begin{Thm}[{\cite[Corollary 1.45]{Ch09a}}] \label{thm:codim1br}
Let $X$ be a general Fano $3$-fold weighted hypersurface of index $1$ $($i.e.\  a general member of one of the $95$ families$)$.
Then $X$ admits an orbifold K\"ahler--Einsten metric and is K-stable.
\end{Thm}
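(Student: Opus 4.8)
The statement is a direct application of the analytic/K-stability criterion recalled in the introduction: since $\dim X = 3$, the required inequality is $\lct(X) > 3/4$, and the whole content of the theorem is a lower bound for the global log canonical threshold. The plan is to prove $\lct(X) > 3/4$ for a general member of each family (aiming for the sharp value $\lct(X) = 1$ wherever the family permits it, the weaker bound always sufficing). By Definition \ref{def:glct} this amounts to showing that for every effective $\mbQ$-divisor $D \sim_{\mbQ} -K_X$ the log pair $(X, \lambda D)$ is log canonical with $\lambda = 3/4$. I would argue by contradiction: assuming $(X,\lambda D)$ fails to be log canonical, choose a minimal center $Z$ of non-log-canonical singularities and extract a numerical contradiction from the very small anticanonical degree $(-K_X)^3$ of an index-$1$ weighted hypersurface.

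First I would treat the case where $Z$ is a curve $C$. Non-log-canonicity along $C$ forces the large multiplicity $\mult_C D > 1/\lambda$. Restricting to a general member $S \in |-K_X|$ (a K3-type surface, by adjunction) and estimating $D|_S$, or equivalently intersecting two general members of an auxiliary mobile linear system and evaluating against $-K_X$, yields an inequality bounding $(-K_X)^3$ from below by a multiple of $(\mult_C D)\cdot\big((-K_X)\cdot C\big)$. Because $(-K_X)^3$ is minuscule for these families while $(-K_X)\cdot C$ is bounded below, this is impossible. The genericity hypothesis is used precisely here, to guarantee that general anticanonical members are suitably transverse away from the finitely many quotient singular points.

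The substantial case is $Z = \{p\}$ a point. If $p$ is a smooth point, I would invoke Corti-type multiplicity inequalities for the self-intersection cycle $D_1 \cdot D_2$ of general members of the associated system, exactly paralleling the exclusion of smooth \emph{maximal centers} in the Corti--Pukhlikov--Reid proof of birational rigidity, and contradict the smallness of $(-K_X)^3$. If $p$ is one of the cyclic quotient singularities $\tfrac1r(a,b,c)$, I would pass to the orbifold chart or perform the weighted blow-up of $p$ and bound the local log canonical threshold there by inversion of adjunction on a general orbifold surface germ; the relevant local anticanonical degree is of size $1/(r\,abc)$-type and again beats the forced multiplicity.

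The main obstacle, and the reason the argument must be run case by case over all $95$ families, lies in the analysis at the quotient singularities together with the base locus $\Bs|-K_X|$. For the families in which $|-K_X|$ is not base-point free one cannot simply cut with a general anticanonical member, and the infimum defining $\lct(X)$ is typically approached at (or near) a base point sitting on a singular point; controlling it requires the same delicate weighted-blow-up estimates that underlie the exclusion of maximal centers in birational rigidity. I expect this singular-point analysis to be the crux, the curve and smooth-point cases reducing to comparatively routine numerical bounds. Assembling these estimates across the $95$ families is carried out in \cite{CPW}, \cite{Ch08}, \cite{Ch09b} and \cite{Ch09a}, and the theorem follows.
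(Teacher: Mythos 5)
The paper does not actually prove this statement: it is quoted verbatim as a known result from \cite[Corollary 1.45]{Ch09a}, with the underlying computations of global log canonical thresholds of the $95$ families attributed to \cite{CPW}, \cite{Ch08}, \cite{Ch09b} and \cite{Ch09a}. Your proposal correctly identifies the mechanism (the criterion $\lct(X) > 3/4$ from \cite{DK01}, \cite{Na90}, \cite{Ti87} together with \cite{OS}, plus exclusion of non-log-canonical centers at curves, smooth points and quotient singular points), but, exactly like the paper, it ultimately defers the substantive case-by-case analysis over the $95$ families to those same references, so it is essentially the same approach.
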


We note that it is proved in \cite{CP} and \cite{CPR} that a Fano $3$-fold weighted hypersurface of index $1$ is birationally rigid, that is, it has a unique (up to isomorphism) structure of a Mori fiber space in its birational equivalence class.

A codimension $2$ weighted complete intersection of type $(d_1, d_2)$ in a weighted projective $5$-space $\mbP (a_0,\dots, a_5)$ that is quasi-smooth, well formed, has only terminal singularities and $\sum_{i=0}^5 a_i - (d_1 + d_2) = 1$ is simply called a {\it codimension $2$ Fano $3$-fold WCI $($weighted complete intersection$)$ of index $1$}.
By \cite{IF} and \cite{CCC}, the codimension $2$ Fano $3$-fold WCIs of index $1$ are known to form $85$ families which are specified as family No.~$\msi$ with $\msi \in \msI := \{1, 2, \cdots, 85\}$.
In \cite{Okada1}, birational geometry of these Fano $3$-folds are studied and the $85$ families are divided into the disjoint union of $3$ pieces $\msI = \msI_{\mathsf{br}} \cup \IF \cup \IdP$ with $|\Ibr| = 19$, $|\IF| = 60$ and $|\IdP| = 6$ according to their birational properties.
It is proved in \cite{AZ} and \cite{Okada1} that a codimension $2$ Fano $3$-fold WCI is birationally rigid if and only if it belongs to family No.~$\msi$ for some $\msi \in \Ibr$.
We have the following result for these birationally rigid varieties.

\begin{Thm}[{\cite[Theorem 1.5]{KOW}, \cite[Theorem 1.3]{Zhuang}}] \label{thm:codim2br}
Let $X$ be a codimension $2$ Fano $3$-fold WCI of index $1$ which is a general member of family No.~$\msi \in \Ibr$.
Then $X$ admits an orbifold K\"ahler--Einstein metric and is K-stable. 
\end{Thm}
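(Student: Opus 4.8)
The plan is to reduce the assertion to a lower bound on the global log canonical threshold and then invoke the criterion recalled at the start of the introduction. Since $\dim X = 3$, that criterion says it is enough to prove the strict inequality $\lct(X) > 3/4$: the orbifold K\"ahler--Einstein metric then comes from \cite{DK01}, \cite{Na90}, \cite{Ti87}, and K-stability from \cite{OS}. Hence the whole problem is the estimate $\lct(X) > 3/4$, and the remainder of the argument would be devoted to it.

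To bound $\lct(X)$ from below I would show that for every effective $\mbQ$-divisor $D \sim_{\mbQ} -K_X$ the pair $(X, \tfrac{3}{4}D)$ is log canonical, with a little room to spare so as to obtain a strict inequality. This is a pointwise condition, and the key reduction is that there is a finite collection $Z \subset X$ of distinguished curves and points---determined by the weights and by the base locus of $|-K_X| = |\mcO_X(1)|$, not by $D$---outside of which log canonicity is automatic. Indeed, on $X \setminus Z$ a uniform bound $\mult_p D < 4/3$, obtained by intersecting $D$ with general members of a suitable subsystem of $|-K_X|$ and using the smallness of the relevant intersection numbers, already gives $\lct_p(X,D) \ge 1/\mult_p D > 3/4$. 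The entire content is therefore the verification along $Z$.

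The heart of the proof, and the step I expect to be the main obstacle, is this local analysis along $Z$. At each cyclic quotient singularity $P \in \Sing(X)$, of type $\tfrac{1}{r}(a,b,c)$, I would pass to the orbifold chart and study a weighted blow-up, comparing the discrepancy of the exceptional divisor against $\tfrac{3}{4}$ times the multiplicity of the proper transform of $D$; genericity of $X$ within family No.~$\msi$ enters here to force the local equations into general position, so that no unexpected centre of non--log-canonical singularities is produced. For the distinguished curves $C$---typically the low-degree curves through the singular points that the weights make unavoidable---I would run an exclusion argument, bounding $\mult_C D$ by intersecting $D$ with the mobile part of an auxiliary system and thereby ruling out $C$ as such a centre. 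This verification has to be carried out family by family for the $19$ members of $\Ibr$, and it is precisely the interplay between the weights, the local singularity types and the base locus of $|-K_X|$ that makes it delicate.

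Finally I would record the alternative route of \cite{Zhuang}, which avoids computing $\lct(X)$ altogether. Since the members of $\Ibr$ are birationally rigid by \cite{AZ} and \cite{Okada1}, one can instead appeal to the general principle tying birational (super)rigidity to K-stability: a birationally superrigid Fano variety meeting an appropriate local positivity hypothesis is K-stable, so that verifying that hypothesis for the families in $\Ibr$ yields K-stability directly, with the orbifold K\"ahler--Einstein metric following as before. The two approaches are complementary, and the alpha-invariant computation of \cite{KOW} is the one in keeping with the theme of this paper.
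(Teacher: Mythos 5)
This theorem is not proved in the paper at all: it is a background result quoted from \cite[Theorem 1.5]{KOW} and \cite[Theorem 1.3]{Zhuang}, so there is no internal proof to compare your attempt against. Judged against those sources, your outline correctly reconstructs both available routes. The reduction to the bound $\lct(X) > 3/4$ via \cite{DK01}, \cite{Na90}, \cite{Ti87} and \cite{OS}, followed by exclusion arguments (uniform multiplicity bounds away from a distinguished locus, then local analysis at the cyclic quotient singularities via Kawamata-type weighted blowups, carried out family by family) is exactly the strategy of \cite{KOW}, which in fact establishes the stronger statement $\lct(X) = 1$ for general members; and it is the same machinery (isolating classes and the exclusion lemmas, cf.\ Lemmas \ref{lem:exclL}--\ref{lem:singptNE}) that the present paper deploys to prove its own Theorems \ref{thm:main1} and \ref{thm:main2} for the bi-rigid families.

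Two caveats. First, your text is a plan rather than a proof: the step you yourself flag as the main obstacle --- the verification at the singular points and distinguished curves for each of the $19$ families in $\Ibr$ --- is precisely where all of the content of \cite{KOW} lies, and nothing in your sketch discharges it; as written, the argument establishes the theorem only modulo that case analysis. Second, in the alternative route you appeal to birational \emph{rigidity} (from \cite{AZ}, \cite{Okada1}), but the K-stability criteria of \cite{SZ} and \cite{Zhuang} require birational \emph{superrigidity} together with a positivity hypothesis (e.g.\ $\lct(X) \ge 1/2$ in \cite{SZ}). For the families in $\Ibr$ superrigidity does hold --- the rigidity proofs exclude all maximal centres without resorting to untwisting involutions --- but your argument needs to make this distinction explicit, since rigidity alone does not feed into Zhuang's theorem.
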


We note that it is conjectured in \cite{KOW} that a birationally rigid Fano variety (of Picard number $1$) is K-stable.
See \cite{OO} and \cite{SZ} for partial results concerning this conjecture.

By Theorems \ref{thm:codim1br} and \ref{thm:codim2br}, K-stability and the existence of K\"ahler--Einstein metric of birationally rigid quasi-smooth and well formed Fano 3-fold weighted complete intersections of index $1$ (of any codimension) are proved under a generality assumption.

The aim of this article is to work with codimension $2$ Fano $3$-fold WCIs of index $1$ which are close to be but not birationally rigid.
Let $X$ be a member of family No.~$\msi \in \IF$.
Then, it is proved that there is a Fano $3$-fold $X'$ of Picard number $1$, which is birational but not isomorphic to $X'$.
It is believed that $X$ is {\it birationally bi-rigid} which means that a Mori fiber space which is birational to $X$ is isomorphic to either $X$ or $X'$.
In fact, in \cite{Okada2, Okada3}, birational bi-rigidity is proved for many families No.~$\msi \in \IF$.
We can still expect that $\lct (X)$ is large and the main aim of this article is to confirm this for $X$ with relatively small anti-canonical degree $(-K_X)^3$.

\begin{Thm} \label{mainthm1}
Let $X$ be a codimension $2$ Fano $3$-fold WCI of index $1$ which is a general member of family No.~$\msi$ with
\[
\msi \in \{42, 55, 66, 68, 69, 77, 79, 81, 80, 82, 83\} \subset \IF.
\]
Then $\lct (X) = 1$.
In particular $X$ is K-stable and admits an orbifold K\"{a}hler--Einstein metric. 
\end{Thm}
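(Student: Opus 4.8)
The plan is to establish the two inequalities $\lct(X) \le 1$ and $\lct(X) \ge 1$ separately, the latter being the substantial one. Throughout write $A = -K_X = \mcO_X(1)$ for the ample generator of the class group, and recall that for a codimension $2$ WCI of type $(d_1,d_2)$ in $\mbP(a_0,\dots,a_5)$ one has $A^3 = d_1 d_2/(a_0 \cdots a_5)$; for the eleven families listed this is a very small rational number, since these are precisely the members of $\IF$ of least anti-canonical degree. The upper bound $\lct(X) \le 1$ is the easy direction: it suffices to produce a single effective $\mbQ$-divisor $D_0 \sim_{\mbQ} A$ with $\lct(X, D_0) \le 1$. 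Such a $D_0$ is found among the members of $|A|$ that are forced to be singular along $\Bs|A|$; these base points sit at the cyclic quotient singularities, and a weighted blow-up there produces a valuation of log discrepancy $\le 0$ for $(X, D_0)$ at coefficient $1$.

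For the lower bound I argue by contradiction: suppose $\lct(X) < 1$, so there is an effective $\mbQ$-divisor $D \sim_{\mbQ} A$ with $(X, D)$ not log canonical, and let $Z$ be a minimal center of non-log-canonicity. First I would exclude the possibility that $Z = C$ is a curve. Non-log-canonicity along $C$ forces the coefficient $a = \mult_C D$ to satisfy $a \ge 1$, whence
\[
A^3 = A^2 \cdot D \ge a\,(A \cdot C) \ge A \cdot C .
\]
Since the $A$-degree of any irreducible curve on $X$ is bounded below by an explicit positive rational number read off from the weights, and $A^3$ is tiny, only finitely many low-degree curves $C$ survive this inequality, and these are ruled out as non-log-canonical centers by direct inspection. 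Hence $Z = P$ is a point.

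If $P$ is a smooth point of $X$, I would pick a sufficiently general surface $S \in |A|$ through $P$ (replacing $|A|$ by an auxiliary pencil where it fails to be free at $P$) and invoke inversion of adjunction, so that $(S, D|_S)$ is not log canonical at the smooth point $P \in S$. By the standard surface estimate, a non-log-canonical point of a pair $(S, D|_S)$ with $D|_S$ numerically equivalent to the nef class $A|_S$ forces $(A|_S)^2 > 4$, i.e. $A^3 > 4$, contradicting $A^3 < 1$. Thus $P$ must be one of the cyclic quotient singularities.

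The case that $P$ is a terminal cyclic quotient singularity of type $\tfrac1r(1,a,r-a)$ is the genuinely hard one and the main obstacle. Here I would pass to the orbifold chart and perform the Kawamata (economic) weighted blow-up $\sigma\colon Y \to X$ at $P$, with exceptional divisor $E = \Exc(\sigma)$ of discrepancy $1/r$, and write $\sigma^* D = \widetilde{D} + m E$. Log canonicity of $(X, D)$ at $P$ is governed by the inequality $m \le 1 + \tfrac1r$ together with the behaviour of $\widetilde{D}$ over $E$, and I would bound $m$ from above by intersecting $\widetilde{D}$ with the proper transforms on $Y$ of general members of $|A|$; the smallness of $A^3$ should again force log canonicity. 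The delicate feature of these particular families is that this distinguished singular point $P$ is exactly the center from which the Sarkisov link to the second Mori fiber space $X'$ issues, so the numerical estimate is nearly sharp and the naive bound is not quite enough. Carrying it through requires choosing the weights of $\sigma$ to match the monomials actually occurring in the defining equations of degrees $d_1, d_2$, keeping careful track of $\Bs|A|$ at the singularities, and performing a case-by-case verification at each quotient singularity of each of the eleven families. This is where essentially all of the computation will reside.
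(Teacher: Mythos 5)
Your overall skeleton (upper bound trivial; for the lower bound rule out curve centers, smooth points, then singular points) is reasonable, but each of the three substantive steps has a genuine gap, and the middle one fails outright. For smooth points, your argument needs a "sufficiently general surface $S\in|A|$ through $P$", but on these varieties no such surface exists: for the families with $a_1>1$ (Nos.~66, 68, 81, 82) the system $|A|$ consists of the \emph{single} divisor $H_x$, and even when $a_1=1$ it is only the pencil spanned by $x,y$, so through a point off the base locus $L_{xy}$ there passes exactly one member, the tangent divisor $T_{\msp}$, which may well be singular at $\msp$; controlling its singularity is itself a nontrivial generality statement (the paper's Lemmas \ref{lem:multtang} and \ref{lem:lcttang}, proved by dimension counts in the parameter space). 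Moreover, the ``standard surface estimate'' you invoke --- that a non-log-canonical point of $(S,D|_S)$ with $D|_S\equiv A|_S$ nef forces $(A|_S)^2>4$ --- is not a correct statement (on $\mbP^1\times\mbP^1$ the doubled ruling is nef of square $0$ and the pair is not lc); what inversion of adjunction actually gives is only $\mult_P(D|_S)>1$, and converting that into a contradiction requires intersecting $D$ with curves or divisors of controlled degree through $P$. That is exactly what the paper's machinery of $\msp$-isolating classes and the new $(\msp,\Gamma)$-isolating classes (Lemmas \ref{lem:exclG} and \ref{lem:criwisol}), combined with the lc auxiliary divisor $T_{\msp}$ and the numerical thresholds $a_1a_5(A^3)\le 1$ of Remark \ref{rem:numcharact}, is designed to do, and it is absent from your proposal.

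The other two steps are asserted rather than proved, and in both cases the missing content is where the paper's real work lies. Ruling out curves ``by direct inspection'' hides the borderline case: the flopping curves $\Gamma\subset\Upsilon_{\msp}$ have $(A\cdot\Gamma)=1/a_k$, comparable to $(A^3)$, and excluding non-lc points along them requires the dedicated analysis of Proposition \ref{prop:lctfflopcurve} --- normality of the first tangent divisor $T_1$, the intersection numbers $(\Gamma_l\cdot\Gamma_m)_{T_1}=a_{i_1}a_{i_2}/d_1a_k$, inversion of adjunction on $T_1$, and the hypothesis $\wprod(\msp)\ge d_1$. For the singular points you concede that the naive bound obtained by intersecting with proper transforms of members of $|A|$ ``is not quite enough'' and defer to unspecified case-by-case computation; but the paper does not rescue this bound by cleverer weights --- it uses structurally different arguments: for unmarked singular points the Mori-cone condition $(-K_Y)^2\notin\Int\bNE(Y)$ imported from Okada's classification together with Lemma \ref{lem:singptNE}; for the points marked $\QI$/$\EI$ and the distinguished points of the $\IFi$ families, the pair of divisors $S_1\in|A|$, $S_2\in|a_1A|$ cutting out the irreducible curve $L_{xy}$ and Lemma \ref{lem:exclL}; and for the distinguished points of Nos.~79, 80, 83 the special argument of Lemma \ref{lem:somedistsingpt} using $H_x\cap\Upsilon_{\msp}=\{\msp\}$ and $a_ka_{j_2}(A^3)\le 2$. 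So your proposal correctly locates the difficulties but does not contain the ideas that overcome them.
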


We refer readers to Section \ref{sec:deffam} for detailed information on the families above.
We discuss an application of Theorem \ref{mainthm1} in Section \ref{sec:ex} and give examples of super-rigid affine Fano $4$-folds, where the notion of super-rigid Fano variety is introduced by Cheltsov, Dubouloz and Park \cite{CDP}.

\begin{Ack}
The first author was supported by the National Research Foundation of Korea(NRF)
grant funded by the Korea government(MSIP) (No. NRF - 2017R1C1B1011921). 
The second author is partially supported by JSPS KAKENHI Grant Number JP18K03216. The
third author was supported by IBS-R003-D1, Institute for Basic Science in Korea.
\end{Ack}
\section{Preliminaries} \label{sec:prelim}

\subsection{Basic definitions} \label{sec:basicdefs}

Let $X$ be a Fano variety, i.e.\ a normal projective $\mbQ$-factorial variety with at most terminal singularities such that $-K_X$ is ample.

\begin{Def} \label{def:glct}
Let $(X,D)$ be a pair, that is, $D$ is an effective $\mbQ$-divisor, and let $\msp \in X$ be a point.
We define the {\it log canonical threshold} (LCT, for short) of $(X,D)$ and the {\it log canonical threshold} of $(X,D)$ {\it at} $\msp$ to be the numbers
\[
\begin{split}
\lct (X,D) &= \sup \{\, c \mid \text{$(X, c D)$ is log canonical} \,\}, \\
\lct_{\msp} (X,D) &= \sup \{\, c \mid \text{$(X, c D)$ is log canonical at $\msp$} \,\},
\end{split}
\]
respectively.
We define
\[
\lct_{\msp} (X) = \inf \{\, \lct_{\msp} (X,D) \mid D \text{ is an effective $\mbQ$-divisor}, D \equiv -K_X \,\},
\]
and for a subset $\Sigma \subset X$, we define
\[
\lct_{\Sigma} (X) = \inf \{\, \lct_{\msp} (X) \mid \msp \in \Sigma \, \}.
\]
The number $\lct (X) := \lct_X (X)$ is called the {\it global log canonical threshold} (GLCT, for short) or the {\it alpha invariant} of $X$
\end{Def}

Note that the numerical equivalence $D_1 \equiv D_2$ between $\mbQ$-divisors on a Fano variety $X$ is equivalent to $\mbQ$-linear equivalence $D_1 \sim_{\mbQ} D_2$. 
The following relation between $\mult_{\msp} (D)$ and $\lct_{\msp} (X,D)$ are used to bound $\lct_{\msp} (X,D)$ from below.

\begin{Lem}[{\cite[8.10 Lemma]{Kol}}]
For a nonsingular point $\msp \in X$ and an effective $\mbQ$-divisor $D$ on $X$, the inequality
\[
\frac{1}{\mult_{\msp} (D)} \le \lct_{\msp} (X,D)
\]
holds.
\end{Lem}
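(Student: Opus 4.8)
Since the asserted inequality is local at $\msp$, I would first pass to a (Zariski or étale) neighbourhood of $\msp$ and thereby assume $X$ is nonsingular; set $n = \dim X$ and $m = \mult_{\msp}(D)$, and recall that $\mult_{\msp}(D) = \sum_i c_i \mult_{\msp}(D_i)$ if $D = \sum_i c_i D_i$ with the $D_i$ prime and $c_i > 0$. The statement to prove is that $(X, cD)$ is log canonical at $\msp$ for $c = 1/m$, equivalently $\lct_{\msp}(X,D) \ge 1/m$. The plan is to argue by induction on $n$: I would cut $X$ by a general hypersurface $H$ through $\msp$, apply the inductive hypothesis on $H$, and then lift log canonicity from $H$ to $X$ by inversion of adjunction.

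The base case $n = 1$ is immediate: then $\msp$ is a smooth point of a curve, $D = m[\msp]$ near $\msp$, and $(X, cD)$ is log canonical at $\msp$ exactly when $cm \le 1$, so $\lct_{\msp}(X,D) = 1/m$. For $n \ge 2$ I would choose local coordinates $x_1, \dots, x_n$ centred at $\msp$ and take $H = \{\, a_1 x_1 + \dots + a_n x_n = 0 \,\}$ for a general vector $(a_i)$. By Bertini such an $H$ is nonsingular near $\msp$ and contains no component of $D$, so $D|_H$ is a well-defined effective $\mbQ$-divisor on $H$. The decisive point is that multiplicity is unchanged by this cut: writing a local equation $f_i$ of $D_i$ with tangent cone $(f_i)_{m_i}$ of degree $m_i = \mult_{\msp}(D_i)$, for general $H$ the restricted form $(f_i)_{m_i}|_H$ is again nonzero of degree $m_i$, so $\mult_{\msp}(D_i|_H) = m_i$ and hence $\mult_{\msp}(D|_H) = \sum_i c_i m_i = m$.

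Applying the inductive hypothesis to the nonsingular point $\msp$ on the $(n-1)$-fold $H$ gives $\lct_{\msp}(H, D|_H) \ge 1/\mult_{\msp}(D|_H) = 1/m = c$, so $(H, c\,D|_H)$ is log canonical at $\msp$. Because both $X$ and $H$ are nonsingular, the different satisfies $\Diff_H(cD) = c\,D|_H$, and I would invoke inversion of adjunction (Koll\'ar--Shokurov) for the smooth divisor $H \subset X$ to conclude that $(X, H + cD)$ is log canonical in a neighbourhood of $\msp$. Since discarding the effective divisor $H$ can only improve the singularities of the pair, $(X, cD)$ is then log canonical at $\msp$, giving $\lct_{\msp}(X,D) \ge c = 1/m$, as required.

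The two genuine inputs are the invariance of multiplicity under a general hyperplane section and inversion of adjunction, and the latter is the real engine of the proof; I would simply quote it in the clean form available for a nonsingular divisor inside a nonsingular variety rather than reprove it. The hard part is therefore conceptual rather than computational: one must ensure that a single general $H$ works simultaneously for Bertini nonsingularity, for avoiding all components of $D$, and for keeping every tangent-cone restriction $(f_i)_{m_i}|_H$ nonzero. Each of these conditions fails only on a proper closed subset of choices of $(a_i)$, so a general $H$ meets all of them at once; verifying this, together with the elementary fact that restricting a nonzero degree-$m$ form to a general hyperplane again yields a nonzero form of degree $m$, is the only point that needs care.
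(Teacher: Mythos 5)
Your proof is correct: the paper itself gives no argument for this lemma, quoting it directly from Koll\'ar's \emph{Singularities of pairs} (8.10 Lemma), and your induction on dimension---cutting with a general hyperplane through $\msp$ that preserves $\mult_{\msp}$, applying the inductive hypothesis, and lifting via inversion of adjunction (using that $\Diff_H(cD) = cD|_H$ for smooth $H \subset X$, then discarding the effective divisor $H$)---is essentially the standard proof of that cited result. The points you flag as needing care (a single general $H$ achieving Bertini smoothness, avoidance of the components of $D$, and nonvanishing of each restricted tangent cone) are indeed the only delicate ones, and each fails only on a proper closed subset of hyperplanes, exactly as you say.
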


The following fact is frequently used.

\begin{Rem} \label{rem:covex}
Let $\msp$ be a point on a normal variety $X$.
Let $D_1$ and $D_2$ be effective $\mbQ$-divisors on $X$.
If both $(X, D_1)$ and $(X, D_2)$ are log canonical at $\msp$, then the pair $(X, \lambda D_1 + (1-\lambda) D_2)$ is log canonical at $\msp$ for any $\lambda \in \mbQ$ such that $0 \le \lambda \le 1$.
In particular, if $\lct_{\msp} (X) < 1$, then there exists an irreducible effective $\mbQ$-divisor $D \sim_{\mbQ} -K_X$ such that $(X, D)$ is not log canonical at $\msp$.
Here a $\mbQ$-divisor is {\it irreducible} if its support $\Supp (D)$ is a prime divisor.
\end{Rem}

We recall the definition of isolating class which is introduced by Corti, Pukhlikov and Reid \cite{CPR}. 

\begin{Def}
Let $L$ be a Weil divisor class on a variety $V$ and $\msp \in V$ a nonsingular point.
We say that a closed subset $\Xi \subset X$ is a $\msp$-{\it isolating set} if $\msp \in \Xi$ is an isolated component of $\Xi$.
We say that a Weil divisor class $L$ is a $\msp$-{\it isolating class} if the base locus of $|\mcI_{\msp}^k (k L)|$ is a $\msp$-isolating set for some $k > 0$.
\end{Def}

We generalize the notion of isolating class.
This generalization will play an important role in the computations of LCTs at nonsingular points.

\begin{Def}
Let $\msp \in V$ be a nonsingular point and $\Gamma$ a closed subset of $V$ such that any irreducible component of $\Gamma$ contains $\msp$.
We say that a closed subset $\Xi \subset X$ is a $(\msp, \Gamma)$-{\it isolating set} if $\msp \in \Xi$ and any component of $\Xi$ passing through $\msp$ is contained in $\Gamma$. 
We say that a Weil divisor class $L$ is a $(\msp, \Gamma)$-{\it isolating class} if there exists a positive integer $k$ such that the base locus of $|\mcI^k_{\msp} (k L)|$ is a $(\msp, \Gamma)$-isolating class.
\end{Def}

Note that a $\msp$-isolating class is a $(\msp, \Gamma)$-isolating class for any $\Gamma \subset X$ containing $\msp$, and that $(\msp, \msp)$-isolating class is the usual $\msp$-isolating class.

Let $\msp \in V$ be a $3$-dimensional terminal quotient singularity of type $\frac{1}{r} (1, a, r-a)$, where $r$ and $a$ are coprime positive integers with $r > a$.
Let $\varphi \colon W \to V$ be the weighted blowup of $V$ at $\msp$ with weight $\frac{1}{r} (1,a,r-a)$.
By \cite{Kawamata}, $\varphi$ is the unique divisorial contraction centered at $\msp$ and we call $\varphi$ the {\it Kawamata blowup} of $V$ at $\msp$.
If we denote by $E$ the $\varphi$-exceptional divisor, then $E \cong \mbP (1,a,r-a)$ and we have
\[
K_W = \varphi^*K_V + \frac{1}{r} E,
\]
and
\[
(E^3) = \frac{r^2}{a (r-a)}.
\]

\begin{Def}
For a $3$-dimensional terminal quotient singularity $\msp \in V$ of type $\frac{1}{r} (1, a, r-a)$, we define
\[
\operatorname{wp} (\msp) =  a (r-a)
\]
and call it the {\it weight product} of $\msp \in V$.
\end{Def}

\subsection{Methods}

\subsubsection{Methods for computing LCT at a point}

We recall methods for computing LCTs and consider a generalization.

\begin{Lem}[{\cite[Lemma 2.5]{KOW}}] \label{lem:exclL}
Let $V$ be a normal projective $\mbQ$-factorial $3$-fold such that $-K_V$ is nef and big, and let $\msp \in V$ be either a nonsingular point or a terminal quotient singular point of index $r$ $($Below we set $r = 1$ when $\msp \in V$ is a nonsingular point$)$.
Suppose that there are prime divisors $S_1, S_2$ on $V$ with the following properties.
\begin{enumerate}
\item $S_1 \sim_{\mbQ} - c_1 V$ and $S_2 \sim_{\mbQ} - c_2 K_V$ for some positive $c_1, c_2 \in \mbQ$.
\item The pair $(X, \frac{1}{c_1} S_1)$ is log canonical at $\msp$.
\item The scheme-theoretic intersection $\check{\Gamma} := \rho_{\msp}^* S_1 \cap \rho_{\msp}^* S_2$ is an irreducible and reduced curve such that $0 < \mult_{\check{\msp}} (\check{\Gamma}) \le c_2$, where $\rho_{\msp}$ denotes the index one cover of an open neighborhood of $\msp \in V$ and $\check{\msp}$ is the preimage of $\msp$ via $\rho_{\msp}$.
\item $r c_1 c_2 (-K_V)^3 \le 1$.
\end{enumerate}
Then $\lct_{\msp} (X) \ge 1$.
\end{Lem}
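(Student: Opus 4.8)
The plan is to assume $\lct_{\msp}(X) < 1$ and derive a contradiction. By Remark \ref{rem:covex} there is then an irreducible effective $\mbQ$-divisor $D \sim_{\mbQ} -K_X$ with $(X,D)$ not log canonical at $\msp$. Since $(X, \tfrac{1}{c_1}S_1)$ is log canonical at $\msp$ by (2) and $\tfrac{1}{c_1}S_1 \sim_{\mbQ} -K_X$ by (1), the irreducible $D$ cannot equal $\tfrac{1}{c_1}S_1$; hence $S_1 \not\subseteq \Supp(D)$ and $C := S_1 \cdot D$ is a genuine effective $1$-cycle. First I would pass to the index one cover $\rho_{\msp}\colon U' \to U$ of a neighbourhood of $\msp$, writing $\check D = \rho_{\msp}^* D$, $\check{S_i} = \rho_{\msp}^* S_i$ and $\check C = \rho_{\msp}^* C$; as $\rho_{\msp}$ is a cyclic quotient which is \'etale in codimension $1$, log canonicity is preserved, so $(U', \check D)$ is not log canonical at the smooth point $\check\msp$.

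Next I would convert this into a lower bound for the multiplicity of $\check C$. Arranging (as condition (2) is designed to guarantee) that $\check{S_1}$ is normal, indeed smooth, at $\check\msp$ and that $\check D$ does not contain $\check{S_1}$, the pair $(U', \check{S_1} + \check D)$ is still not log canonical at $\check\msp$, so inversion of adjunction shows $(\check{S_1}, \check D|_{\check{S_1}})$ is not log canonical at $\check\msp$. Because $\check{S_1}$ is a smooth surface, this forces
\[
\mult_{\check\msp}(\check C) = \mult_{\check\msp}(\check D|_{\check{S_1}}) > 1.
\]

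The heart of the argument is an opposing upper bound for $\mult_{\check\msp}(\check C)$. Set $\Gamma := S_1 \cdot S_2$, the irreducible reduced curve of (3), so that $\check\Gamma = \rho_{\msp}^*\Gamma$, and decompose $C = a\Gamma + C'$ with $a = \ord_{\Gamma}(C) \ge 0$ and $C'$ effective not containing $\Gamma$. Intersecting against the nef and big class $-K_X$ and using $C \cdot (-K_X) = c_1(-K_X)^3$ and $\Gamma\cdot(-K_X) = c_1 c_2 (-K_X)^3$ yields $a \le 1/c_2$. Moreover $C'$ meets $S_2$ properly near $\msp$ (since $C' \subseteq S_1$ contains no component of $S_1 \cap S_2 = \Gamma$ through $\msp$), and $C' \cdot S_2 = c_1 c_2 (-K_X)^3(1 - a c_2)$. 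Since the degree $r$ cover multiplies the local intersection number at $\check\msp$ by $r$ and $\mult_{\check\msp}(\check{S_2}) \ge 1$, this gives
\[
\mult_{\check\msp}(\check{C'}) \le (\check{C'} \cdot \check{S_2})_{\check\msp} = r\,(C' \cdot S_2)_{\msp} \le r c_1 c_2 (-K_X)^3 (1 - a c_2).
\]

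Finally, combining these with $\mult_{\check\msp}(\check\Gamma) \le c_2$ from (3) and $r c_1 c_2 (-K_X)^3 \le 1$ from (4),
\[
\mult_{\check\msp}(\check C) = a\,\mult_{\check\msp}(\check\Gamma) + \mult_{\check\msp}(\check{C'}) \le a c_2 + r c_1 c_2 (-K_X)^3 (1 - a c_2) \le a c_2 + (1 - a c_2) = 1,
\]
contradicting $\mult_{\check\msp}(\check C) > 1$; hence $\lct_{\msp}(X) \ge 1$. I expect the main obstacle to be the passage in the second paragraph: transferring the failure of log canonicity at the quotient singularity $\msp$ to the smooth index one cover and applying inversion of adjunction across $\check{S_1}$, which hinges on $S_1$ being normal at $\msp$ --- precisely what condition (2) must be read to supply --- together with the bookkeeping of the factor $r$ relating local intersection numbers upstairs and downstairs. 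The only remaining care is properness of the intersections used, namely that $D \not\subseteq \Supp(S_1)$ (producing the cycle $C$) and that, after splitting off its multiple of $\Gamma$, the residual cycle $C'$ meets $S_2$ properly at $\msp$.
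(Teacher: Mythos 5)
The paper itself contains no proof of this statement: it is imported from \cite[Lemma 2.5]{KOW}, and the only original content here is the Remark immediately after it, which stresses that hypothesis (2) is a weakening of the hypothesis in \cite{KOW} and that, in the proof given there, (2) is used \emph{only} through the fact that $(V,\frac{1}{c_1}S_1)$ is log canonical at $\msp$. Measured against that intended argument, your overall architecture is the right one --- produce an irreducible non-lc divisor $D$, use (2) to rule out $\Supp(D)=S_1$, form $C=D\cdot S_1$, split $C=a\Gamma+C'$, bound $a$ by a degree computation, bound $\mult_{\check{\msp}}(\check{C}')$ by the local intersection number with $\check{S}_2$ carried through the degree-$r$ cover, and combine with (3) and (4) --- and your closing chain of inequalities is correct.

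The genuine gap is in your second paragraph. Condition (2) does \emph{not} guarantee that $\check{S}_1$ is normal, let alone smooth, at $\check{\msp}$: log canonicity of the scaled pair $(V,\frac{1}{c_1}S_1)$ is much weaker (already for $c_1=1$, a reduced $S_1$ with normal-crossing self-intersection along a curve through $\msp$ gives an lc pair and is not even normal there; for $c_1\ge 2$ far worse is allowed), and the paper's Remark makes explicit that the only role of (2) is to force $\Supp(D)\ne S_1$, since an irreducible $D\sim_{\mbQ}\frac{1}{c_1}S_1$ with $\Supp(D)=S_1$ would equal $\frac{1}{c_1}S_1$. So your inversion-of-adjunction step rests on a hypothesis you do not have. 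Fortunately the conclusion you extract from it, $\mult_{\check{\msp}}(\check{C})>1$, needs no adjunction at all: by \cite[8.10 Lemma]{Kol} (quoted in the paper), non-log-canonicity of $(U',\check{D})$ at the smooth point $\check{\msp}$ gives $\mult_{\check{\msp}}(\check{D})>1$, and since $\check{\msp}\in\check{\Gamma}\subset\check{S}_1$ by (3), the elementary estimate $\mult_{\check{\msp}}(\check{D}\cdot\check{S}_1)\ge\mult_{\check{\msp}}(\check{D})\,\mult_{\check{\msp}}(\check{S}_1)\ge\mult_{\check{\msp}}(\check{D})$ for a proper intersection of divisors at a smooth point already yields it; with that substitution your argument closes. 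One further point you should make explicit: your identities $\Gamma\cdot(-K_X)=c_1c_2(-K_X)^3$ and ``$C'$ has no component inside $S_2$'' read condition (3) globally, i.e.\ as saying that the scheme $S_1\cap S_2$ itself is the single irreducible reduced curve $\Gamma$, whereas the literal statement concerns only a neighbourhood of $\msp$; under the purely local reading the component of $S_1\cap S_2$ through $\msp$ may have arbitrarily small degree, and then the bound $a\le 1/c_2$ and your estimate on $C'\cdot S_2$ both collapse. The global reading is the one under which the lemma is actually applied in the paper (there $S_1\cdot S_2=L_{xy}$ globally), so this is an issue of interpreting the hypothesis rather than of your inequalities, but it is exactly where the proof would break if left unsaid.
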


\begin{Rem}
The assumption (2) in Lemma \ref{lem:exclL} is different from the following assumption
\begin{enumerate}
\item[(2)] If $r = 1$, then $\mult_{\msp} (S_1) \le c_1$, and if $r > 1$, then $\ord_{F} (S_1) \le c_1/r$, where $F$ is the exceptional divisor of the Kawamata blowup of $V$ at $\msp$.
\end{enumerate}
in \cite[Lemma 2.5]{KOW}.
The latter implies the former, but not vice versa.
However, in the proof of the statement $\lct_{\msp} (X) \ge 1$ in \cite{KOW}, the assumption (2) is used to show that $(X, \frac{1}{c_1} S_1)$ is log canonical.
The same remark applies to Lemma \ref{lem:exclG}(2) below.
\end{Rem}

In the rest of this subsection, let $X$ be a Fano $3$-fold with $\Cl (X) \cong \mbZ$ and we assume that $\Cl (X)$ is generated by $A := -K_X$.

\begin{Lem}[{\cite[Lemma 2.6]{KOW}}] \label{lem:exclG}
Let $\msp \in X$ be a nonsingular point.
Suppose that one of the following conditions is satisfied.
\begin{enumerate}
\item 
There is a $\msp$-isolating class $lA$ and distinct prime divisors $S_1 \sim_{\mbQ} c_1 A$, $S_2 \sim_{\mbQ} c_2 A$ containing $\msp$ such that $\max \{c_1,c_2\} l (A^3) \le 1$. 
\item There is a $\msp$-isolating class $l A$ and a prime divisor $S \sim_{\mbQ} c A$ containing $\msp$ such that $(X, \frac{1}{c} S)$ is log canonical at $ \msp$ and $c l (A^3) \le 1$.
\end{enumerate}
Then $\lct_{\msp} (X) \ge 1$.
\end{Lem}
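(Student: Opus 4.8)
The plan is to argue by contradiction and reduce the statement to a single local-to-global intersection estimate powered by the isolating class. Suppose that $\lct_{\msp}(X) < 1$. By Remark~\ref{rem:covex} there is an irreducible effective $\mbQ$-divisor $D \sim_{\mbQ} A$ such that $(X,D)$ is not log canonical at $\msp$. Since $\msp$ is a nonsingular point, the inequality $1/\mult_{\msp}(D) \le \lct_{\msp}(X,D)$ recalled above (\cite[8.10~Lemma]{Kol}) forces $\mult_{\msp}(D) > 1$. The whole argument aims to contradict this by producing a curve through $\msp$ whose multiplicity there is at most $1$.

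First I would extract a test surface from the isolating class. By definition there is $k > 0$ such that $\msp$ is an isolated component of $\Bs|\mcI_{\msp}^k(klA)|$. Let $T$ be a general member of this linear system; then $\mult_{\msp}(T) \ge k$, and since $\msp$ is isolated in the base locus, no curve through $\msp$ lies in $\Bs|\mcI_{\msp}^k(klA)|$, so $T$ meets every such curve properly. Next I would build the intersection curve, and this is the only place the two cases differ. In case (1), as $D$ is irreducible while $S_1, S_2$ are distinct prime divisors, $\Supp D$ coincides with at most one of $\Supp S_1, \Supp S_2$; choosing the index $i$ with $\Supp S_i \ne \Supp D$, the product $C := D \cdot S_i$ is a well-defined effective $1$-cycle numerically equivalent to $c_i A^2$. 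In case (2), if $\Supp D = \Supp S$ then $D = \tfrac{1}{c}S$ as $\mbQ$-divisors (both irreducible with the same support, and $\Cl(X) = \mbZ A$), so $(X,D)$ would be log canonical at $\msp$ by hypothesis, a contradiction; hence $\Supp D \ne \Supp S$ and $C := D \cdot S \equiv c A^2$. In either case $S_i$ (resp.\ $S$) passes through $\msp$, so $\mult_{\msp}(C) \ge \mult_{\msp}(D)\,\mult_{\msp}(S_i) \ge \mult_{\msp}(D) > 1$.

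Finally I would run the intersection estimate. Write $C = C' + C''$, where $C'$ collects the components through $\msp$ and $C''$ the rest, so that $\mult_{\msp}(C) = \mult_{\msp}(C')$. Because $T$ meets the components of $C'$ properly and $\mult_{\msp}(T) \ge k$, the contribution of $\msp$ to $T \cdot C'$ is at least $k\,\mult_{\msp}(C')$; since $A$ is nef and $[C] - [C']$ is effective, one gets
\[
k\,\mult_{\msp}(C) \le (T \cdot C')_{\msp} \le T \cdot C' = klA \cdot [C'] \le klA \cdot [C] = k l c_{\ast} (A^3) \le k,
\]
where $c_{\ast} = c_i \le \max\{c_1,c_2\}$ in case (1) and $c_{\ast} = c$ in case (2), and the last inequality is the numerical hypothesis. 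Dividing by $k$ yields $\mult_{\msp}(C) \le 1$, contradicting $\mult_{\msp}(C) > 1$. Hence $\lct_{\msp}(X) \ge 1$.

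The main obstacle I anticipate is the justification of the displayed chain rather than its arithmetic: one must know that a general $T$ meets the components of $C'$ properly, which is exactly the content of the isolating hypothesis that $\msp$ is an \emph{isolated} point of the base locus, and one must invoke the multiplicity inequality $\mult_{\msp}(D \cdot S) \ge \mult_{\msp}(D)\,\mult_{\msp}(S)$ for proper intersections on a smooth $3$-fold. Conceptually, the extra data in the hypotheses---two distinct divisors in (1), and the log canonicity of $(X,\tfrac{1}{c}S)$ in (2)---serve the single purpose of guaranteeing that $C$ is a proper intersection, i.e.\ that the irreducible divisor $D$ cannot absorb the test divisor used to cut it down.
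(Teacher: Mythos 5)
Your proof is correct and follows essentially the same strategy as the paper: the paper cites this lemma from [KOW], but its own proof of the generalization (Lemma \ref{lem:criwisol}) runs exactly this argument—take an irreducible non-log-canonical $D \sim_{\mbQ} A$ via Remark \ref{rem:covex}, cut with the auxiliary prime divisor (using distinctness in (1), resp.\ log canonicity of $(X,\frac{1}{c}S)$ in (2), to ensure a proper intersection), and play the multiplicity bound $\mult_{\msp}(D\cdot S) > 1$ against the degree bound supplied by a general member of $|\mcI_{\msp}^k(klA)|$. Your version is precisely the specialization of that argument to the case where the isolating set is the point $\msp$ itself.
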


The following is a generalization of Lemma \ref{lem:exclG}(2).

\begin{Lem} \label{lem:criwisol}
Let $\msp \in X$ be a nonsingular point.
Suppose that there are a prime divisor $S \sim_{\mbQ} c A$ passing through $\msp$ and a $(\msp, \Gamma)$-isolating class $l A$, where $\Gamma$ is an irreducible and reduced curve with $d := (A \cdot \Gamma)$ and $m := \mult_{\msp} (\Gamma)$, satisfying the following properties.
\begin{enumerate}
\item $(X, \frac{1}{c} S)$ is log canonical at $\msp$.
\item One of the following is satisfied.
\begin{enumerate}
\item $l d \le m$ and $c m (A^3) \le d$.
\item $l d > m$ and $c l (A^3) \le 1$.
\end{enumerate}
\end{enumerate}
Then $\lct_{\msp} (X) \ge 1$.
\end{Lem}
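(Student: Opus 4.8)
The plan is to argue by contradiction, mimicking the proof of Lemma \ref{lem:exclG}(2) but keeping track of the part of the intersection cycle supported on $\Gamma$. Suppose $\lct_{\msp}(X) < 1$. By Remark \ref{rem:covex} there is an irreducible effective $\mbQ$-divisor $D \sim_{\mbQ} A$ such that $(X, D)$ is not log canonical at $\msp$; in particular $\mult_{\msp}(D) > 1$ by the inequality $1/\mult_{\msp}(D) \le \lct_{\msp}(X,D)$. Condition (1) forces $D \ne S$: otherwise, since $\Cl(X) \cong \mbZ$ is generated by $A$, we would have $c = 1$ and $(X, D) = (X, \tfrac{1}{c} S)$ would be log canonical at $\msp$, a contradiction. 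As $D$ and $S$ are then distinct prime divisors, they meet properly and $C := S \cdot D$ is an effective $1$-cycle through $\msp$ with $(A \cdot C) = c(A^3)$ and $\mult_{\msp}(C) \ge \mult_{\msp}(S)\,\mult_{\msp}(D) > 1$.

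Next I would exploit the $(\msp,\Gamma)$-isolating class to bound $\mult_{\msp}(C)$ from above. Write $C = \gamma\Gamma + C_1$ with $\gamma \ge 0$ and $\Gamma \not\subseteq \Supp(C_1)$; comparing $A$-degrees gives $(A \cdot C_1) = c(A^3) - \gamma d \ge 0$, hence $\gamma \le c(A^3)/d$. Choose $k > 0$ so that $\Bs|\mcI_{\msp}^k(klA)|$ is a $(\msp,\Gamma)$-isolating set and let $M \in |\mcI_{\msp}^k(klA)|$ be a general member, so that $M \sim klA$ and $\mult_{\msp}(M) \ge k$. Since $\Gamma$ is the only irreducible curve through $\msp$ contained in the base locus, no component of $C_1$ passing through $\msp$ is a base component, so a general $M$ contains none of them and meets them properly at $\msp$; reading the intersection number off from the local multiplicities at $\msp$ then yields
\[
kl\bigl(c(A^3)-\gamma d\bigr)=(M\cdot C_1)\ge \mult_{\msp}(M)\,\mult_{\msp}(C_1)\ge k\,\mult_{\msp}(C_1).
\]
Dividing by $k$ gives $\mult_{\msp}(C_1) \le lc(A^3) - \gamma l d$, whence
\[
\mult_{\msp}(C)=\gamma m+\mult_{\msp}(C_1)\le lc(A^3)+\gamma(m-ld).
\]

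Finally I would insert the numerical hypotheses of (2). In case (b) we have $m - ld < 0$ and $\gamma \ge 0$, so the last bound gives $\mult_{\msp}(C) \le lc(A^3) \le 1$. In case (a) we have $m - ld \ge 0$, so the bound is nondecreasing in $\gamma$; substituting the extreme value $\gamma = c(A^3)/d$ produces $\mult_{\msp}(C) \le cm(A^3)/d \le 1$. In either case $\mult_{\msp}(C) \le 1$, contradicting $\mult_{\msp}(C) > 1$ from the first paragraph. Therefore $\lct_{\msp}(X) \ge 1$.

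I expect the main obstacle to be the justification of the local intersection inequality in the second display: one must verify that a general member $M$ of $|\mcI_{\msp}^k(klA)|$ genuinely avoids every component of $C_1$ meeting $\msp$, which is exactly what the $(\msp,\Gamma)$-isolating condition provides once one uses that $\Gamma$ is irreducible, and that the remaining (nonlocal and off-$\msp$) contributions to $(M\cdot C_1)$ are nonnegative so the global intersection number dominates $\mult_{\msp}(M)\,\mult_{\msp}(C_1)$. The bookkeeping of the coefficient $\gamma$ along $\Gamma$ together with the case split according to the sign of $m-ld$ is the new ingredient compared with Lemma \ref{lem:exclG}(2), where $\Gamma$ degenerates to the point $\msp$ and only the estimate $\mult_{\msp}(C)\le lc(A^3)$ is required.
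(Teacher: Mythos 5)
Your proof is correct and follows essentially the same route as the paper's: assume $\lct_{\msp}(X) < 1$, decompose $D \cdot S = \gamma\Gamma + C_1$, bound $\gamma$ by intersecting with $A$, use a general member $M \in |\mcI_{\msp}^k(klA)|$ to bound the multiplicity of the residual part at $\msp$, and conclude by the same two-case analysis on the sign of $m - ld$ (your reformulation via $\mult_{\msp}(D\cdot S) \le 1$ is algebraically identical to the paper's derivation of $cm(A^3) > d$ or $cl(A^3) > 1$). The only difference is presentational: the paper treats the components of the residual cycle lying in the base locus away from $\msp$ explicitly, by splitting $\Delta = \Delta' + \Xi$ and using only nonnegativity of $A \cdot \Delta'$, which is precisely the point you correctly identify and resolve in your final paragraph, so there is no gap.
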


\begin{proof}
Suppose that $\lct_{\msp} (X) < 1$.
Then there is an irreducible effective $\mbQ$-divisor $D \sim_{\mbQ} A$ such that $(X, D)$ is not log canonical at $\msp$.
By (1), $\Supp (D) \ne S$ and we can write 
\[
D \cdot S = \gamma \Gamma + \Delta,
\]
where $\gamma \ge 0$ and $\Delta$ is an effective $1$-cycle such that $\Gamma \not\subset \Supp (\Delta)$.
We have
\begin{equation} \label{eq:criwisol1}
c (A^3) = A \cdot D \cdot S = \gamma (A \cdot \Gamma) + (A \cdot \Delta) \ge d \gamma.
\end{equation}
For a large $k > 0$, we have $\Bs |\mcI_{\msp}^k (k l A)| = \Gamma \cup \Theta$, where $\Theta \subset X$ is a closed subset disjoint from $\msp$, since $l A$ is a $(\msp,\Gamma)$-isolating class.
Write $\Delta = \Delta' + \Xi$, where $\Delta'$ and $\Xi$ are effective $1$-cycles such that $\Supp (\Delta') \subset \Supp (\Theta)$ and no component of $\Supp (\Xi)$ is contained in $\Theta$.
Then we can take $M \in |\mcI^k_{\msp} (k l A)|$ such that $M$ does not contain any component of $\Supp (\Xi)$.
Note that 
\[
\mult_{\msp} (\Xi) = \mult_{\msp} (D \cdot S) - \gamma \mult_{\msp} (\Gamma) > 1 - m \gamma.
\]
We have
\begin{equation} \label{eq:criwisol2}
k l (c (A^3) - d \gamma) = M \cdot (D \cdot S - \gamma \Gamma) = M \cdot \Delta \ge M \cdot \Xi > k (1 - m \gamma).
\end{equation}

Suppose that (2-a) is satisfied.
By taking into account the inequality $m - ld \ge 0$, the combination of \eqref{eq:criwisol1} and \eqref{eq:criwisol2} implies
\[
(m-ld) \frac{c}{d} (A^3) \ge (m-ld) \gamma > 1 - c l (A^3),
\]
which is equivalent to $c m (A^3) > d$.
This is a contradiction.

Suppose that (2-b) is satisfied.
By taking into account the inequality $l d - m > 0$, the inequality \eqref{eq:criwisol2} implies
\[
c l (A^3) - 1 > (l d - m) \gamma \ge 0.
\]
This is a contradition.
Therefore $\lct_{\msp} (X) \ge 1$.
\end{proof}

\begin{Lem}[{\cite[Lemma 2.8]{KOW}}] \label{lem:singptNE}
Let $\msp \in X$ be a terminal quotient singular point and $\varphi \colon Y \to X$ the Kawamata blowup at $\msp$.
Suppose that $(-K_Y)^2 \notin \Int \bNE (Y)$ and there exists a prime divisor $S$ on $X$ such that $\tilde{S} \sim_{\mbQ} - m K_Y$ for some $m > 0$, where $\tilde{S}$ is the proper transform of $S$ on $Y$.
Then $\lct_{\msp} (X) \ge 1$.
\end{Lem}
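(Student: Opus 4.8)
The plan is to argue by contradiction, combining a numerical estimate on the Kawamata blowup $Y$ (where the hypothesis on $(-K_Y)^2$ enters through the Mori cone) with a local log canonicity argument on the exceptional divisor.

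First I would suppose $\lct_{\msp}(X) < 1$. By Remark \ref{rem:covex} there is an irreducible effective $\mbQ$-divisor $D \sim_{\mbQ} -K_X$ such that $(X,D)$ is not log canonical at $\msp$. Write the type of the singularity as $\frac{1}{r}(1,b,r-b)$, so that $E \cong \mbP(1,b,r-b)$, $\wprod(\msp) = b(r-b)$, and $(E^3) = r^2/\wprod(\msp)$. Let $\tilde{D} = \varphi^* (-K_X) - \tfrac{a}{r} E$ be the proper transform, where $\tfrac{a}{r} = \ord_E(D) \ge 0$; equivalently $\tilde{D} \sim_{\mbQ} -K_Y + \tfrac{1-a}{r} E$. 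Using $\varphi^*(-K_X)^2 \cdot E = \varphi^*(-K_X) \cdot E^2 = 0$ I record the two intersection numbers I will need, namely $(-K_Y)^2 \cdot \varphi^*(-K_X) = (-K_X)^3$ and $(-K_Y)^2 \cdot E = \tfrac{1}{r^2}(E^3) = 1/\wprod(\msp)$.

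Next I would extract the numerical consequence of the hypothesis. Since $\Cl(X) \cong \mbZ$ and $\varphi$ is the blowup of a point, $\rho(Y) = 2$, so $\bNE(Y)$ and $\operatorname{Nef}(Y)$ are $2$-dimensional dual cones. The class $\varphi^*(-K_X)$ is nef but not ample (it is trivial on the $\varphi$-exceptional ray), hence lies on the boundary of $\operatorname{Nef}(Y)$; because $(-K_Y)^2 \cdot \varphi^*(-K_X) = (-K_X)^3 > 0$, the dual characterization of $\Int \bNE(Y)$ shows that the hypothesis $(-K_Y)^2 \notin \Int \bNE(Y)$ forces the other nef boundary ray to be $-K_Y$ itself with $(-K_Y)^3 \le 0$. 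Assuming $\tilde{D} \ne \tilde{S}$ (the case $\tilde{D} = \tilde{S}$ forces $m = 1$ and $a = 1$ directly), the cycle $\tilde{D} \cdot \tilde{S}$ is effective, so intersecting it with the nef divisor $-K_Y$ and using $\tilde{S} \sim_{\mbQ} -m K_Y$ gives
\[
0 \le (-K_Y) \cdot \tilde{D} \cdot \tilde{S} = m(-K_Y)^3 + \frac{m(1-a)}{r\,\wprod(\msp)}.
\]
Since $(-K_Y)^3 \le 0$, this yields $1 - a \ge 0$, i.e. $\ord_E(D) = \tfrac{a}{r} \le \tfrac{1}{r}$.

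Finally I would convert this multiplicity bound into a contradiction. From $K_Y + \tilde{D} + \tfrac{a-1}{r} E = \varphi^*(K_X + D)$ with $a \le 1$, the coefficient of $E$ is nonpositive, so $(X,D)$ is log canonical at $\msp$ as soon as $(Y, \tilde{D})$ is log canonical in a neighbourhood of $E$. To see the latter I restrict to $E$: since $\varphi^*(-K_X)|_E \sim_{\mbQ} 0$ and $E|_E \sim_{\mbQ} -r\,\mcO_E(1)$ (matching $(E|_E)^2 = (E^3)$), one gets $\tilde{D}|_E \sim_{\mbQ} a\,\mcO_E(1)$, a divisor of degree $a \le 1$ on $E \cong \mbP(1,b,r-b)$. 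Inversion of adjunction then reduces log canonicity of $(Y,\tilde{D})$ along $E$ to log canonicity of $(E, \tilde{D}|_E + \Diff_E(0))$, which holds because a boundary of class $\le \mcO_E(1)$ on the weighted plane has all multiplicities bounded by $1$; this contradicts $(X,D)$ being non log canonical at $\msp$, and hence $\lct_{\msp}(X) \ge 1$. The main obstacle is precisely this last step: the numerical estimate only controls $\ord_E(D)$, and ruling out a deeper non log canonical centre inside $E$ requires careful bookkeeping of $\tilde{D}|_E$ together with the orbifold different $\Diff_E(0)$ on the singular strata of $\mbP(1,b,r-b)$, where small coordinate-curve degrees could a priori inflate multiplicities past $1$.
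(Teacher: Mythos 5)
Your strategy inverts the actual argument, and the step on which everything else rests does not follow from the hypotheses. You claim that $(-K_Y)^2 \notin \Int \bNE (Y)$ together with $(-K_Y)^2 \cdot \varphi^*(-K_X) = (-K_X)^3 > 0$ ``forces the other nef boundary ray to be $-K_Y$ itself with $(-K_Y)^3 \le 0$.'' Duality gives only the existence of \emph{some} nonzero nef class $N_0$ with $(-K_Y)^2 \cdot N_0 \le 0$; nothing identifies $N_0$ with $-K_Y$, and the lemma's hypotheses do not make $-K_Y$ nef (the assumption $\tilde{S} \sim_{\mbQ} -mK_Y$ with $\tilde{S}$ prime only makes $-K_Y$ \emph{effective}; in this paper the lemma is applied precisely at singular points where no Sarkisov link is initiated, so nefness of $-K_Y$ cannot be smuggled in from the geometry). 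Nor is $(-K_Y)^3 \le 0$ implied: $(-K_Y)^3 = (-K_X)^3 - 1/(r \, \wprod (\msp))$ is a fixed number, and writing $(-K_Y)^2 = u[\ell] + v[C]$ in terms of the two boundary rays of $\bNE(Y)$ (here $\ell$ is a curve contracted by $\varphi$), the cone hypothesis is exactly $u \le 0 < v$, which is compatible with $(-K_Y)^3 = u(-K_Y \cdot \ell) + v(-K_Y \cdot C) > 0$ whenever $-K_Y \cdot C > 0$. Without both ``$-K_Y$ nef'' and ``$(-K_Y)^3 \le 0$'', your inequality $0 \le (-K_Y)\cdot \tilde{D} \cdot \tilde{S}$ and the resulting bound $a \le 1$ are unjustified, and the remainder of the proof has nothing to stand on. Your closing step is also only a sketch: that $(E, \tilde{D}|_E + \Diff_E(0))$ is log canonical for every effective $\mbQ$-divisor of $\mcO_E(1)$-degree at most $1$ on $E \cong \mbP(1,b,r-b)$ is a true statement (it amounts to $\lct(\mbP(1,b,r-b)) = 1/(1+r)$, a toric alpha-invariant computation), but ``all multiplicities bounded by $1$'' is not a proof --- at the two quotient points of $E$ a multiplicity bound is not the right criterion --- and you flag this yourself as unresolved.

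For comparison, the proof in \cite[Lemma 2.8]{KOW} (the paper quotes the lemma and does not reprove it) runs in the opposite direction and needs neither claim. If $\lct_{\msp}(X) < 1$, choose an irreducible $D \sim_{\mbQ} -K_X$ that is not log canonical at $\msp$; then $(X,D)$ is in particular non-canonical at $\msp$, and Kawamata's theorem on divisorial contractions centered at a terminal quotient point --- the very reason the Kawamata blowup enters --- yields the key estimate $\ord_E(D) > 1/r$, i.e.\ $a > 1$ in your notation. If $\Supp(D) = S$ this is already a contradiction, since $\tilde{S} \sim_{\mbQ} -mK_Y$ forces $\ord_E\bigl(\tfrac{1}{m}S\bigr) = 1/r$. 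Otherwise $\tilde{D}\cdot\tilde{S}$ is an effective $1$-cycle and
\[
(-K_Y)^2 \equiv \frac{1}{m}\,\tilde{D}\cdot\tilde{S} + \frac{a-1}{r}\,(-K_Y)\cdot E ,
\]
where $(-K_Y)\cdot E \equiv -\tfrac{1}{r}E^2$ is a \emph{positive} multiple of the class $[\ell]$, while the effective summand pairs positively with $\varphi^*(-K_X)$ (because $(-K_Y)^2\cdot\varphi^*(-K_X) = (-K_X)^3 > 0$ and $[\ell]\cdot\varphi^*(-K_X) = 0$) and hence has positive component along the other boundary ray. Thus $(-K_Y)^2 \in \Int\bNE(Y)$, contradicting the hypothesis. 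Your proposal is missing exactly this one external input, $\ord_E(D) > 1/r$, and the two claims you substitute for it are, respectively, a non sequitur and an unproven assertion.
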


\subsubsection{Methods for computing isolating classes}

We explain methods for computing $\msp$-isolating and $(\msp, \Gamma)$-isolating classes, which is important when we apply Lemmas \ref{lem:exclG} and \ref{lem:criwisol}.

Let $V$ be a normal projective variety embedded in a weighted projective space $\mbP = \mbP (a_0,\dots,a_n)$ with homogeneous coordinates $x_0,\dots,x_n$ with $\deg x_i = a_i$, and let $A$ be a Weil divisor on $V$ such that $\mcO_V (A) \cong \mcO_V (1)$.
We do not assume that $a_0 \le \cdots \le a_n$.

\begin{Def}
Let $\msp \in V$ be a point and $\Gamma \subset V$ a closed subset such that  $\msp \in \Gamma$.
We say that (weighted) homogeneous polynomials $g_1, \dots, g_m \in \mbC [x_0,\dots,x_n]$ are $(\msp,\Gamma)$-{\it isolating polynomials} if 
\[
(g_1 = \cdots = g_m = 0) \cap V
\]
is a $(\msp,\Gamma)$-isolating set.
We say that $g_1, \dots, g_m$ are $\msp$-{\it isolating polynomials} if they are $(\msp,\msp)$-isolating polynomials.
\end{Def}

\begin{Lem} \label{lem:isolsetclass}
Let $\msp \in V$ be a point and $\Gamma \subset V$ a closed subset such that $\msp \in \Gamma$.
If $g_1, \dots, g_m \in \mbC [x_0,\dots,x_n]$ are $(\msp,\Gamma)$-isolating homogeneous polynomials, then $l A$ is a $(\msp, \Gamma)$-isolating class, where 
\[
l := \max \{\, \deg g_i \mid 1 \le i \le m, \}.
\]
In particular, if $\Gamma = \{\msp\}$ in a neighborhood of $\msp$, then $l A$ is a $\msp$-isolating class.
\end{Lem}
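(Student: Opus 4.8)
The plan is to exhibit, for a well-chosen integer $k$, an explicit family of sections of $\mcO_V(kl)$ that vanish to order at least $k$ at $\msp$ and whose common zero locus is contained in $\Xi := (g_1 = \cdots = g_m = 0) \cap V$. Since the base locus $\Bs |\mcI_{\msp}^k(klA)|$ is the intersection of the zero loci of \emph{all} sections of $\mcI_{\msp}^k(klA)$, it is contained in the zero locus of any single section we produce; so it will suffice to show that for every point $q \in V \setminus \Xi$ there is one section in our family that does not vanish at $q$. Together with the fact that every section of $\mcI_{\msp}^k(klA)$ vanishes at $\msp$, this will give $\msp \in \Bs |\mcI_{\msp}^k(klA)| \subseteq \Xi$.

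The sections I would use are $g_i^k\, x^\alpha$, where $x^\alpha$ runs over the (weighted) monomials in $x_0,\dots,x_n$ of degree $k(l - \deg g_i)$. Because $\msp \in \Xi$, every $g_i$ vanishes at $\msp$, so $g_i^k \in \mcI_{\msp}^k$ and hence $g_i^k x^\alpha \in \mcI_{\msp}^k$; each such polynomial is homogeneous of degree $kl$, so it is a genuine section in $\mcI_{\msp}^k(klA)$. Now fix $q \in V \setminus \Xi$. Then $g_{i_0}(q) \ne 0$ for some index $i_0$, and since $q \in \mbP$ there is a coordinate $x_j$ with $x_j(q) \ne 0$; the section I want to use is $g_{i_0}^k\, x_j^{\,t}$ with $t a_j = k(l - \deg g_{i_0})$, which is nonzero at $q$ and therefore removes $q$ from the base locus.

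The one genuine technical point — and the main obstacle — is guaranteeing that a monomial $x_j^{\,t}$ of the \emph{exact} degree $k(l - \deg g_{i_0})$ exists for whichever coordinate $x_j$ is nonzero at $q$; in a weighted projective space this amounts to the divisibility $a_j \mid k(l - \deg g_{i_0})$, which need not hold for $k = 1$. I would circumvent this uniformly by taking $k$ to be any multiple of $\lcm(a_0,\dots,a_n)$. Note $l - \deg g_{i_0} \ge 0$ since $l = \max_i \deg g_i$, and with this choice $a_j \mid k$ for every $j$, hence $a_j \mid k(l - \deg g_{i_0})$; thus $t = k(l - \deg g_{i_0})/a_j$ is a nonnegative integer and $x_j^{\,t}$ is an admissible monomial with $x_j^{\,t}(q) \ne 0$. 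This establishes $\msp \in \Bs |\mcI_{\msp}^k(klA)| \subseteq \Xi$.

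Finally I would transfer the isolating property from $\Xi$ to $B := \Bs |\mcI_{\msp}^k(klA)|$. Let $C$ be an irreducible component of $B$ passing through $\msp$. Since $C \subseteq B \subseteq \Xi$ and $C$ is irreducible, $C$ lies in a single irreducible component $\Xi_0$ of $\Xi$; as $\msp \in C \subseteq \Xi_0$, the component $\Xi_0$ passes through $\msp$, so the hypothesis that $\Xi$ is a $(\msp,\Gamma)$-isolating set gives $\Xi_0 \subseteq \Gamma$, whence $C \subseteq \Gamma$. Thus $B$ is a $(\msp,\Gamma)$-isolating set and $lA$ is a $(\msp,\Gamma)$-isolating class. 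The last assertion is the special case $\Gamma = \{\msp\}$ (in a neighborhood of $\msp$), for which the $(\msp,\Gamma)$-isolating notion reduces to the usual $\msp$-isolating one.
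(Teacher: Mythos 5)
Your proof is correct, and its skeleton matches the paper's: build explicit sections of $\mcI_{\msp}^k(klA)$ out of powers of the $g_i$, conclude that $\Bs |\mcI_{\msp}^k(klA)|$ is trapped inside $\Xi := (g_1 = \cdots = g_m = 0) \cap V$, and transfer the isolating property from $\Xi$ to the base locus. The one genuine difference is the device used to equalize degrees. The paper takes $d$ to be a common multiple of $\deg g_1, \dots, \deg g_m$ and raises each $g_i$ to the \emph{different} power $k_i = d/\deg g_i$; then all $g_i^{k_i}$ have the same degree $d = kl$ with $k := d/l$, each vanishes at $\msp$ to order $k_i \ge k$, and no padding is needed -- in particular the weights $a_0,\dots,a_n$ of the ambient space never enter. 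You instead keep a \emph{single} exponent $k$ and pad with monomials $x^{\alpha}$, which is why you must take $k$ divisible by $\lcm(a_0,\dots,a_n)$ so that a pure power $x_j^{t}$ of the exact complementary degree $k(l - \deg g_{i_0})$ exists for whichever coordinate is nonzero at a given point. Both devices resolve the same divisibility obstacle; the paper's is a bit more economical and weight-independent, while your write-up has the merit of making explicit the final step (an irreducible component of the base locus through $\msp$ lies in a single component of $\Xi$, which then contains $\msp$ and hence lies in $\Gamma$), which the paper leaves implicit.
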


\begin{proof}
We may assume that $\deg g_m = \max \{\, \deg g_i \mid 1 \le i \le m \,\}$.
Let $d > 0$ be an integer such that $\deg g_i \mid d$ for any $i$ and set $k_i = d/\deg g_i$.
Let $\mcH \subset |d A|$ be the linear system spanned by
\[
g_1^{k_1}, \dots, g_m^{k_m}.
\]
Note that $d = k_m \deg g_m = k l$, where we set $k := k_m$.
Note also that $g_i^{k_i}$ vanishes along $\msp$ with multiplicity at least $k_i \ge k$, so that $\mcH \subset |\mcI^k_{\msp} (k l A)|$.
Then we have $\Bs (|\mcI^k_{\msp} (k lA)|) \subset \Bs (\mcH) = \Xi$ set-theoretically.
This shows that $l A$ is an $(\msp, \Gamma)$-isolating class.
\end{proof}

\begin{Lem}[{\cite[Lemma 3.5]{KOW}}] \label{lem:findisol}
Let $\pi \colon V \ratmap \mbP (a_0,\dots,a_m)$ be the projection by the coordinates $x_0,\dots,x_m$ and suppose that $\pi$ is a generically finite map onto the image.
Let $\Exc (\pi) \subset V$ denotes the locus contracted by $\pi$.
If $\msp \notin H_{x_j} \cup \Exc (\pi)$, where $0 \le j \le m$, then $lA$ is a $\msp$-isolating class, where
\[
l := \max_{0 \le k \le m, k \ne j} \{ \operatorname{lcm} (a_j,a_k)\}.
\]
\end{Lem}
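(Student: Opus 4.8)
The plan is to exhibit explicit $\msp$-isolating polynomials of controlled degree and then invoke Lemma \ref{lem:isolsetclass}. Since $\msp \notin H_{x_j}$ we have $x_j(\msp) \ne 0$, so for each index $k$ with $0 \le k \le m$ and $k \ne j$ I would set $d_k := \lcm(a_j,a_k)$ and define the weighted homogeneous polynomial
\[
g_k := x_k^{d_k/a_k} - \lambda_k\, x_j^{d_k/a_j}, \qquad \lambda_k := \frac{x_k(\msp)^{d_k/a_k}}{x_j(\msp)^{d_k/a_j}},
\]
which has degree exactly $d_k = \lcm(a_j,a_k)$ and vanishes at $\msp$ by the choice of $\lambda_k$ (the denominator being nonzero). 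Each $g_k$ has degree at most $l = \max_{k \ne j} \lcm(a_j,a_k)$, so once I show that the $g_k$ (for $k \ne j$) are $\msp$-isolating polynomials, Lemma \ref{lem:isolsetclass} immediately yields that $lA$ is a $\msp$-isolating class, as required.

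The heart of the argument is to prove that $\msp$ is an isolated point of the common zero locus $Z := (g_k = 0 \text{ for all } k \ne j) \cap V$. First I would analyze the image $\pi(Z \cap \{x_j \ne 0\})$. Given $\msq \in Z$ with $x_j(\msq) \ne 0$, I can rescale the homogeneous coordinates by a suitable $t \in \mbC^*$ (namely with $t^{a_j} = x_j(\msp)/x_j(\msq)$) so as to arrange $x_j(\msq) = x_j(\msp)$ for a chosen representative; since each $g_k$ is homogeneous of degree $d_k$, this rescaling preserves the conditions $g_k(\msq) = 0$. Then $g_k(\msq) = 0$ forces $x_k(\msq)^{d_k/a_k} = x_k(\msp)^{d_k/a_k}$, so that each $x_k(\msq)$ takes only finitely many values; hence the coordinate vector $(x_0(\msq), \dots, x_m(\msq))$ ranges over a finite set, and therefore $\pi(Z \cap \{x_j \ne 0\})$ is a finite subset of $\mbP(a_0,\dots,a_m)$.

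Finally I would localize near $\msp$. Since $H_{x_j}$ is closed and $\msp \notin H_{x_j}$, there is a neighborhood $U$ of $\msp$ with $U \subset \{x_j \ne 0\}$; since $\msp \notin \Exc(\pi)$ and $\pi$ is generically finite onto its image, after shrinking $U$ the restriction $\pi|_U$ is quasi-finite. Then $Z \cap U$ is contained in the preimage under $\pi|_U$ of the finite set above, hence is itself finite, so $\msp$ is an isolated point of $Z$. This shows the $g_k$ are $\msp$-isolating polynomials and completes the proof. The step requiring the most care is the passage through the chart $\{x_j \ne 0\}$ of the weighted projective space: the root-of-unity indeterminacy in recovering $x_k(\msq)$ from $x_k(\msq)^{d_k/a_k}$ and the residual $\mu_{a_j}$-ambiguity inherent in weighted homogeneous coordinates must both be tracked, but since each only enlarges the relevant fibers by a finite factor, they do not affect the zero-dimensionality conclusion and hence do not change the value of $l$.
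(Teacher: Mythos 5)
Your proposal is correct and follows essentially the same route as the paper: you construct the same binomial polynomials $x_k^{d_k/a_k} - \lambda_k x_j^{d_k/a_j}$ (the paper's $g_k = \xi_j^{a'_k} x_k^{a'_j} - \xi_k^{a'_j} x_j^{a'_k}$, up to a nonzero scalar) of degree $\lcm(a_j,a_k)$, show their common zero locus maps to a finite set under $\pi$, and use $\msp \notin \Exc(\pi)$ together with generic finiteness to conclude that $\msp$ is isolated, then invoke Lemma \ref{lem:isolsetclass}. If anything, your treatment is slightly more careful than the paper's, which asserts the downstairs zero locus is exactly $\{\pi(\msp)\}$, whereas (as your root-of-unity bookkeeping correctly reflects) it is in general only a finite set containing $\pi(\msp)$ --- which suffices.
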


\begin{proof}
We set $\msp = (\xi_0\!:\!\cdots\!:\!\xi_n)$.
Then $\pi (\msp) = (\xi_0\!:\!\cdots\!:\!\xi_m)$ and $\xi_j \ne 0$.
For $k \ne j$, we put $a'_k = a_k/\gcd (a_j,a_k)$ and $a'_j = a_j/\gcd (a_j,a_k)$.
It is easy to see that the common zero loci of the sections in
\[
\{g_0,\dots,\hat{g}_j,\dots,g_m\}, \ \text{where $g_k = \xi_j^{a'_k} x_k^{a'_j} - \xi_k^{a'_j} x_j^{a'_k}$},
\]
is $\{\pi (\msp)\}$.
It follows that the common zero loci on $V$ of the above set is the fiber $\pi^{-1} (\pi (\msp))$ which is a finite set of points since $\pi$ is generically finite and $\msp \notin \Exc (\pi)$.
This shows that $l A$, where $l = \max \{\, \deg g_i \mid 1 \le i \le m \,\}$, isolates $\msp$. 
\end{proof}

\begin{Rem} \label{rem:findisol}
Lemma \ref{lem:findisol} can be applied for $m = n$.
In that case the projection $\pi \colon V \ratmap \mbP (a_0,\dots,a_n)$ is the identity map and the assumption that $\pi$ is a finite morphism is automatically satisfied.
\end{Rem}

\begin{Def}
For positive integers $a, b$, we define 
\[
\langle a, b \rangle := \{\, m a + n b \mid m, n \in \mbZ, m \ge 0, n \ge 0, (m, n) \ne (0,0)\,\},
\] 
which is the semigroup generated by $a, b$.
\end{Def}

\begin{Lem} \label{lem:isolstr}
Let $\msp = (\alpha_0\!:\!\alpha_1\!:\!\cdots\!:\!\alpha_n) \in V$ be a point such that $\alpha_0 \ne 0$ and $\alpha_1 \ne 0$.
For $i = 1,\dots,n$, we set
\[
l_i := 
\begin{cases}
\operatorname{lcm} \{a_0,a_1\}, & \text{if $i = 1$}, \\
a_i \min \{\, k > 0 \mid k a_i \in \langle a_0,a_1 \rangle \,\}, & \text{if $i \ge 2$ and $\alpha_i \ne 0$}, \\
a_i, & \text{if $i \ge 2$ and $\alpha_i = 0$},
\end{cases}
\]
and
\[
l := \max \{l_1,\dots,l_n\}.
\]
Then $l A$ is a $\msp$-isolating class.
\end{Lem}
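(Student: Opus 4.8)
The plan is to produce, for the given point $\msp$, an explicit collection of $\msp$-isolating homogeneous polynomials $g_1, \dots, g_n$ with $\deg g_i = l_i$, and then to invoke Lemma \ref{lem:isolsetclass}: once we know that $(g_1 = \cdots = g_n = 0) \cap V$ coincides with $\{\msp\}$ in a neighborhood of $\msp$, it follows that $lA$ with $l = \max_i \deg g_i = \max_i l_i$ is a $\msp$-isolating class. Thus the task reduces to writing down the $g_i$ and checking that they isolate $\msp$.

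For the construction I would exploit $\alpha_0 \ne 0$ and $\alpha_1 \ne 0$ to tie every coordinate to $x_0$ and $x_1$. Set $e_0 = \lcm(a_0,a_1)/a_0$ and $e_1 = \lcm(a_0,a_1)/a_1$, and put $g_1 = \alpha_1^{e_1} x_0^{e_0} - \alpha_0^{e_0} x_1^{e_1}$, which is homogeneous of degree $\lcm(a_0,a_1) = l_1$ and vanishes at $\msp$. For $i \ge 2$ with $\alpha_i = 0$, put $g_i = x_i$, of degree $a_i = l_i$. For $i \ge 2$ with $\alpha_i \ne 0$, let $k_i = \min\{\, k > 0 \mid k a_i \in \langle a_0,a_1\rangle \,\}$ (which exists since $a_0 a_i = a_i \cdot a_0 \in \langle a_0,a_1\rangle$, so $k_i \le a_0$), write $k_i a_i = m a_0 + n a_1$ with $m, n \ge 0$, and put $g_i = \alpha_0^m \alpha_1^n x_i^{k_i} - \alpha_i^{k_i} x_0^m x_1^n$, which is homogeneous of degree $k_i a_i = l_i$ and vanishes at $\msp$. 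Hence every $g_i$ has exactly the prescribed degree $l_i$.

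The heart of the argument is to check that $\msp$ is an isolated point of $Z := (g_1 = \cdots = g_n = 0)$ in $\mbP$ (which a fortiori gives isolation inside $V$). I would argue on the affine cone: since $\alpha_0 \ne 0$, any point $[q] \in Z$ close to $\msp$ admits a representative $q = (\beta_0, \dots, \beta_n)$ close to $(\alpha_0, \dots, \alpha_n)$ with $\beta_0 = \alpha_0$, obtained by rescaling $[q]$ by a parameter $t$ near $1$ satisfying $t^{a_0} = \alpha_0/\beta_0$. The equation $g_1 = 0$ then forces $\beta_1^{e_1} = \alpha_1^{e_1}$, hence $\beta_1 = \alpha_1$ by proximity; for $i \ge 2$ with $\alpha_i = 0$ we get $\beta_i = 0 = \alpha_i$; and for $i \ge 2$ with $\alpha_i \ne 0$ the equation $g_i = 0$ together with $\beta_0 = \alpha_0$, $\beta_1 = \alpha_1$ gives $\beta_i^{k_i} = \alpha_i^{k_i}$, so again $\beta_i = \alpha_i$ by proximity. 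Therefore $q = (\alpha_0, \dots, \alpha_n)$ and $[q] = \msp$, so $\msp$ is isolated in $Z$. Applying Lemma \ref{lem:isolsetclass} then yields the claim.

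The step I expect to require the most care is this isolation argument, specifically the passage through the weighted cone: one must justify that a nearby point of $Z$ can be normalized to share the $x_0$-coordinate of $\msp$ by a scaling $t$ close to $1$, and that the resulting roots-of-unity ambiguities ($\beta_1^{e_1} = \alpha_1^{e_1}$ and $\beta_i^{k_i} = \alpha_i^{k_i}$) are eliminated by the local condition that $\beta_i$ is near $\alpha_i$. The remaining bookkeeping — existence of $k_i$, the three degree computations, and the vanishing $g_i(\msp) = 0$ — is routine and I would leave it to direct verification.
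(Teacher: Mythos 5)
Your proposal is correct and follows essentially the same route as the paper: you construct the same binomial-type polynomials of degrees $l_1,\dots,l_n$ tying each coordinate to $x_0,x_1$ (the paper normalizes $\alpha_0=1$ and calls them $M_i$) and then invoke Lemma \ref{lem:isolsetclass}. The only difference is cosmetic: where the paper asserts that the common zero locus is "easily seen" to be a finite set of points, you verify the (weaker but sufficient) local statement that $\msp$ is isolated, by an explicit rescaling argument on the cone.
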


\begin{proof}
We may assume $\alpha_0 = 1$.
For $i = 1,2,\dots,n$, we set $m_i = l_i/a_i$.
For $i \ge 2$ such that $\alpha_i \ne 0$, there are positive integers $b_i, c_i$ such that $m_i a_i = a_0 b_i + a_1 c_i$.
For such $i$, we set
\[
M_i = \alpha_1^{c_i} x_i^{m_i} - \alpha_i^{m_i} x_0^{n_i} x_1^{c_i}.
\]
Then it is easy to see that the common zero locus of the elements in the set
\[
\{ \alpha_0^{m_0} x_1^{m_1} - \alpha_1^{m_1} x_0^{m_0} \} \cup \{\, M_i \mid \text{$i \ge 2$ and $\alpha_i \ne 0$} \,\} \cup \{\, x_i \mid \text{$i \ge 2$ and $\alpha_i = 0$} \,\}
\]
is a finite set of points (including $\msp$) and the maximum degree of those elements is $l$.
Thus $l A$ is a $\msp$-isolating class.
\end{proof}

\section{Definition of families and their birational geometry} \label{sec:deffam}

We recall basic definitions for weighted complete intersections.

\begin{Def}
Let $X$ be a complete intersection in a weighted projective space 
\[
\mbP := \mbP (a_0,\dots,a_N) = \Proj \mbC [x_0,\dots,x_N],
\] 
defined by equations $F_1 = \cdots = F_c = 0$, where $F_i \in \mbC [x_0,\dots,x_N]$ are homogeneous polynomial of degree $d$ with respect to the grading $\deg x_i = a_i$.
We say that $X$ is {\it quasi-smooth} if the quasi-affine cone
\[
C_X \subset \mbA^{N+1} = \Spec \mbC [x_0,\dots,x_N],
\]
which is defined by $F_1 = \cdots = F_c = 0$, is smooth outside the origin. 
We say that $X$ is {\it well formed} if $\codim (X \cap \Sing (\mbP)) \ge 2$ in $X$.
\end{Def}

\subsection{Main objects}

Let 
\[
X = X_{d_1, d_2} \subset \mbP (a_0,\dots,a_5) =: \mbP
\] 
be a codimension $2$ Fano $3$-fold WCI of index $1$ defined by two equations $F_1 = F_2 = 0$ with $\deg F_i = d_i$.
Recall that $X$ is quasi-smooth, well formed, has only terminal singularities and  $\sum_{i=0}^5 a_i - (d_1 + d_2) = 1$.
By quasi-smoothness, $X$ has only cyclic quotient singularities and that $\Cl (X) \cong \mbZ$ and it is generated by a divisor class $A$ on $X$ such that $\mcO_X (A) \cong \mcO_X (1)$ (see \cite[Remark 4.2]{Okada4}).
Since $X$ is well formed, the adjunction holds, that is, we have
\[
\mcO_X (K_X) \cong \mcO_X \left(d_1 + d_2  - \sum_{i=0}^{5} a_i \right) = \mcO_X (-1).
\]

As explained in the introduction, codimension $2$ Fano $3$-fold WCIs consists of $85$ families, that is, families No.~$\msi$ with $\msi \in \mathsf{I} := \{1,2,\dots,85\}$ and we have the division into disjoint 3 pieces $\mathsf{I} = \mathsf{I}_{\mathsf{br}} \cup \IF \cup \mathsf{I}_{\mathsf{dP}}$ with $|\mathsf{I}_{\mathsf{br}}| = 19$, $|\IF| = 60$ and $|\mathsf{I}_{\mathsf{dP}}| = 6$ according to their birational properties.

Our main objects are Fano $3$-folds indexed by $\IF$ and,  among them, we particularly consider subsets $\IFi \cup \IFii$ of $\IF$
whose detailed descriptions are given in Tables \ref{table:codim2Fanos-i} and \ref{table:codim2Fanos-ii} respectively (see also Remark \ref{rem:numcharact} below for their numerical characterizations).
In Tables \ref{table:codim2Fanos-i} and \ref{table:codim2Fanos-ii}, the anticanonical degree $(A^3) = (-K_X)^3$ and the basket of singularities are given in 3rd and 4th column respectively.
In the 4th column, the subscripts $\QI, \EI$ indicate that Kawamata blowup at the point leads to a birational involution which is called a quadratic involution or an elliptic involution respectively (see \cite[Section 5]{Okada1}).
The subscript $\mathrm{d}$ indicates that the point is a {\it distinguished singular point} (see Section \ref{sec:distsing} below). 

\begin{table}[htb]
\begin{center}
\caption{Families indexed by $\IFi$}
\label{table:codim2Fanos-i}
\begin{tabular}{clcc}
No. & $X_{d_1,d_2} \subset \mbP (a_0,\dots,a_5)$ & $(A^3)$ & Basket of singularities \\[0.5mm]
\hline \\[-3.5mm]
42 & $X_{10,12} \subset \mbP (1,1,4,5,6,6)$ & $1/6$ & $\frac{1}{2}$, $2 \times \frac{1}{6} (1,5)_{\mathrm{d}}$ \\[0.6mm]
55 & $X_{12,14} \subset \mbP (1,1,4,6,7,8)$ & $1/8$ & $\frac{1}{2}$, $\frac{1}{4}_{\QI}$, $\frac{1}{8} (1,7)_{\mathrm{d}}$ \\[0.6mm]
66 & $X_{14,15} \subset \mbP (1,2,5,6,7,9)$ & $1/18$ & $2 \times \frac{1}{2}$, $\frac{1}{6} (1,5)_{\QI}$, $\frac{1}{9} (2,7)_{\mathrm{d}}$ \\[0.6mm]
68 & $X_{14,15} \subset \mbP (1,3,5,6,7,8)$ & $1/24$ & $2 \times \frac{1}{3}$, $\frac{1}{6} (1,5)_{\EI}$, $\frac{1}{8} (3,5)_{\mathrm{d}}$ \\[0.6mm]
69 & $X_{14,16} \subset \mbP (1,1,5,7,8,9)$ & $4/45$ & $\frac{1}{5} (2,3)$, $\frac{1}{9} (1,8)_{\mathrm{d}}$ \\[0.6mm]
77 & $X_{16,18} \subset \mbP (1,1,6,8,9,10)$ & $1/15$ & $\frac{1}{2}$, $\frac{1}{3}$, $\frac{1}{10} (1,9)_{\mathrm{d}}$ \\[0.6mm]
81 & $X_{18,20} \subset \mbP (1,5,6,7,9,11)$ & $4/231$ & $\frac{1}{3}$, $\frac{1}{7} (2,5)_{\EI}$, $\frac{1}{11} (5,6)_{\mathrm{d}}$ \\[0.6mm]
82 & $X_{18,22} \subset \mbP (1,2,5,9,11,13)$ & $2/65$ & $\frac{1}{5} (1,4)$, $\frac{1}{13} (2,11)_{\mathrm{d}}$ \\[0.6mm]
\end{tabular}
\end{center}
\end{table}

\begin{table}[htb]
\begin{center}
\caption{Families indexed by $\IFii$}
\label{table:codim2Fanos-ii}
\begin{tabular}{clcc}
No. & $X_{d_1,d_2} \subset \mbP (a_0,\dots,a_5)$ & $(A^3)$ & Basket of singularities \\[0.5mm]
\hline \\[-3.5mm]
40 & $X_{10,12} \subset \mbP (1,1,3,4,5,9)$ & $2/9$ & $\frac{1}{3}_{\QI}$, $\frac{1}{9} (4,5)_{\mathrm{d}}$ \\[0.9mm]
43 & $X_{10,12} \subset \mbP (1,2,3,4,5,8)$ & $1/8$ & $3 \times \frac{1}{2}$, $\frac{1}{4}_{\QI}$, $\frac{1}{8} (3,5)_{\mathrm{d}}$ \\[0.9mm]
50 & $X_{10,14} \subset \mbP (1,2,3,5,7,7)$ & $2/21$ & $\frac{1}{3}$, $2 \times \frac{1}{7} (2,5)_{\mathrm{d}}$ \\[0.9mm]
52 & $X_{10,15} \subset \mbP (1,2,3,5,7,8)$ & $5/56$ & $\frac{1}{2}$, $\frac{1}{7} (2,5)_{\mathrm{d}}$, $\frac{1}{8} (3,5)_{\mathrm{d}}$ \\[0.6mm]
53 & $X_{12,13} \subset \mbP (1,3,4,5,6,7)$ & $13/210$ & $\frac{1}{2}$, $2 \times \frac{1}{3}$, $\frac{1}{5} (1,4)_{\EI}$, $\frac{1}{7} (3,4)_{\mathrm{d}}$ \\[0.9mm]
54 & $X_{12,14} \subset \mbP (1,1,3,4,7,11)$ & $2/11$ & $\frac{1}{11} (4,7)_{\mathrm{d}}$ \\[0.9mm]
56 & $X_{12,14} \subset \mbP (1,2,3,4,7,10)$ & $1/10$ & $4 \times \frac{1}{2}$, $\frac{1}{10} (3,7)_{\mathrm{d}}$ \\[0.9mm]
57 & $X_{12,14} \subset \mbP (1,2,3,5,7,9)$ & $4/45$ & $\frac{1}{3}$, $\frac{1}{5} (2,3)_{\mathrm{d}}$, $\frac{1}{9} (2,7)_{\mathrm{d}}$ \\[0.9mm]
58 & $X_{12,14} \subset \mbP (1,3,4,5,7,7)$ & $2/35$ & $\frac{1}{5}_{\QI}$, $2 \times \frac{1}{7} (3,4)_{\mathrm{d}}$ \\[0.9mm]
61 & $X_{12,15} \subset \mbP (1,1,4,5,6,11)$ & $3/22$ & $\frac{1}{2}$, $\frac{1}{11} (5,6)_{\mathrm{d}}$ \\[0.9mm]
62 & $X_{12,15} \subset \mbP (1,3,4,5,6,9)$ & $1/18$ & $\frac{1}{2}$, $3 \times \frac{1}{3}$, $\frac{1}{9} (4,5)_{\mathrm{d}}$ \\[0.9mm]
63 & $X_{12,15} \subset \mbP (1,3,4,5,7,8)$ & $3/56$ & $\frac{1}{4}$, $\frac{1}{7} (3,4)_{\mathrm{d}}$, $\frac{1}{8} (3,5)_{\mathrm{d}}$ \\[0.9mm]
65 & $X_{14,15} \subset \mbP (1,2,3,5,7,12)$ & $1/12$ & $\frac{1}{2}$, $\frac{1}{3}$, $\frac{1}{12} (5,7)_{\mathrm{d}}$ \\[0.9mm]
67 & $X_{14,15} \subset \mbP (1,3,4,5,7,10)$ & $1/20$ & $\frac{1}{2}$, $\frac{1}{4}$, $\frac{1}{5} (2,3)_{\QI}$, $\frac{1}{10} (3,7)_{\mathrm{d}}$ \\[0.9mm]
70 & $X_{14,16} \subset \mbP (1,3,4,5,7,11)$ & $8/165$ & $\frac{1}{3}$, $\frac{1}{5} (2,3)_{\QI}$, $\frac{1}{11} (4,7)_{\mathrm{d}}$ \\[0.9mm]
72 & $X_{15,16} \subset \mbP (1,2,3,5,8,13)$ & $1/13$ & $2 \times \frac{1}{2}$, $\frac{1}{13} (5,8)_{\mathrm{d}}$ \\[0.9mm]
73 & $X_{15,16} \subset \mbP (1,3,4,5,8,11)$ & $1/22$ & $2 \times \frac{1}{4}$, $\frac{1}{11} (3,8)_{\mathrm{d}}$ \\[0.9mm]
74 & $X_{14,18} \subset \mbP (1,2,3,7,9,11)$ & $2/33$ & $2 \times \frac{1}{3}$, $\frac{1}{11} (2,9)_{\mathrm{d}}$ \\[0.9mm]
79 & $X_{18,20} \subset \mbP (1,4,5,6,9,14)$ & $1/42$ & $2 \times \frac{1}{2}$, $2 \times \frac{1}{3}$, $\frac{1}{14} (5,9)_{\mathrm{d}}$ \\[0.9mm]
80 & $X_{18,20} \subset \mbP (1,4,5,7,9,13)$ & $2/91$ & $\frac{1}{7} (2,5)_{\QI}$, $\frac{1}{13} (4,9)_{\mathrm{d}}$ \\[0.9mm]
83 & $X_{20,21} \subset \mbP (1,3,4,7,10,17)$ & $1/34$ & $\frac{1}{2}$, $\frac{1}{17} (7,10)_{\mathrm{d}}$
\end{tabular}
\end{center}
\end{table}

\subsection{Notation on weighted complete intersections}

For a weighted projective space
\[
\mbP (a_0,\dots,a_n) = \Proj \mbC [x_0,\dots,x_n], \quad (\text{with $\deg (x_i) = a_i$}),
\]
and (weighted) homogeneous polynomials $f_j (x_0,\dots,x_n)$, $1 \le j \le m$, we denote by
\[
(f_1 = \cdots = f_m = 0) \subset \mbP (a_0,\dots,a_n)
\]
the closed subscheme defined by the homogeneous ideal $(f_1, \dots,f_m)$.

Let $X = X_{d_1, d_2} \subset \mbP (a_0,\dots,a_5)$ be a member of family No.~$\msi$ with $\msi \in \IF$.
Then $d_1 \ne d_2$ and $a_i = 1$ for some $0 \le i \le 5$ (see \cite[Section 9]{Okada1}).
Throughout the paper we assume that $d_1 < d_2$ and $1 = a_0 \le \cdots \le a_5$ unless otherwise specified.
The homogeneous coordinates of the ambient weighted projective space $\mbP (a_0,\dots,a_5)$ are usually denoted by $x, y, z, s, t, u$ and we have 
\[
\deg (x) = 1, \deg (y) = a_1, \deg (z) = a_2, \deg (s) = a_3, \deg (t) = a_4, \deg (u) = a_5.
\] 
We denote by 
\[
F_1 = F_1 (x,y,z,s,t,u), \quad F_2 = F_2 (x,y,z,s,t,u)
\] 
homogeneous polynomials of degree $d_1, d_2$ respectively which define $X$, that is, 
\[
X = (F_1 = F_2 = 0) \subset \mbP (a_0,\dots,a_5).
\]
We set $A = -K_X$.
Then $\mcO_X (A) \cong \mcO_X (1)$ and the Weil divisor class group $\Cl (X)$ is isomorphic to $\mbZ$ with positive generator $A$.

Let $v \in \{x, y, z, s, t, u\}$ be a homogeneous coordinate.
We define
\[
H_v := (v = 0) \cap X \subset X.
\]
We denote by $\msp_v$ the coordinate point of $\mbP (a_0,\dots,a_5)$ at which only the coordinate $v$ does not vanish.
For example, $\msp_u = (0\!:\!\cdots\!:\!0\!:\!1)$.
The restriction to $X$ of the projection from $\msp_v$ is denoted by $\pi_v$ and is also called the {\it projection from $\msp_v$}.
For example, 
\[
\pi_u \colon X \ratmap \mbP (a_0,\dots,a_4), \quad
(x\!:\!y\!:\!z\!:\!s\!:\!t\!:\!u) \mapsto (x\!:\!y\!:\!z\!:\!s\!:\!t).
\]
The projection $\pi_v$ is a generically finite dominant rational map which is defined possibly outside $\msp_v$, and the union of curves contracted by $\pi_v$ is denoted by $\Exc (\pi_v)$.

\subsection{Distinguished singular points} \label{sec:distsing}

Let $X$ be a member of family No.~$\msi$ with $\msi \in \IFi \cup \IFii$ (or more generally $\msi \in \IF$).
It is proved in \cite[Section 4.2]{Okada1} that there exists a Fano 3-fold $X'$ of Picard number $1$ which is birational to $X$ but not isomorphic to $X$, and $X$ admits a Sarkisov link $\sigma \colon X \ratmap X'$ which sits in the commutative diagram:
\[
\xymatrix{
Y \ar[d]_{\varphi} \ar@{-->}[r]^{\tau} & Y' \ar[d]^{\varphi'} \\
X \ar@{-->}[r]_{\sigma} & X'}
\]
Here $\varphi, \varphi'$ are divisorial contractions and $\tau$ is a flop.
Moreover $\varphi$ is the Kawamata blowup at a singular point $\msp \in X$.
We call $\msp$ a {\it distinguished singular point} of $X$ (see Section \ref{sec:flop} for further details).
For most of the families, $X$ admits a unique distinguished singular point.
However, for some families, $X$ admits several distinguished singular points, that is, $X$ admits several Sarkisov links to $X'$ (Note that the targets $X'$ are the same). 
In other words, a singular point $\msp \in X$ is distinguished if and only if the Kawamata blowup at $\msp$ leads to a Sarkisov link to $X'$.  
In Tables \ref{table:codim2Fanos-i} and \ref{table:codim2Fanos-ii}, distinguished singular points of $X$ are singular points given in 4th column marked $\mathrm{d}$ as a subscript.

\begin{Def}
Let $X$ be a member of family No.~$\msi$ with $\msi \in \IFi \cup \IFii$.
For a distinguished singular point $\msp \in X$, we denote by $\Upsilon_{\msp} \subset X$ the image via $\varphi$ of the union of the flopping curves on $Y$.
We then define
\[
\Upsilon_X := \bigcup_{\msp} \Upsilon_{\msp},
\]
where $\msp$ runs over the distinguished singular points on $X$.
\end{Def}

\begin{Rem}
After a suitable coordinate change if necessary, we may assume that a distinguished singular point $\msp$ is a coordinate point, i.e.\ typically $\msp = \msp_u$ and sometimes $\msp = \msp_t$.
If $\msp = \msp_v$, where $v \in \{t, u\}$, then $\Upsilon_{\msp} = \Exc (\pi_v)$ (see \cite[Section 4.2]{Okada1} for details).
\end{Rem}

\begin{Rem} \label{rem:numcharact}
We explain a numerical characterization of $\IFi$ and $\IFii$.
Let $X = X_{d_1,d_2} \subset \mbP (a_0,\dots,a_5)$ be a member of No.~$\msi$ with $\msi \in \IF$.
We assume that $d_1 \le d_2$ and $a_0 \le a_1 \le \cdots \le a_5$.
\begin{itemize}
\item $\msi \in \IFi$ if and only if $a_1 a_5 (-K_X)^3 \le 1$.
\item $\msi \in \IFii$ if and only if $1 < a_1 a_5 (-K_X)^3 \le 2$, $a_1 a_3 (-K_X)^3 \le 1$ and $\mathrm{wp} (\msp) > d_1$ for any distinguished singular point of $X$.
\end{itemize}
\end{Rem}

\subsection{Generality assumptions and statements of main results}

For a member $X$ of family No.~$\msi \in \IFi \cup \IFii$, we introduce the following conditions.

\begin{Cond} \label{cd}
\begin{enumerate}
\item $X$ is quasi-smooth.
\item The conditions given in \cite[Condition 3.1]{Okada1} are satisfied.
\end{enumerate}
\end{Cond}

It is clear that the above conditions are satisfied for general members of family No.~$\msi$.
We introduce further conditions on specific families.

\begin{Cond} \label{cdsp}
\begin{enumerate}
\item If $\msi \in \{40,43,50,52,53,67\} \subset \IFii$, then the $1$-dimensional scheme 
\[
L_{xy} := (x = y = 0) \cap X
\] 
is irreducible and reduced (cf.\ Lemma \ref{lem:irredLxy}). 
\item If $a_1 > 1$ (resp.\ $a_1 = 1$), then the conclusion of Lemma \ref{lem:multtang} (resp.\ Lemma \ref{lem:lcttang}) holds.
\item If $\msi \in \IFii$, then, for each distinguished singular point $\msp \in X$, the flopping locus $\Upsilon_{\msp} \subset X$ consists of $d_1 d_2/\mathrm{wp} (\msp)$ distinct irreducible and reducible curves (cf.\ Section \ref{sec:flop}).
\end{enumerate}
\end{Cond}

We can now state main results of this article in a precise form.

\begin{Thm}[$=$ Theorem \ref{mainthm1}] \label{thm:main1}
Let $X$ be a member of family No.~$\msi$ with
\[
\msi \in \IFi \cup \{79, 80, 83\} = \{42, 55, 66, 68, 69, 77, 79, 81, 80, 82, 83\} \subset \IFi \cup \IFii,
\]
satisfying \emph{Conditions \ref{cd}} and \emph{\ref{cdsp}}.
Then $\lct (X) = 1$.
In particular $X$ is K-stable and admits an orbifold K\"{a}hler--Einstein metric. 
\end{Thm}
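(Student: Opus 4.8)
The plan is to prove the two inequalities $\lct(X) \le 1$ and $\lct(X) \ge 1$ separately. The upper bound is immediate: since $a_0 = 1$, the divisor $H_x = (x = 0) \cap X$ is an effective member of $|A| = |{-}K_X|$, and for $X$ satisfying Condition \ref{cd} it is a prime divisor. As $(X, c H_x)$ fails to be log canonical along $H_x$ for any $c > 1$, we get $\lct(X) \le \lct(X, H_x) \le 1$, so the whole content lies in the lower bound. By Remark \ref{rem:covex} it suffices to prove $\lct_{\msp}(X) \ge 1$ for every $\msp \in X$, equivalently to rule out an irreducible effective $\mbQ$-divisor $D \sim_{\mbQ} A$ with $(X, D)$ not log canonical at $\msp$. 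I would stratify $X$ by the type of $\msp$ and treat each stratum with the appropriate tool from Section \ref{sec:prelim}.

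For a nonsingular point $\msp \notin \Upsilon_X$ I would construct a $\msp$-isolating class $lA$ via Lemmas \ref{lem:findisol} and \ref{lem:isolstr}, choosing a projection and coordinates so that $\msp$ avoids the relevant coordinate hyperplanes and contracted loci, and exhibit coordinate sections $S_1 \sim_{\mbQ} c_1 A$, $S_2 \sim_{\mbQ} c_2 A$ through $\msp$. The numerical characterization of Remark \ref{rem:numcharact}, namely $a_1 a_5 (A^3) \le 1$ for $\msi \in \IFi$, is exactly what makes the inequality $\max\{c_1,c_2\}\, l (A^3) \le 1$ of Lemma \ref{lem:exclG}(1) hold, giving $\lct_{\msp}(X) \ge 1$. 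Along $\Upsilon_X$, and for the three families $79, 80, 83$ drawn from $\IFii$ where $a_1 a_5 (A^3) > 1$, this crude estimate is too weak because anticanonical divisors may be comparatively singular there; here I would invoke Lemma \ref{lem:criwisol} with $\Gamma$ a component of $\Upsilon_X$, using Condition \ref{cdsp}(3) to describe $\Upsilon_{\msp}$ as $d_1 d_2 / \wprod(\msp)$ distinct reduced irreducible curves and computing $d = (A \cdot \Gamma)$ and $m = \mult_{\msp}(\Gamma)$, then building a $(\msp, \Gamma)$-isolating class (Lemma \ref{lem:isolsetclass}) and a divisor $S$ with $(X, \tfrac{1}{c} S)$ log canonical at $\msp$. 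Condition \ref{cdsp}(2) supplies the needed log canonicity and multiplicity statements for the tangent divisor, and one checks the dichotomy (2-a)/(2-b).

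At the singular points the analysis splits. For a non-distinguished quotient singular point (including the $\QI$ and $\EI$ points) I would apply Lemma \ref{lem:exclL}, producing prime divisors $S_1, S_2$ whose scheme-theoretic intersection pulls back, under the index-one cover, to an irreducible curve of controlled multiplicity, with the degree condition $r c_1 c_2 (A^3) \le 1$ guaranteed by the small anticanonical degree $(A^3)$ of these families. The delicate case is a distinguished singular point $\msp$, and this is where birational bi-rigidity, as opposed to rigidity, enters. Because the Kawamata blowup $\varphi \colon Y \to X$ at $\msp$ initiates a Sarkisov link to $X'$, consisting of a flop $Y \dashrightarrow Y'$ followed by the divisorial contraction $\varphi'$, the class $(-K_Y)^2$ sits on the boundary of $\bNE(Y)$ rather than in its interior. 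I would verify $(-K_Y)^2 \notin \Int \bNE(Y)$ from this flop structure and exhibit a prime divisor $S$ on $X$ whose proper transform satisfies $\tilde{S} \sim_{\mbQ} -m K_Y$, so that Lemma \ref{lem:singptNE} yields $\lct_{\msp}(X) \ge 1$.

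The hard part is precisely this distinguished singular point. The methods that settle the birationally rigid families indexed by $\Ibr$ rely on the Kawamata blowup producing enough positivity on $Y$, but here the flop forces one to understand the geometry of the link $X \dashrightarrow X'$ in detail, including the flopping curves, the structure of $\bNE(Y)$, and the construction of the divisor $S$ with $\tilde{S}$ proportional to $-K_Y$, before Lemma \ref{lem:singptNE} can be applied. The second, more technical, obstacle is the sharpened $(\msp, \Gamma)$-isolating estimate of Lemma \ref{lem:criwisol}, which is the only bound strong enough to handle the nonsingular points on $\Upsilon_X$ and the three borderline families $79, 80, 83$.
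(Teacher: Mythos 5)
Your overall reduction (upper bound via $H_x$, then pointwise lower bounds stratified by the type of point) matches the paper, but your plan for the distinguished singular points --- which you correctly single out as the heart of the matter --- rests on a claim that is false. You propose to verify $(-K_Y)^2 \notin \Int \bNE(Y)$ ``from the flop structure'' and then apply Lemma \ref{lem:singptNE}. At a distinguished point exactly the opposite holds. Since the Kawamata blowup $\varphi \colon Y \to X$ initiates a Sarkisov link, $-K_Y$ is nef and big (it is the pullback of the ample anticanonical class of the common anticanonical model of $Y$ and $Y'$), the nef cone of the rank-two variety $Y$ is spanned by $\varphi^* A$ and $-K_Y$, and one computes
\[
(-K_Y)^2 \cdot \varphi^* A = (A^3) > 0, \qquad (-K_Y)^2 \cdot (-K_Y) = (-K_Y)^3 > 0,
\]
the first equality because $(\varphi^*A)^2 \cdot E = \varphi^*A \cdot E^2 = 0$, the second because bigness of the nef class $-K_Y$ is needed for the link to exist. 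Hence $(-K_Y)^2$ pairs strictly positively with every nonzero nef class, i.e.\ $(-K_Y)^2 \in \Int \bNE(Y)$, and Lemma \ref{lem:singptNE} is inapplicable precisely at distinguished points. That lemma is the paper's tool for the \emph{unmarked} singular points (Lemma \ref{lem:lctsingpt}), where $(-K_Y)^3 \le 0$; the existence of the flop is what destroys, not what furnishes, its hypothesis.

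What the paper actually does at distinguished points is different, and you would need to supply it. For $\msi \in \IFi$ the distinguished point lies on $L_{xy}$, and Proposition \ref{prop:lctsingpI} applies Lemma \ref{lem:exclL} to general members $S_1 \in |A|$, $S_2 \in |a_1 A|$ cutting out $L_{xy}$; the inputs are irreducibility of $L_{xy}$ (Lemma \ref{lem:irredLxy}), the multiplicity bound $\mult_{\check{\msp}}(\check{L}_{xy}) \le a_1$ on the index-one cover (Remark \ref{rem:multLxy}), and $r \cdot 1 \cdot a_1 (A^3) \le a_5 a_1 (A^3) \le 1$. For $\msi \in \{79,80,83\}$ that degree bound fails (there $a_1 a_5 (A^3) > 1$), and the paper instead proves the ad hoc Lemma \ref{lem:somedistsingpt}, a direct multiplicity argument intersecting a putative non-log-canonical divisor $D$ with $H_x$ and a divisor built from $x_{j_1}, x_{j_2}$, whose hypotheses $H_x \cap \Upsilon_{\msp} = \{\msp\}$ and $a_k a_{j_2}(A^3) \le 2$ are verified case by case (Example \ref{ex:Upsilon}, Proposition \ref{prop:singmost}). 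Two smaller gaps of the same kind: nonsingular points \emph{on} $L_{xy}$ are not reachable by your isolating-class strategy (Lemmas \ref{lem:findisol} and \ref{lem:isolstr} require nonvanishing coordinates that such points lack), and the paper again uses Lemma \ref{lem:exclL} via the curve $L_{xy}$ (Proposition \ref{prop:compLxy}); and nonsingular points on the flopping curves are handled not by Lemma \ref{lem:criwisol} but by intersection theory and inversion of adjunction on the normal tangent surface $T_1$ (Proposition \ref{prop:lctfflopcurve}), which is where the hypothesis $\wprod(\msp) \ge d_1$ and Condition \ref{cdsp}(3) enter.
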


\begin{Thm} \label{thm:main2}
Let $X$ be a member of family No.~$\msi$ with 
\[
\msi \in \IFii \setminus \{79, 80, 83\}
\] 
satisfying \emph{Conditions \ref{cd}} and \emph{\ref{cdsp}}, and let $X^{\circ} \subset X$ be the complement of the set of distinguished singular points on $X$.
Then $\lct_{X^{\circ}} (X) = 1$, or equivalently, for any effective $\mbQ$-divisor $D \sim_{\mbQ} - K_X$, the pair $(X, D)$ is log canonical along $X^{\circ}$.
\end{Thm}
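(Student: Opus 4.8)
The plan is to prove the equivalent assertion that $\lct_{\msp}(X) \ge 1$ for every point $\msp \in X^{\circ}$. The opposite inequality $\lct_{X^{\circ}}(X) \le 1$ is automatic: at a general nonsingular point $\msp$ of the divisor $H_x = (x = 0) \cap X \in |A|$ one has $\lct_{\msp}(X) \le \lct_{\msp}(X, H_x) = 1$, and such a point lies in $X^{\circ}$. By Remark~\ref{rem:covex} the lower bound reduces to showing that for each $\msp \in X^{\circ}$ no irreducible effective $\mbQ$-divisor $D \sim_{\mbQ} A$ fails to be log canonical at $\msp$. I would stratify $X^{\circ}$ into three loci and attack each with a different tool from Section~\ref{sec:prelim}: (i) the non-distinguished terminal quotient singular points (the points marked $\QI$, $\EI$ or left unmarked in Tables~\ref{table:codim2Fanos-i} and~\ref{table:codim2Fanos-ii}); (ii) the nonsingular points lying on the distinguished flopping locus $\Upsilon_X$ or on the curve $L_{xy}$; and (iii) the remaining nonsingular points.

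For a non-distinguished singular point $\msp$ I would apply Lemma~\ref{lem:singptNE}. Writing $\varphi \colon Y \to X$ for the Kawamata blowup at $\msp$, this requires checking the Mori-cone condition $(-K_Y)^2 \notin \Int \bNE(Y)$ and producing a prime divisor $S$ on $X$ whose proper transform satisfies $\tilde S \sim_{\mbQ} -m K_Y$ for some $m > 0$. In practice $S$ is a coordinate hyperplane section $H_v$ selected so that, after the weighted blowup, its transform becomes proportional to $-K_Y$; the $\QI$/$\EI$ structure of these points, together with the numerical constraints defining $\IFii$ in Remark~\ref{rem:numcharact}, makes the two-ray geometry on $Y$ explicit enough to verify both conditions family by family. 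When no single divisor suffices, Lemma~\ref{lem:exclL} provides an alternative using a pair of divisors and an auxiliary curve.

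For a generic nonsingular point $\msp$ (locus (iii)) I would first manufacture an inexpensive $\msp$-isolating class $lA$, either by projecting from a coordinate point through Lemma~\ref{lem:findisol} or by the semigroup construction of Lemma~\ref{lem:isolstr}, and then choose two coordinate hyperplane sections $S_1 \sim_{\mbQ} c_1 A$ and $S_2 \sim_{\mbQ} c_2 A$ through $\msp$ so as to invoke Lemma~\ref{lem:exclG}(1). The inequality to verify is $\max\{c_1,c_2\}\, l\, (A^3) \le 1$, and it is precisely here that the numerical bounds $a_1 a_5 (A^3) \le 2$ and $a_1 a_3 (A^3) \le 1$ characterizing $\IFii$ are consumed: away from a small number of coordinate hyperplanes the projection keeps $l$ small enough that the product does not exceed $1$.

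The main obstacle is locus (ii). At a nonsingular point on $\Upsilon_X$ or on $L_{xy}$ the cheap isolating class is too large, so $\max\{c_1,c_2\}\, l\, (A^3) > 1$ and Lemma~\ref{lem:exclG} is no longer available. Here I would replace the $\msp$-isolating class by a $(\msp, \Gamma)$-isolating class, taking $\Gamma$ to be the special curve through $\msp$, and apply the refined Lemma~\ref{lem:criwisol} with the invariants $d = (A \cdot \Gamma)$ and $m = \mult_{\msp}(\Gamma)$. Making this work depends entirely on the geometric input of Condition~\ref{cdsp}: the irreducibility and reducedness of $L_{xy}$, the tangency and multiplicity estimates along $\Gamma$ furnished by Condition~\ref{cdsp}(2), and the description of $\Upsilon_{\msp}$ as $d_1 d_2/\wprod(\msp)$ distinct irreducible curves from Condition~\ref{cdsp}(3). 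The delicate step is to check, case by case, that one of the two alternatives $(2\text{-a})$ or $(2\text{-b})$ of Lemma~\ref{lem:criwisol} holds; the constraint $\wprod(\msp) > d_1$ that singles out $\IFii$ is exactly what keeps these inequalities in force, while the families $\{79, 80, 83\}$, for which the stronger global conclusion of Theorem~\ref{thm:main1} is available, are separated out and treated there.
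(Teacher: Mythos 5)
Your stratification of $X^{\circ}$ coincides with the paper's (non-distinguished singular points; nonsingular points on $L_{xy} \cup \Upsilon_X$; the rest), and you have the right toolbox in view, but two of your concrete lemma applications fail numerically, and these failures are exactly where the paper's real work lies. The most serious gap is locus (iii). Lemma \ref{lem:exclG}(1) cannot be invoked at a general nonsingular point: after moving $\msp$ to $\msp_x$, the unique prime divisor of degree $\le a_1$ through $\msp$ is the tangent divisor $T_{\msp} \sim_{\mbQ} a_1 A$, so any second prime divisor through $\msp$ has degree at least $a_2$, forcing $\max\{c_1,c_2\} \ge a_2$; with the isolating classes the paper constructs (degree $l \ge a_3$, and nothing cheaper is available) the required inequality becomes $a_2 a_3 (A^3) \le 1$, which fails for every family in $\IFii$ (for No.~40 it reads $3 \cdot 4 \cdot \tfrac{2}{9} = \tfrac{8}{3}$). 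The bounds $a_1 a_3 (A^3) \le 1$ and $a_1 a_5 (A^3) \le 2$ that you plan to "consume" only help when the coefficient multiplying $l$ is $a_1$ itself, i.e.\ when a \emph{single} divisor of degree $a_1$ suffices --- and that is precisely the one-divisor criteria \ref{lem:exclG}(2) and \ref{lem:criwisol}, whose hypothesis is that $(X, \tfrac{1}{a_1} T_{\msp})$ is log canonical at $\msp$. Supplying that hypothesis is the whole purpose of Condition \ref{cdsp}(2) via Lemmas \ref{lem:multtang} and \ref{lem:lcttang}; your proposal never brings the tangent divisor into locus (iii), so this step has no valid substitute in your argument.

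The second gap is the flopping locus. Lemma \ref{lem:criwisol} cannot handle a nonsingular point $\msq \in \Upsilon_{\msp}$: the flopping curve $\Gamma$ through $\msq$ has $d = (A \cdot \Gamma) = 1/a_k$ and $m = \mult_{\msq}(\Gamma) = 1$, while the natural $(\msq,\Gamma)$-isolating class coming from $T_1 \cap T_2$ has degree $l = \max\{a_{j_1}, a_{j_2}\} < a_k$, so $ld < m$ and you are forced into alternative (2-a), which demands $c\, a_k (A^3) \le 1$. But membership in $\IFii$ is characterized by $a_1 a_5 (A^3) > 1$ with $a_k$ the large weight, so this fails already for $c = 1$ (for No.~54 one has $a_5 (A^3) = 2$). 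This is why the paper devotes Section \ref{sec:flop} to these points: Proposition \ref{prop:lctfflopcurve} is not an instance of \ref{lem:criwisol} but a separate argument --- restriction to the normal tangent surface $T_1$, the computation $(\Gamma_l \cdot \Gamma_m)_{T_1} = a_{i_1}a_{i_2}/(d_1 a_k)$, and inversion of adjunction --- and it is there, not in \ref{lem:criwisol}, that the inequality $\wprod(\msp) \ge d_1$ is consumed. Finally, a structural (if smaller) issue at the singular points: Lemma \ref{lem:singptNE} is only known to apply when $(-K_Y)^3 \le 0$, which is why the paper restricts it to the unmarked points (Lemma \ref{lem:lctsingpt}); at the $\QI$/$\EI$ points the Kawamata blowup initiates an untwisting involution, so the hypothesis $(-K_Y)^2 \notin \Int \bNE(Y)$ cannot be expected to hold, and the paper instead runs Lemma \ref{lem:exclL} along $L_{xy}$ with the multiplicity bounds of Remark \ref{rem:multLxy} (Proposition \ref{prop:singmost}). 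Your "fallback" is in fact the only available tool there, not an alternative.
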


Theorem \ref{thm:main2} is not enough to conclude the existence of K\"ahler--Einstein metric or K-stability.
However, this is clearly an important step toward the computation the global log canonical thresholds, which will be continued in a forthcoming paper.


\section{General computations} \label{sec:comp}

\subsection{Multiplicities of some divisors} \label{sec:tangdiv}

Let $X = X_{d_1,d_2} \subset \mbP := \mbP (1,a_1,\dots,a_5)$ be a member of family No.~$\msi$ with $\msi \in \IF$.
We assume that $1 \le a_1 \le a_2 \le \cdots \le a_5$.
Note that $a_1 < a_2$.

Suppose that $1 < a_1$ and let $\msp \in X \setminus H_x$ be a nonsingular point.
Then the linear system $|\mcI_{\msp} (a_1 A)|$ consists of a unique member and we denote it by $T_{\msp}$.

\begin{Lem} \label{lem:multtang}
Suppose that $1 < a_1$ and that $X$ is general.
Then, for any point $\msp \in X \setminus H_x$, we have $\mult_{\msp} (T_{\msp}) \le 2$.
\end{Lem}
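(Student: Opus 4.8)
The plan is to reduce the statement to the order of vanishing of a single coordinate function and then to exclude the degenerate locus by a dimension count in the space of defining equations. Since $1 = a_0 < a_1 < a_2 \le \cdots \le a_5$, the only monomials of degree $a_1$ in $x,y,z,s,t,u$ are $x^{a_1}$ and $y$, so $|a_1 A|$ is the pencil spanned by the restrictions of $(x^{a_1}=0)$ and $(y=0)$ to $X$. Writing $\msp = (\xi_0\!:\!\cdots\!:\!\xi_5)$ with $\xi_0 \ne 0$ (as $\msp \notin H_x$) and normalizing $\xi_0 = 1$, the unique member through $\msp$ is $T_{\msp} = (\xi_1 x^{a_1} - y = 0) \cap X$. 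Working in the affine chart $\{x \ne 0\} \cong \mbA^5$ with coordinates $y,z,s,t,u$ (legitimate because $a_0 = 1$), the point $\msp$ is a smooth point of the complete intersection $X \cap \{x \ne 0\} = (f_1 = f_2 = 0)$, and $T_{\msp}$ is cut out on $X$ by the single function $y - \xi_1$. Hence
\[
\mult_{\msp}(T_{\msp}) = \ord_{\mcO_{X,\msp}}(y - \xi_1),
\]
so the task becomes: for general $X$, the function $y$ vanishes to order at most $2$ along its level set through every smooth point off $H_x$.

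Next I would analyze this order locally. Translating $\msp$ to the origin and writing $f_i = L_i + Q_i + \cdots$ for the decomposition into homogeneous parts, smoothness of $X$ at $\msp$ means $L_1, L_2$ are linearly independent and $T_{\msp}X = (L_1 = L_2 = 0)$ is three-dimensional. The linear form $\eta := y - \xi_1$ satisfies $\ord_{\msp}(\eta|_X) \ge 2$ exactly when the differential $dy$ lies in the linear span $\langle L_1, L_2 \rangle$, say $dy = \alpha L_1 + \beta L_2$; and, using $L_i \equiv -Q_i \pmod{\mfm^3}$ on $X$, one finds $\ord_{\msp}(\eta|_X) \ge 3$ exactly when, in addition, the quadratic form $\alpha Q_1 + \beta Q_2$ vanishes identically on $T_{\msp}X$. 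Thus $\mult_{\msp}(T_{\msp}) \ge 3$ is equivalent to the conjunction of the membership conditions $f_1(\msp) = f_2(\msp) = 0$, the criticality condition $dy \in \langle L_1, L_2 \rangle$, and the vanishing on the $3$-plane $T_{\msp}X$ of the Hessian quadratic form $\alpha Q_1 + \beta Q_2$.

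I would then run an incidence-variety argument over the space $\mcP$ of coefficient vectors of $(F_1, F_2)$. Let $\mcZ \subset \mbA^5 \times \mcP$ be the locus of pairs $(\msp, F)$ for which $\msp$ is a smooth point of $X_F$ off $H_x$ with $\mult_{\msp}(T_{\msp}) \ge 3$. Fixing $\msp$ and counting conditions on $F$: the two membership conditions (on the $0$-jet) and the three conditions expressing $dy \in \langle L_1, L_2 \rangle$ (on the $1$-jet) are independent and cut out codimension $5$, because at a point off $H_x$ the coordinate $x$ of weight one lets us realize the required $0$- and $1$-jets freely via the monomials $x^{d_i}$ and $x^{d_i - a_j} x_j$. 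If even one of the Hessian conditions is genuinely imposed, the codimension rises to at least $6$; since $\msp$ ranges in the $5$-dimensional chart, this yields $\dim \mcZ \le \dim \mcP - 1$, so the projection $\mcZ \to \mcP$ is not dominant and a general $X$ has no smooth point off $H_x$ with $\mult_{\msp}(T_{\msp}) \ge 3$, which is the assertion.

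The crux — the step I expect to be the main obstacle — is verifying that the Hessian condition really contributes at least one further independent constraint, i.e.\ that on the locus where $\msp$ is a critical point of $y|_X$ one can perturb $(F_1, F_2)$ to make $\alpha Q_1 + \beta Q_2$ nonzero on $T_{\msp}X$. The subtle point is that at a critical point the linear form $y$ already restricts to $0$ on $T_{\msp}X$, so \emph{every} monomial divisible by $y$ also restricts to $0$ there and is useless for perturbing the Hessian; one must instead use a quadratic monomial in $z,s,t,u$ alone. Because $\msp \notin H_x$ and $\deg x = 1$, the monomial $x^{d_1 - 2a_2} z^2$ can be included in $F_1$ as soon as $d_1 \ge 2 a_2$, which holds for every family in the list, and its restriction to $T_{\msp}X$ is nonzero away from the further special locus $dz \in \langle L_1, L_2 \rangle$. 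This exhibits a point of the critical locus with nonvanishing Hessian, so $\{\mult_{\msp}(T_{\msp}) \ge 3\}$ is a proper closed condition and the count above is effective; I would record this monomial bookkeeping once, rather than family by family.
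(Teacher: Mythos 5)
Your proposal is correct and follows essentially the same route as the paper: the paper also fixes the point (normalized to $\msp_x$), observes that $\mult_{\msp}(T_{\msp})\ge 2$ amounts to the vanishing of the two constant terms plus a rank-$\le 1$ condition on a $2\times 4$ matrix of first-order coefficients (five conditions), asserts that $\mult_{\msp}(T_{\msp})\ge 3$ imposes further conditions, and concludes with the identical incidence-variety count $\dim \mcW' \le \dim\mbP + (\dim\mcF - 6) < \dim\mcF$. The only difference is one of detail: where the paper merely states that the sixth condition is ``clear,'' you verify it via the Hessian criterion and the perturbation by $x^{d_1-2a_2}z^2$ (noting only that when the coefficient $\alpha$ of $Q_1$ vanishes one should instead perturb $F_2$ by $x^{d_2-2a_2}z^2$), which makes your write-up slightly more rigorous at exactly the step the paper elides.
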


\begin{proof}
We compute the number of conditions imposed in order for $X$ to contain a point $\msp \in X \setminus H_x$ such that $\mult_{\msp} (T_{\msp}) > 2$.
By replacing coordinates, we may assume $\msp = \msp_x$.
In this case $T_{\msp} = H_y$ and we can write
\[
\begin{split}
F_1 &= \lambda_1 x^{d_1} + \alpha_1 y x^{d_1-a_1} + \beta_1 z x^{d_1-a_2} + \gamma_1 s x^{d_1-a_3} + \delta_1 t x^{d_1-a_4} + \varepsilon_1 u x^{d_1-a_5}, \\
F_2 &= \lambda_2 x^{d_2} + \alpha_2 y x^{d_2-a_1} + \beta_2 z x^{d_2-a_2} + \gamma_2 s x^{d_2-a_3} + \delta_2 t x^{d_2-a_4} + \varepsilon_2 u x^{d_2-a_5},
\end{split}
\]
for some $\lambda_j, \alpha_j,\dots,\varepsilon_j \in \mbC$.
We see that $\mult_{\msp} (H_y) \ge 2$ if and only if $\lambda_1 = \lambda_2 = 0$ and the rank of the matrix
\[
\begin{pmatrix}
\beta_1 & \gamma_1 & \delta_1 & \varepsilon_1 \\
\beta_2 & \gamma_2 & \delta_2 & \varepsilon_2
\end{pmatrix}
\]
if less than $2$, which imposes $5$ conditions for $\lambda_j, \alpha_j, \dots,\varepsilon_j$.
Although we do not make it explicit, it is clear that further additional conditions are imposed in order for the inequality $\mult_{\msp} (H_y) \ge 3$.

Let $\mcF$ be the space parametrizing the members of family No.~$\msi \in \IF$ and let $U = (x \ne 0) \subset \mbP$ be the open subset.
We define
\[
\mcW^{\circ} := \{\, (X, \msp) \mid \text{$\msp \in X$ and $\mult_{\msp} (T_{\msp}) \ge 3$} \,\} \subset \mcF \times U,
\]
and then define $\mcW$ to be the closure of $\mcW^{\circ}_k$ in $\mcF \times \mbP$.
Let $\mcW'$ be a component of $\mcW$ whose image on $\mbP$ intersects $U$. 
For a point $\msp \in U$, the fiber of the projection $\mcW' \to \mbP$ over $\msp$ is of codimension at least $6$.
Thus we have
\[
\dim \mcW' \le \dim \mbP + (\dim \mcF - 6) < \dim \mcF,
\]
and this proves the assertion.
\end{proof}

Suppose that $1 = a_1 < a_2$ and let $\msp \in X \setminus L_{xy}$ be a nonsingular point.
We also denote by $T_{\msp}$ the unique member of the linear system $|\mcI_{\msp} (A)|$.

\begin{Lem} \label{lem:lcttang}
Suppose that $a_1 = 1$ and that $X$ is general.
Then, for any point $\msp \in X \setminus L_{xy}$, we have $\lct (X, T_{\msp}) = 1$.
\end{Lem}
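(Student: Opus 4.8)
The plan is to prove the two inequalities $\lct(X,T_\msp)\le 1$ and $\lct(X,T_\msp)\ge 1$ separately. The upper bound is immediate: since $a_2>a_1=1$, the linear system $|A|$ is the pencil spanned by $x$ and $y$, with base locus $\Bs|A|=L_{xy}$, and for general $X$ the member $T_\msp$ is a reduced surface (a general weighted complete intersection surface, hence irreducible). Thus at the generic point of $T_\msp$ the pair $(X,c\,T_\msp)$ fails to be log canonical as soon as $c>1$, so $\lct(X,T_\msp)\le 1$. All the content is in the lower bound, and I would first reduce it to a single distinguished member of the pencil: after a linear change of the weight-one coordinates $x,y$ I may assume $T_\msp=H_y=(y=0)\cap X$ and $x(\msp)\ne 0$. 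Since this linear group acts transitively on the members of $|A|$ and on the family $\mcF$ of all $X$, the assertion ``$\lct(X,T_\msp)=1$ for every nonsingular $\msp\notin L_{xy}$'' is equivalent to ``$\lct(X,H_y)=1$ for general $X$''. To make the generality rigorous I would run the argument on an incidence variety inside $\mcF\times\mbP^1$ (rather than invoking a union of translates $\bigcup_g g\mcB$, which would not control dimensions). It then remains to show that $(X,H_y)$ is log canonical at every point of $H_y$.

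The main tool is adjunction. Because $H_y\sim A=-K_X$, we have $K_X+H_y\sim 0$ and hence $K_{H_y}\sim (K_X+H_y)|_{H_y}\sim 0$, so $H_y$ is a surface with numerically trivial canonical class. By inversion of adjunction it suffices to prove that $H_y$ is normal with at worst log canonical surface singularities (equivalently, that the different-corrected pair on $H_y$ is log canonical). I would split the analysis according to the type of a point $\msq\in H_y$. \textbf{(i)} At a nonsingular point $\msq$ of $X$, the divisor $H_y$ is a hypersurface in the smooth three-dimensional germ $(X,\msq)$, and a failure of log canonicity forces $H_y$ either to have $\mult_\msq(H_y)\ge 4$ or to have a double or triple point whose projectivized tangent cone is too degenerate (so that the singularity is worse than du Val). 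A dimension count entirely parallel to the proof of Lemma \ref{lem:multtang}, showing that the incidence variety of such pairs $(X,\msq)$ has dimension strictly less than $\dim\mcF$, rules these out for general $X$, leaving at worst du Val singularities, which are log canonical.

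\textbf{(ii)} At a terminal quotient singular point $\msq$ of $X$ lying on $H_y$ (necessarily one of the coordinate points), for general $X$ the surface $H_y$ is quasi-smooth, hence carries a cyclic quotient (in particular log terminal) surface singularity; one then verifies directly, via the index-one cover $\rho_\msq$ and a computation of the different, that $(X,H_y)$ is log canonical there. \textbf{(iii)} Along the base curve $L_{xy}=\Bs|A|$, where the members of the pencil meet, a local computation shows that for general $X$ the surface $H_y$ is smooth or has a du Val singularity. Combining (i)--(iii) with inversion of adjunction gives $(X,H_y)$ log canonical everywhere, whence $\lct(X,H_y)\ge 1$ and, together with the upper bound, $\lct(X,H_y)=1$.

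The step I expect to be the main obstacle is \textbf{(ii)}, the behaviour at the quotient singularities of $X$ through which $H_y$ passes. There the naive slogan ``du Val implies log canonical'' is unavailable because $A=-K_X$ need not be Cartier at these points, so inversion of adjunction carries a genuinely nontrivial different; the verification must be made via the Kawamata blowup and an explicit discrepancy computation, and it must be carried out uniformly over the various singularity types $\frac{1}{r}(1,a,r-a)$ occurring in the families of Tables \ref{table:codim2Fanos-i} and \ref{table:codim2Fanos-ii}. By contrast, the dimension count in (i) is routine once the incidence variety is set up as in Lemma \ref{lem:multtang}, and (iii) is a short local calculation; it is the interaction of $H_y$ with the quotient singularities of $X$ that requires the delicate local analysis.
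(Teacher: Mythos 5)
Your upper bound and the reduction to $T_\msp=H_y$ are fine, and your step (i) is, in essence, the paper's argument; but as written the proposal has two genuine gaps. First, the step you dismiss as "routine" is the entire content of the paper's proof, and it is not formal. The count parallel to Lemma \ref{lem:multtang} shows that $\mult_{\msq}(T_{\msq})\ge 2$ imposes only $5$ conditions on pairs $(X,\msq)$, so the incidence variety has dimension equal to $\dim\mcF$: multiplicity-$2$ points of tangent divisors genuinely occur on a general $X$ and cannot be excluded. What must then be shown is that any \emph{further} degeneracy cuts the incidence variety down by at least one more condition, and this requires knowing what the quadratic part of the local equation of $T_{\msq}$ actually is as a function of the coefficients of $F_1,F_2$, after eliminating two of the five ambient coordinates. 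The paper does this explicitly: it adds the genericity condition $(\prt F_1/\prt u)(\msq)\ne 0$ at bad points (legitimate by the same codimension count), uses the numerical facts from Tables \ref{table:codim2Fanos-i} and \ref{table:codim2Fanos-ii} that $F_1$ is linear and $F_2$ at most quadratic in $u$ to write $F_1=ua-f$, $F_2=x^{d_2-1}y+\alpha u^2+ub+g$ with $f,g\in(y,z,s,t)^2$ and $b\in(y,z,s,t)$, eliminates $u$, and observes that $f^2$ and $bf$ have order $\ge 3$, so the quadratic part of the local equation of $T_{\msq}$ is exactly that of $g|_{y=0}$; finally the inequality $2a_4\le d_2$ guarantees that every quadratic monomial in $z,s,t$ occurs in degree $d_2$, so this quadric is general, hence a nonsingular conic, and \cite[Lemma 8.10]{Kol} gives $\lct=1$ there. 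Your proposal never invokes these family-specific numerical conditions; without them the claim that "degenerate tangent cone" has codimension $\ge 6$ in $\mcF\times U$ is unsupported, since the relevant quadric need not be freely prescribable (its $t^2$-coefficient, for instance, exists only because $2a_4\le d_2$).

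Second, your steps (ii) and (iii), which you yourself single out as the main obstacle, are placeholders ("one then verifies directly\dots", "a local computation shows\dots"), so they are gaps as written. They are also misdirected effort: the paper's proof, and every later application of the lemma (Propositions \ref{prop:nonsingpart2i} and \ref{prop:nonsingpart2ii}), only uses log canonicity of $(X,T_\msp)$ \emph{at the nonsingular point $\msp\notin L_{xy}$ itself}, so no analysis of $T_\msp$ at quotient singularities of $X$ or along $L_{xy}$, and hence no different or index-one-cover computation, ever enters; the paper works pointwise and never needs inversion of adjunction or the classification of log canonical surface singularities. The genuine difficulty of the lemma sits precisely in the step your proposal treats as routine, not in the steps you flag as delicate.
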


\begin{proof}
Let $\msp \in X \setminus L_{xy}$ be a point.
As in the proof of Lemma \ref{lem:multtang}, we see that $5$ conditions are imposed in order for $X$ to contain a point $\msp \in X \setminus L_{xy}$ such that $\mult_{\msp} (T_{\msp}) > 1$.
Thus, we cannot conclude that $\mult_{\msp} (T_{\msp}) = 1$ for all $\msp \in X \setminus L_{xy}$.
However, by the above dimension count, we can assume that a general $X$ does not contain a point $\msp \notin L_{xy}$ satisfying $\mult_{\msp} (T_{\msp}) > 1$ and some further additional conditions.
For example, we can and do assume that $(\prt F_1/\prt u) (\msp) \ne 0$ for any $\msp \notin L_{xy}$ such that $\mult_{\msp} (T_{\msp}) > 1$.

Now let $\msp \notin L_{xy}$ be a point.
By choosing coordinates, we assume $\msp = \msp_x$.
In this case we have $T_{\msp} = H_y$.
If $\mult_{\msp} (T_{\msp}) = 1$, then $(X, T_{\msp})$ is log canonical at $\msp$.
Hence we assume $\mult_{\msp} (T_{\msp}) > 1$.
By Tables \ref{table:codim2Fanos-i} and \ref{table:codim2Fanos-ii}, we have $2 a_1 > d_1$ and $2 a_1 \ge d_2$, which implies that $F_1$ is linear and $F_2$ is at most quadratic with respect to $u$, and we can write
\[
F_1 = u a - f, \quad
F_2 = x^{d_2-1} y + \alpha u^2 + u b + g,
\]
where $\alpha \in \{0,1\}$ and $a, b, f, g \in \mbC [x,y,z,s,t]$ with $a (\msp) \ne 0$, $f, g \in (y,z,s,t)^2$ and $b \in (y,z,s,t)$.
Set $\bar{a} = a (1,y,z,s,t)$ and similarly for $\bar{b}, \bar{f}, \bar{g}$.
By setting $x = 1$, passing to the completion of $\mcO_{X, \msp}$ and eliminating $u = \bar{a}^{-1} \bar{f}$, the germ $(X,\msp)$ is isomorphic to the origin of the hypersurface defined in $\mbA^4_{y,z,s,t}$ by the equation
\[
y + \alpha \tilde{a}^2 \tilde{f}^2 + \tilde{a}^{-1} \tilde{b} \tilde{f} + \tilde{g} = 0,
\]  
and $T_{\msp} = H_y$ corresponds to $y = 0$.
By Tables \ref{table:codim2Fanos-i} and \ref{table:codim2Fanos-ii}, we have $2 a_4 \le d_2$, which means that the quadratic part of $\tilde{g}$ is a general quadric in variables $y,z,s,t$.
By filtering off the terms divisible by $y$ we have
\[
(-1 + \cdots) y = \alpha \bar{a}^2 \bar{f}^2 + \bar{a}^{-1} \bar{b} \bar{f} + \bar{g},
\]
where $\bar{a} = \tilde{a} (1,0,z,s,t)$ and similarly for $\bar{b}, \bar{f}, \bar{g}$.
The quadratic part of the right-hand side, denoted by $q$, of the above equation is that of $\bar{g}$, which is a general quadric in variables $z,s,t$.
Thus the projectivised tangent cone $(q = 0) \subset \mbP^2_{z,s,t}$ is nonsingular.
Therefore, by \cite[Lemma 8.10]{Kol}, we have $\lct (X, T_{\msp}) = 1$.
\end{proof}

\subsection{LCT along the flopping curves} \label{sec:flop}

Let 
\[
X = X_{d_1, d_2} \subset \mbP := \mbP (1,a_1,\dots,a_5)
\] 
be a member of family No.~$\msi$ with $\msi \in \IF$ and $\msp \in X$ a distinguished singular point.

\begin{Lem} \label{lem:coordflopcurve}
We can choose homogeneous coordinates $x, x_{i_1}, x_{i_2}, x_{j_1}, x_{j_2}, x_k$ of $\mbP$, where $\deg x = 1, \deg x_{i_l} = a_{i_l}, \deg x_{j_l} = a_{j_l}, \deg x_k = a_k$ with $\{i_1,i_2,j_1,j_2,k\} = \{1,2,3,4,5\}$, such that  the following assertions hold.
\begin{enumerate}
\item Defining polynomials $F_1, F_2$ of $X$ with $\deg F_1 = d_1 < \deg F_2 = d_2$ can be written as
\[
F_1 = x_k x_{j_1} + G_1 (x,x_{i_1},x_{i_2},x_{j_1},x_{j_2}), \quad
F_2 = x_k x_{j_2} - G_2 (x,x_{i_1},x_{i_2},x_{j_1}),
\]
for some homogeneous polynomials $G_1, G_2$.
\item $a_{i_1} < a_{i_2} < a_k$.
\item $\msp = \msp_{x_k} \in X$ is of type $\frac{1}{a_k} (1,a_{i_1},a_{i_2})$ and we have $\wprod (\msp) = a_{i_1} a_{i_2}$.
\end{enumerate}
\end{Lem}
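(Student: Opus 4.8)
The plan is to first put $\msp$ at a coordinate vertex and then read off the required monomials from quasi-smoothness, after which the normal form is produced by three explicit coordinate changes. By the remark preceding the statement we may assume $\msp = \msp_{x_k}$ for the coordinate $x_k$ of weight $a_k > 1$ corresponding to the singular point, and that the point $\hat{\msp}$ of the affine cone $C_X$ lying over $\msp$ is $(0, \dots, 0, 1, 0)$ with the $1$ in the $x_k$-slot. Since $\msp \in X$, the pure powers $x_k^{d_1/a_k}$ and $x_k^{d_2/a_k}$ (when integral) have zero coefficient, so $\partial F_\ell / \partial x_k$ vanishes at $\hat{\msp}$; and for $m \ne k$ the partial $\partial F_\ell / \partial x_m (\hat{\msp})$ is, up to a nonzero scalar, the coefficient in $F_\ell$ of the monomial $x_k^{(d_\ell - a_m)/a_k} x_m$, which can be nonzero only when $a_m \equiv d_\ell \pmod{a_k}$.

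First I would use quasi-smoothness at $\msp$, i.e.\ the linear independence of $dF_1, dF_2$ at $\hat{\msp}$, to identify the four indices. For each family and each distinguished point one checks from the data of Tables \ref{table:codim2Fanos-i} and \ref{table:codim2Fanos-ii} that there is a \emph{unique} index $j_\ell \ne k$ with $a_{j_\ell} \equiv d_\ell \pmod{a_k}$ and that it in fact satisfies $a_{j_\ell} = d_\ell - a_k$; consequently the only candidate for a nonzero entry of the Jacobian row $dF_\ell$ is $\partial F_\ell/\partial x_{j_\ell}$, and linear independence of $dF_1, dF_2$ forces both coefficients of $x_k x_{j_1}$ in $F_1$ and of $x_k x_{j_2}$ in $F_2$ to be nonzero with $j_1 \ne j_2$. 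This simultaneously pins down $\{i_1, i_2\} = \{1,\dots,5\} \setminus \{j_1, j_2, k\}$. Eliminating the two local coordinates $x_{j_1}, x_{j_2}$ (possible precisely because these coefficients are nonzero) shows that the germ $(X, \msp)$ is the cyclic quotient with local weights $(1, a_{i_1}, a_{i_2})$ modulo $a_k$; comparing with the basket in the tables gives the type $\tfrac{1}{a_k}(1, a_{i_1}, a_{i_2})$, the ordering $a_{i_1} < a_{i_2} < a_k$, and hence $\wprod(\msp) = a_{i_1} a_{i_2}$, which is assertions (2) and (3).

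For assertion (1) I would normalize in three steps. After rescaling $x_{j_1}, x_{j_2}$ (and, if needed, $F_1, F_2$) the coefficients of $x_k x_{j_1}$ and $x_k x_{j_2}$ become $1$ and $-1$. Every remaining monomial of $F_\ell$ divisible by $x_k$ equals $x_k$ times a monomial of degree $a_{j_\ell}$ different from $x_{j_\ell}$; substituting $x_{j_\ell} \mapsto x_{j_\ell} + (\text{the sum of these cofactors})$ absorbs them, so that afterwards $F_1 = x_k x_{j_1} + G_1$ and $F_2 = x_k x_{j_2} - G_2$ with $G_1, G_2$ free of $x_k$. It remains to remove $x_{j_2}$ from $G_2$. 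Writing the $x_{j_2}$-divisible part of $G_2$ as $x_{j_2} R$, each such monomial $x_{j_2}^p m$ has $\deg(x_{j_2}^{p-1} m) = a_k$, so $R$ is a genuine weight-$a_k$ polynomial free of $x_k$; the substitution $x_k \mapsto x_k + R$ then yields $F_2 = x_k x_{j_2} - G_2^{\circ}$ with $G_2^{\circ} = G_2^{\circ}(x, x_{i_1}, x_{i_2}, x_{j_1})$, while its only effect on $F_1$ is to add $R\, x_{j_1}$ to $G_1$, which is still free of $x_k$ (and may legitimately involve $x_{j_2}$). This is exactly the asserted form.

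The main obstacle is the case-by-case numerical input feeding the first two paragraphs: one must verify, for every family in $\IF$ and every point marked $\mathrm{d}$ in the tables, that the congruence $a_m \equiv d_\ell \pmod{a_k}$ has a unique solution $m = j_\ell$ with $a_{j_\ell} = d_\ell - a_k$, and that the two leftover weights are honestly smaller than $a_k$ so as to give the stated singularity type and the strict chain $a_{i_1} < a_{i_2} < a_k$. This is pure bookkeeping against Tables \ref{table:codim2Fanos-i} and \ref{table:codim2Fanos-ii} and the link construction of \cite[Section 4.2]{Okada1}; the conceptual content---that quasi-smoothness forces the two ``linear'' monomials $x_k x_{j_1}, x_k x_{j_2}$ and that the cofactor $R$ above always has the correct degree $a_k$---is uniform and requires no further hypotheses.
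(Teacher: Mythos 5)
Your proposal takes a genuinely different route from the paper: the paper disposes of assertion (1) by citing \cite[Lemma 4.1]{Okada1} and of (2) by inspecting weights, whereas you try to rederive (1) from quasi-smoothness at $\msp$ plus a numerical uniqueness claim. That claim is false, and this is where the proof breaks. You assert that for every family and every distinguished point there is a \emph{unique} index $j_\ell \ne k$ with $a_{j_\ell} \equiv d_\ell \pmod{a_k}$, automatically satisfying $a_{j_\ell} = d_\ell - a_k$. Counterexamples: (i) for families No.~40, 54, 61 both weight-one coordinates solve the congruence for $F_1$ (e.g.\ No.~54 has $\mbP (1,1,3,4,7,11)$, $a_k = 11$, $d_1 = 12 \equiv 1$); this one is harmless, being fixable by a linear change among $x, y$. (ii) For No.~52 and No.~63 at their weight-$7$ distinguished points, $F_2$ admits the extra candidate $x_k^2 x$, since $d_2 = 15 = 2 \cdot 7 + 1$; here quasi-smoothness at $\msp$ alone does \emph{not} force $x_k x_{j_2} \in F_2$ --- the monomial $tu$ is forced only by quasi-smoothness at the \emph{other} distinguished point $\msp_u$, which your argument never invokes. (iii) For No.~57, $X_{12,14} \subset \mbP (1,2,3,5,7,9)$, at the point $\msp_s$ of type $\frac{1}{5}(2,3)$, the congruence for $F_1$ has the two solutions $st$ and $s^2 y$.

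Case (iii) is fatal to the approach, not merely to the bookkeeping. A member of No.~57 with $s^2 y \in F_1$, $st \notin F_1$ and $su, t^2 \in F_2$ is quasi-smooth (at $\msp_s$ the two Jacobian rows are supported in the $y$- and $u$-columns, and the $s$-$t$ line meets $X$ only at $\msp_s$), yet \emph{no} homogeneous coordinate change realizes assertion (1) there: any new weight-$7$ coordinate has the form $c t + p(x,y,z,s)$ with $c \ne 0$, and since the lower-weight coordinates rewrite without $t$- or $u$-terms, the coefficient of $s' t'$ in $F_1$ remains a nonzero constant times the coefficient of $st$, i.e.\ zero. So assertion (1) is simply not a consequence of quasi-smoothness; it requires the standing generality hypotheses --- Condition \ref{cd}(2) (Okada's Condition 3.1), whose shadow is visible in Table \ref{table:Lxy-ii}, where ``$ts \in F_1$'' is listed as a condition for No.~57 --- or, equivalently, the hypothesis that $\msp$ actually initiates the Sarkisov link, which is precisely what \cite[Lemma 4.1]{Okada1} encodes and what the paper's proof cites. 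Your proof never appeals to either. Two lesser issues: the lemma is stated for all $\msi \in \IF$, while Tables \ref{table:codim2Fanos-i} and \ref{table:codim2Fanos-ii} cover only $\IFi \cup \IFii$, so the proposed table check cannot even be carried out from this paper's data; and your one-shot substitution $x_{j_\ell} \mapsto x_{j_\ell} + (\text{cofactors})$ must be iterated when $d_\ell \ge 2 a_k$, because the cofactors can themselves involve $x_k$ and re-enter through the remaining monomials (this part is standard and repairable).
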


\begin{proof}
(1) is proved in \cite[Lemma 4.1]{Okada1}.
For most of the distinguished singular points, the weight $a_k$ is the maximum among $a_0,\dots,a_5$ so that (2) obviously follows.
For the remaining case, (2) follows from the table in \cite[Section 9]{Okada1}.
(3) follows immediately from (1) and (2).
\end{proof}


Under the above choice of coordinates, we say that 
\[
T_1 := H_{x_{j_1}} \quad \text{and} \quad T_2 := H_{x_{j_2}}
\] 
are the {\it first} and {\it second tangent divisors} of $X$ at $\msp$, respectively.
Note that 
\[
\Upsilon_{\msp} = \Exc (\pi_{x_k}) = T_1 \cap T_2.
\]

\begin{Lem} \label{lem:normalT1}
The divisor $T_1$ has only isolated singularities along $T_1 \setminus \Upsilon_{\msp}$.
\end{Lem}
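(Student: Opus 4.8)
The plan is to eliminate the coordinate $x_k$ by projecting away from $\msp = \msp_{x_k}$, thereby identifying $T_1 \setminus \Upsilon_{\msp}$ with an open subset of a weighted hypersurface in a weighted projective $3$-space, and then to bound the dimension of the singular locus of that hypersurface. The point is that away from the flopping locus the second defining equation becomes a graph equation for $x_k$, so the whole problem collapses to a single hypersurface in fewer variables.

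First I would fix the coordinates of Lemma \ref{lem:coordflopcurve}, so that $F_1 = x_k x_{j_1} + G_1(x,x_{i_1},x_{i_2},x_{j_1},x_{j_2})$ and $F_2 = x_k x_{j_2} - G_2(x,x_{i_1},x_{i_2},x_{j_1})$, with $T_1 = H_{x_{j_1}}$, $T_2 = H_{x_{j_2}}$ and $\Upsilon_{\msp} = T_1 \cap T_2 = T_1 \cap (x_{j_2} = 0)$; in particular $T_1 \setminus \Upsilon_{\msp} = T_1 \cap (x_{j_2} \ne 0)$. Writing $\overline{G}_1 := G_1(x,x_{i_1},x_{i_2},0,x_{j_2})$ and $\overline{G}_2 := G_2(x,x_{i_1},x_{i_2},0)$, the divisor $T_1$ is the complete intersection $(\overline{G}_1 = x_k x_{j_2} - \overline{G}_2 = 0)$ inside $H_{x_{j_1}} \cong \mbP(1,a_{i_1},a_{i_2},a_{j_2},a_k)$. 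Since $\msp_{x_k}$ has $x_{j_2} = 0$, the projection $\pi_{x_k}$ is defined on $T_1 \cap (x_{j_2} \ne 0)$, and there the relation $x_k x_{j_2} = \overline{G}_2$ expresses $x_k$ as a regular function of the remaining coordinates. As $\overline{G}_1$ does not involve $x_k$, this shows that $\pi_{x_k}$ restricts to an \emph{isomorphism} from $T_1 \setminus \Upsilon_{\msp}$ onto $S \cap (x_{j_2} \ne 0)$, where $S := (\overline{G}_1 = 0) \subset \mbP(1,a_{i_1},a_{i_2},a_{j_2})$ is a genuine hypersurface in the $3$-space. It therefore suffices to prove that $S$ has only isolated singularities along $x_{j_2} \ne 0$.

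Next I would decompose $\Sing(S) \cap (x_{j_2} \ne 0)$ into its two standard sources. The first is $S \cap \Sing \mbP(1,a_{i_1},a_{i_2},a_{j_2})$: the singular locus of the ambient $3$-space is the union of the coordinate strata along which a common factor of the weights acts, and one checks from the weights in Tables \ref{table:codim2Fanos-i} and \ref{table:codim2Fanos-ii} — using well formedness of $X$ and the coprimality built into the singularity type $\frac{1}{a_k}(1,a_{i_1},a_{i_2})$ at $\msp$ — that this intersection meets $x_{j_2} \ne 0$ in at most finitely many quotient points. The second source is the non-quasi-smooth locus $\Sigma := (\partial \overline{G}_1 = 0)$, the common zero locus of the partial derivatives of $\overline{G}_1$; by the Euler relation $\Sigma \subset S$. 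As a cross-check that nothing is lost to $X$ itself, a direct Jacobian computation on $X$ shows that a point $q \in T_1 \cap (x_{j_2} \ne 0)$ is non-quasi-smooth on $T_1$ exactly when $q \in \Sigma$ (the $F_2$-row of the Jacobian is nonzero because $x_{j_2} \ne 0$, so only the $F_1$-row can drop), and that quasi-smoothness of $X$ forces $\partial_{x_{j_1}} F_1 = x_k + \partial_{x_{j_1}} G_1 \ne 0$ there; thus these are honest singularities produced by the section $x_{j_1}$, i.e. tangencies of $H_{x_{j_1}}$ with $X$, and not points where $X$ degenerates along a curve.

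Finally it remains to bound $\dim \Sigma$ on $x_{j_2} \ne 0$, and I expect this to be the main obstacle, since it is the only step that genuinely uses the shape of $\overline{G}_1 = F_1\lvert_{x_{j_1}=0}$ rather than formal incidence geometry. Invoking the generality of $X$ (Condition \ref{cd}), $\overline{G}_1$ is a general member of $|\mcO(d_1)|$ on $\mbP(1,a_{i_1},a_{i_2},a_{j_2})$; dehomogenizing in the affine chart $x_{j_2} \ne 0$ and counting, the four partials cut $\Sigma$ down to a finite set away from the base locus of $|\mcO(d_1)|$, so that $S$ is quasi-smooth, and hence has isolated singularities, on $x_{j_2} \ne 0$. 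The delicate part is that for small $d_1$ the system $|\mcO(d_1)|$ can acquire base points on $x_{j_2} \ne 0$ coming from the weights, which a general $\overline{G}_1$ cannot remove; these would have to be inspected family by family from the degree $d_1$ and the weights $(1,a_{i_1},a_{i_2},a_{j_2})$ read off the tables. By contrast, the ambient-singularity bookkeeping of the previous paragraph is routine, so the entire weight of the proof falls on verifying this one dimension bound for each of the listed families.
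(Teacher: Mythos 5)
Your reduction to the hypersurface $S = (\overline{G}_1 = 0) \subset \mbP(1,a_{i_1},a_{i_2},a_{j_2})$ via the graph equation $x_k x_{j_2} = \overline{G}_2$ is correct, and your locus $\Sigma = (\prt \overline{G}_1 = 0)$ is exactly the locus $\Xi$ that the paper's proof works with (restricted to $x_{j_2} \neq 0$). But the proposal has a genuine gap precisely where you admit ``the entire weight of the proof falls'': the finiteness of $\Sigma$ on $x_{j_2} \neq 0$ is never proved, only deferred to an unperformed family-by-family inspection, and the mechanism you propose for it cannot work. First, generality is not available: the lemma is stated in Section \ref{sec:flop} for an arbitrary member of family No.~$\msi \in \IF$, with quasi-smoothness as the only hypothesis in force, and the paper's proof uses nothing else. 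Second, and more seriously, the Bertini-type claim ``general $\overline{G}_1$ $\Rightarrow$ $S$ quasi-smooth on $x_{j_2} \neq 0$'' is false, because the base points you worry about really do occur in that chart. Take family No.~42: $X_{10,12} \subset \mbP(1,1,4,5,6,6)$ with $\msp = \msp_u$, so in the notation of Lemma \ref{lem:coordflopcurve} we have $(x_{i_1},x_{i_2},x_{j_1},x_{j_2}) = (y,s,z,t)$ and $S \subset \mbP(1,1,5,6)$ has degree $10$. There is no monomial of degree $10$ of the form $t^a$ or $v\,t^a$ with $v \in \{x,y,s\}$, so at $\msp_t$ (which lies in the chart $t \neq 0$, being the second $\tfrac{1}{6}(1,1,5)$ point of $X$) the form $\overline{G}_1$ and all four of its partials vanish for \emph{every} choice of coefficients: $S$ is never quasi-smooth on $x_{j_2} \neq 0$. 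Finiteness of $\Sigma$ at such a base point is exactly what requires an argument, and Bertini is silent there.

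The irony is that your ``cross-check'' paragraph already contains the missing idea. At a point of $\Sigma$ one has $x_{j_1} = 0$ and the four partials of $G_1$ with respect to $x, x_{i_1}, x_{i_2}, x_{j_2}$ vanish, so the whole gradient of $F_1$ on the affine cone reduces to the single entry $\prt F_1/\prt x_{j_1} = x_k + \prt G_1/\prt x_{j_1}$ (the $x_k$-entry is $x_{j_1} = 0$); quasi-smoothness of $X$ therefore forces $x_k + \prt G_1/\prt x_{j_1} \neq 0$ on $\Sigma$, as you observe. This is not a mere consistency check: it says $\Sigma$ is disjoint from the hypersurface $\bigl( x_k + \prt G_1/\prt x_{j_1} = 0 \bigr)$, which has positive degree $a_k$. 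Since every complete curve in a projective variety meets every hypersurface of positive degree, $\Sigma$ can contain no curve and is therefore finite. That two-line argument is the paper's proof; it is uniform across the families, needs no genericity, and replaces your entire final paragraph. (For the remaining source of singularities, note that away from the non-quasi-smooth locus a singular point of $T_1$ must lie on $T_1 \cap \Sing(\mbP) \subset \Sing(X)$, which is finite because $X$ is terminal; this also disposes of your ambient-stratum bookkeeping without any weight-by-weight check.)
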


\begin{proof}
We choose coordinates as in Lemma \ref{lem:coordflopcurve}.
We set
\[
\Xi := \left( \frac{\prt G_1}{\prt x} = \frac{\prt G_1}{\prt x_{i_1}} = \frac{\prt G_1}{\prt x_{i_2}} = \frac{\prt G_1}{\prt x_{j_2}} = 0 \right) \cap T_1,
\]
and claim that $\Xi$ is a finite set of points.
Indeed, if $\Xi$ contains a curve, then
\[
\Xi' := \Xi \cap \left( x_k + \frac{\prt G_1}{\prt x_{j_1}} = 0 \right) \ne \emptyset.
\]
By considering the Jacobi matrix of the affine cone of $X$, we see that $X$ is not quasi-smooth at each point of $\Xi'$.
This is a contradiction and $\Xi$ is a finite set of points.

Let $J_{T_1}$ be the Jacobi matrix of the affine cone of $T_1$.
Since $T_1 = (x_{j_1} = G_1 = x_k x_{j_2} - G_2 = 0)$, we have
\[
J_{T_1} =
\begin{pmatrix}
0 & 0 & 0 & 1 & 0 & 0 \\[1mm]
\frac{\prt G_1}{\prt x} & \frac{\prt G_1}{\prt x_{i_1}} & \frac{\prt G_1}{\prt x_{i_2}} & \frac{\prt G_1}{\prt x_{j_1}} & \frac{\prt G_1}{\prt x_{j_2}} & 0 \\[2mm]
- \frac{\prt G_2}{\prt x} & - \frac{\prt G_2}{\prt x_{i_1}} & - \frac{\prt G_2}{\prt x_{i_2}} & - \frac{\prt G_2}{\prt x_{j_1}} & x_k - \frac{\prt G_2}{\prt x_{j_2}} & x_{j_2}
\end{pmatrix}.
\]
On the open set $T_1 \setminus \Upsilon_{\msp}$, the section $x_{j_2}$ does not vanish.
Thus $J_{T_1} (\msq)$ is of rank $3$ for any point $\msq \in T_1 \setminus (\Upsilon_{\msp} \cup \Xi)$.
This completes the proof.
\end{proof}

We set
\[
\overline{\Upsilon}_{\msp} := (x_{j_1} = x_{j_2} = G_1 = G_2 = 0) \subset \overline{\mbP} := \mbP (1,a_{i_1}, a_{i_2}, a_{j_1}, a_{j_2}),
\]
which is the image of $\Upsilon_{\msp}$ via the projection $\pi_{x_k}$.
In the following we assume that $\overline{\Upsilon}_{\msp}$ consists of $e := (d_1 d_2)/(a_{i_1} a_{i_2})$ distinct nonsingular points of $\overline{\mbP}$, which is the case when $X$ is general, and this assumption is equivalent to Condition \ref{cdsp}(3).
Then we have
\[
T_1 \cdot T_2 = \Gamma_1 + \cdots + \Gamma_e,
\]
where 
\begin{itemize}
\item each $\Gamma_l$ is a nonsingular rational curve such that $(A \cdot \Gamma_l) = 1/a_k$,
\item $\Gamma_l \cap \Gamma_m = \{\msp\}$ for $l \ne m$, and
\item $\Upsilon_{\msp} = \Exc (\pi_{x_k}) = \cup_{l=1}^e \Gamma_l$.
\end{itemize}

\begin{Lem}
Let $\msp \in X$ be a distinguished singular point of a member $X$ of family No.~$\msi$ with $\msi \in \IF$.
Assume that $\overline{\Upsilon}_{\msp}$ consists of $e := (d_1 d_2)/(a_{i_1} a_{i_2})$ distinct nonsingular points of $\overline{\mbP}$.
Then the following assertions hold.
\begin{enumerate}
\item The surface $T_1$ is normal, and it is nonsingular along $\Upsilon_{\msp} \setminus \{\msp\}$.
\item For distinct $l, m \in \{1,\dots,e\}$, we have
\[
(\Gamma_l \cdot \Gamma_m)_{T_1} = \frac{a_{i_1} a_{i_2}}{d_1 a_k}.
\]
\item For $l \in \{1,\dots,e\}$, we have
\[
(\Gamma_l^2)_{T_1} = - 1 + \frac{a_{i_1} a_{i_2}}{d_1 a_k}.
\]
\end{enumerate}
\end{Lem}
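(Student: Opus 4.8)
The plan is to exploit the explicit equations from Lemma \ref{lem:coordflopcurve} to analyse the surface $T_1 = H_{x_{j_1}}$ directly. For normality in (1), I would invoke Serre's criterion: since $T_1$ is a hypersurface section in the Cohen--Macaulay variety $X$ (cut out by $x_{j_1} = 0$), it is automatically $S_2$, so normality reduces to showing $T_1$ is regular in codimension $1$, i.e.\ that $\Sing (T_1)$ has dimension at most $0$. By Lemma \ref{lem:normalT1} the singular locus of $T_1$ away from $\Upsilon_{\msp}$ is already finite, so it remains to control the singularities of $T_1$ along $\Upsilon_{\msp}$ itself. Setting $x_{j_1} = 0$ in the equations, the curve $\Upsilon_{\msp} = T_1 \cap T_2$ is cut out on $T_1$ by $x_{j_2} = G_1(x,x_{i_1},x_{i_2},0,x_{j_2}) = G_2(x,x_{i_1},x_{i_2},0) = 0$, and I would use the generality assumption that $\overline{\Upsilon}_{\msp}$ consists of $e$ distinct nonsingular points of $\overline{\mbP}$ to verify, via the Jacobian matrix $J_{T_1}$ displayed in the proof of Lemma \ref{lem:normalT1}, that $T_1$ is nonsingular at each point of $\Upsilon_{\msp} \setminus \{\msp\}$. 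Since $\msp$ itself is an isolated point, the whole singular locus is finite, giving both normality and the nonsingularity statement in (1).

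For the intersection numbers in (2) and (3), the strategy is to compute on the normal surface $T_1$ using the $\mbQ$-divisor $A|_{T_1}$ as the organising tool. I would first record that $A|_{T_1} \sim_{\mbQ} T_2|_{T_1} / a_{j_2}$ and that $T_2|_{T_1} = \sum_{l=1}^e \Gamma_l$, so that for each fixed $l$,
\[
\left(\Gamma_l \cdot \sum_{m} \Gamma_m\right)_{T_1} = (\Gamma_l \cdot T_2)_{T_1} = a_{j_2} (A \cdot \Gamma_l) = \frac{a_{j_2}}{a_k},
\]
using $(A \cdot \Gamma_l) = 1/a_k$ from the bulleted list. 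The distinct curves $\Gamma_m$ for $m \neq l$ all meet $\Gamma_l$ only at $\msp$, and by symmetry of the configuration each pairwise intersection $(\Gamma_l \cdot \Gamma_m)_{T_1}$ is the same number, say $\iota$. The plan is then to compute the single global number $(\Gamma_l \cdot \Gamma_m)_{T_1}$ for $l \neq m$ by a separate direct local or degree computation and to read off $(\Gamma_l^2)_{T_1}$ from the displayed relation $(\Gamma_l^2)_{T_1} + (e-1)\iota = a_{j_2}/a_k$, which rearranges to the claimed values once one confirms $e - 1 = (d_1 d_2)/(a_{i_1}a_{i_2}) - 1$ and $\iota = a_{i_1}a_{i_2}/(d_1 a_k)$.

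To pin down $\iota = (\Gamma_l \cdot \Gamma_m)_{T_1}$ itself, I would compute the local intersection multiplicity at the quotient singularity $\msp \in T_1$ of type governed by $\frac{1}{a_k}(1,a_{i_1},a_{i_2})$. On the index-one cover, or equivalently after the Kawamata blowup $\varphi \colon Y \to X$, the proper transforms $\tilde\Gamma_l$ become disjoint, so all the intersection is concentrated at $\msp$ and can be extracted from the self-intersection of the exceptional data; the factor $a_{i_1}a_{i_2}$ in the numerator is exactly $\wprod(\msp)$ from Lemma \ref{lem:coordflopcurve}(3), which strongly suggests that $\iota$ arises as $\wprod(\msp)/(d_1 a_k)$ via the local contribution at the cyclic quotient point. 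The main obstacle will be precisely this local computation at the singular point $\msp$: one must correctly account for the $\frac{1}{a_k}$-quotient structure of $T_1$ at $\msp$ and the way the $e$ branches $\Gamma_l$ sit inside it, rather than naively intersecting on the ambient $X$. Getting the right normalisation — keeping track of the degree $d_1$ coming from the equation $G_1$ cutting out $T_1$ and the index $a_k$ from the quotient — is where the delicate bookkeeping lies, and I would verify the final formulas against the consistency check that $\sum_l (\Gamma_l \cdot T_2)_{T_1}$ recovers the total $(T_1 \cdot T_2 \cdot A)$-type degree on $X$.
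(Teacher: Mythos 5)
Your proposal follows essentially the same route as the paper's proof: part (1) is argued exactly as you describe (the Jacobian of $\overline{\Upsilon}_{\msp}\subset\overline{\mbP}$ has rank $2$ by the generality assumption, giving nonsingularity of $T_1$ along $\Upsilon_{\msp}\setminus\{\msp\}$, which combined with Lemma \ref{lem:normalT1} yields a finite singular locus and hence normality), and parts (2)--(3) are obtained from the Kawamata blowup with disjoint proper transforms $\tilde{\Gamma}_l$ together with the relation $(T_2|_{T_1}\cdot\Gamma_l)_{T_1}=a_{j_2}/a_k$, just as you outline. The local computation you flag as the main obstacle is carried out in the paper by first computing $(F^2)_{\tilde{T}_1}=-d_1a_k/(a_{i_1}a_{i_2})$ from $\varphi^*T_1=\tilde{T}_1+\frac{d_1}{a_k}E$ and $(E^3)=a_k^2/(a_{i_1}a_{i_2})$, then using transversality of $\tilde{\Gamma}_l$ with $F$ and $(\varphi^*\Gamma_l\cdot F)_{\tilde{T}_1}=0$ to get $\varphi^*\Gamma_l=\tilde{\Gamma}_l+\frac{a_{i_1}a_{i_2}}{d_1a_k}F$, from which $(\Gamma_l\cdot\Gamma_m)_{T_1}=\frac{a_{i_1}a_{i_2}}{d_1a_k}$ follows directly and uniformly in $l,m$ (so the symmetry you posit is a consequence, not an assumption).
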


\begin{proof}
We choose coordinates as in Lemma \ref{lem:coordflopcurve}.
The assumption on $\overline{\Upsilon}_{\msp}$ implies the the following Jacobian matrix of $\overline{\Upsilon}_{\msp} \subset \overline{\mbP}$, 
\[
\begin{pmatrix}
\frac{\prt \bar{G}_1}{\prt x} & \frac{\prt \bar{G}_1}{\prt x_{j_1}} & \frac{\prt \bar{G}_1}{\prt x_{j_2}} \\[2mm]
\frac{\prt \bar{G}_2}{\prt x} & \frac{\prt \bar{G}_2}{\prt x_{j_1}} & \frac{\prt \bar{G}_2}{\prt x_{j_2}} 
\end{pmatrix}
\]
is of rank $2$ at any point of $\overline{\Upsilon}_{\msp} \subset \mbP (1,a_{i_1}, a_{i_2})$, where
\[
\bar{G}_1 = G_1 (x, x_{i_1}, x_{i_2}, 0, 0), \quad \text{and} \quad
\bar{G}_2 = G_2 (x,x_{i_1}, x_{i_2}, 0).
\]
From this we deduce that the surface $T_1$ is nonsingular along $\Upsilon_{\msp} \setminus \{\msp\}$.
Combining this with Lemma \ref{lem:normalT1}, we conclude that $T_1$ has only isolated singular points and thus it is normal.

Let $\varphi \colon Y \to X$ be the Kawamata blowup of $X$ at $\msp$ with exceptional divisor $E$ and let $\psi \colon \tilde{T}_1 \to T_1$ be the restriction of $\varphi$ to the proper transform $\tilde{T}_1$ of $T_1$.
We set $F = E|_{\tilde{T}_1}$.
Note that $\varphi^* T_1 = \tilde{T}_1 + \frac{d_1}{a_k} E$.
Hence we have
\[
(F^2)_{\tilde{H}_1} 
= (E^2 \cdot \varphi^*T_1 - \tfrac{d_1}{a_k} E)_Y 
= - \frac{d_1 a_k}{a_{i_1} a_{i_2}}.
\]

For $l = 1,\dots,e$, we write $\varphi^* \Gamma_l \equiv \tilde{\Gamma}_l + c_l F$
and compute the number $c_l$ by taking intersection with $F$.
Since $\tilde{\Gamma}_l$ intersects $F$ transversally at one point which is a nonsingular point of both $\tilde{\Gamma}_l$ and $F$, we have
\[
0 = (\varphi^*\Gamma_l \cdot F)_{\tilde{T}_i} 
= (\tilde{\Gamma}_l \cdot F)_{\tilde{T}_1} + c_l (F^2)_{\tilde{T}_1} 
= 1 - \frac{d_1 a_k}{a_{i_1} a_{i_2}}c_l,
\]
which shows
\[
\varphi^* \Gamma_l = \tilde{\Gamma}_l + \frac{a_{i_1} a_{i_2}}{d_1 a_k} F.
\]
For $l \ne m$, we have $\tilde{\Gamma}_l \cap \tilde{\Gamma}_m = \emptyset$ and hence
\[
(\Gamma_l \cdot \Gamma_m)_{T_1} 
= (\psi^*\Gamma_l \cdot \tilde{\Gamma}_m)_{\tilde{T}_1} 
= \frac{a_{i_1} a_{i_2}}{d_1 a_k},
\]
and (2) is proved.

We have
\[
\frac{a_{j_2}}{a_k} 
= (T_2|_{T_1} \cdot \Gamma_l)_{H_1} 
= (\Gamma_l)^2_{H_1} + \sum_{m \ne l} (\Gamma_l \cdot \Gamma_m)_{H_1}
= (\Gamma^2_l)_{H_1}
 + (e-1) \frac{a_{i_1} a_{i_2}}{d_1 a_k}.
 \]
This, together with $e = (d_1 d_2)/(a_{i_1} a_{i_2})$ and $d_2 = a_k + a_{j_2}$, shows (3).
\end{proof}

\begin{Prop} \label{prop:lctfflopcurve}
Let $\msp \in X$ be a distinguished singular point of a member $X$ of family No.~$\msi \in \IF$.
Assume that $\wprod (\msp) \ge d_1$ and that $\overline{\Upsilon}_{\msp}$ consists of $e := (d_1 d_2)/(a_{i_1} a_{i_2})$ distinct nonsingular points of $\overline{\mbP}$.
Then $\lct_{\msq} (X) \ge 1$ for any nonsingular point $\msq \in \Upsilon_{\msp}$ of $X$.
\end{Prop}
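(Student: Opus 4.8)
The plan is to fix an arbitrary nonsingular point $\msq \in \Upsilon_{\msp}$ and establish $\lct_{\msq}(X) \ge 1$ by feeding divisors adapted to the flopping curves into one of the two local criteria, Lemma \ref{lem:exclL} or Lemma \ref{lem:criwisol}. First I would choose coordinates as in Lemma \ref{lem:coordflopcurve}, so that $\msq$ lies on exactly one component $\Gamma = \Gamma_l$ of $\Upsilon_{\msp} = T_1 \cap T_2 = \Gamma_1 + \cdots + \Gamma_e$, with $\msq \ne \msp$ (since $\msp$ is singular). Because distinct $\Gamma_l, \Gamma_m$ meet only at $\msp$, in a neighbourhood of $\msq$ the intersection $T_1 \cap T_2$ is the single curve $\Gamma$, which is smooth at $\msq$ with $(A \cdot \Gamma) = 1/a_k$. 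I record $T_1 \sim_{\mbQ} a_{j_1} A$, $T_2 \sim_{\mbQ} a_{j_2} A$, $a_{j_1} = d_1 - a_k$, $a_{j_2} = d_2 - a_k$, $\wprod(\msp) = a_{i_1} a_{i_2}$ and $(A^3) = d_1 d_2 / (a_{i_1} a_{i_2} a_{j_1} a_{j_2} a_k)$. By Remark \ref{rem:covex} it suffices to show that an irreducible $D \sim_{\mbQ} A$ cannot fail to be log canonical at $\msq$. I would then split into two cases according to the size of $a_{j_1} a_{j_2} (A^3)$.

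\textbf{Case 1:} $a_{j_1} a_{j_2} (A^3) \le 1$. Here I apply Lemma \ref{lem:exclL} at the nonsingular point $\msq$ (so $r = 1$) with the prime divisors $S_1 = T_1$ and $S_2 = T_2$, whence $c_1 = a_{j_1}$ and $c_2 = a_{j_2}$. Property (2) holds because $T_1$ is normal and smooth at $\msq$ by the preceding lemma, so $(X, \tfrac{1}{a_{j_1}} T_1)$ is log canonical there; property (3) holds because near $\msq$ the scheme $T_1 \cap T_2$ equals the reduced irreducible curve $\Gamma$ with $\mult_{\msq} \Gamma = 1 \le a_{j_2}$; and property (4), $a_{j_1} a_{j_2} (A^3) \le 1$, is exactly the hypothesis of this case. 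This gives $\lct_{\msq}(X) \ge 1$.

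\textbf{Case 2:} $a_{j_1} a_{j_2} (A^3) > 1$. Inspecting the families with $\wprod(\msp) \ge d_1$ in Tables \ref{table:codim2Fanos-i} and \ref{table:codim2Fanos-ii}, I expect to verify that in this regime one always has $a_1 a_k (A^3) \le 1$ and $a_{j_2} \le a_k$. I then apply Lemma \ref{lem:criwisol} with $\Gamma$ as above ($d = 1/a_k$, $m = \mult_{\msq}\Gamma = 1$) and with $S = T_{\msq}$, the tangent divisor of degree $a_1$ at $\msq$, so $c = a_1$. Property (1) follows from $\mult_{\msq} T_{\msq} \le 2 \le a_1$ (Lemma \ref{lem:multtang}), which makes $(X, \tfrac{1}{a_1} T_{\msq})$ log canonical at $\msq$. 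For the isolating class I use $x_{j_1}, x_{j_2}$: their common zero locus on $X$ is $\Upsilon_{\msp}$, whose only component through $\msq$ is $\Gamma$, so by Lemma \ref{lem:isolsetclass} the class $l A$ with $l = a_{j_2}$ is $(\msq, \Gamma)$-isolating. Condition (2-a) then reads $l d = a_{j_2}/a_k \le 1$ and $c m (A^3) = a_1 (A^3) \le 1/a_k = d$, both of which hold, and again $\lct_{\msq}(X) \ge 1$. Since $\msq$ was an arbitrary nonsingular point of $\Upsilon_{\msp}$, the proposition follows.

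The hardest part will be the rigorous verification of property (3) of Lemma \ref{lem:exclL} together with the isolating-set assertion, namely that near $\msq$ the intersection $T_1 \cap T_2$ really is the single reduced curve $\Gamma$ and that $x_{j_1}, x_{j_2}$ cut out $\Upsilon_{\msp}$ with $\Gamma$ its unique component through $\msq$; this rests on the standing assumption that $\overline{\Upsilon}_{\msp}$ consists of $e$ distinct nonsingular points. The remaining obstacle is the finite numerical check, using $\wprod(\msp) \ge d_1$ and the explicit weights, that one of the two cases always applies and that $a_1 a_k (A^3) \le 1$ and $a_{j_2} \le a_k$ hold whenever $a_{j_1} a_{j_2} (A^3) > 1$.
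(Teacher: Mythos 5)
Your Case 1 is essentially correct: when $a_{j_1}a_{j_2}(A^3)\le 1$, taking $S_1=T_1$, $S_2=T_2$ in Lemma \ref{lem:exclL} works as you describe. The fatal problem is that Case 1 does not cover the full range of the hypothesis, and your Case 2 rests on numerical claims that are false. Since the ambient weights are $1,a_{i_1},a_{i_2},a_{j_1},a_{j_2},a_k$, one has
\[
a_{j_1}a_{j_2}(A^3)=\frac{d_1d_2}{a_{i_1}a_{i_2}a_k}=\frac{d_1d_2}{\wprod(\msp)\,a_k},
\]
so Case 1 applies exactly when $\wprod(\msp)\ge d_1d_2/a_k$, which is strictly stronger than the assumed $\wprod(\msp)\ge d_1$ because $d_2>a_k$. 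The borderline cases occur and are needed: for family No.~50, $X_{10,14}\subset\mbP(1,2,3,5,7,7)$ with distinguished point of type $\frac{1}{7}(2,5)$, one has $\wprod(\msp)=10=d_1$, $(A^3)=2/21$, $a_{j_1}=3$, $a_{j_2}=7=a_k$, hence $a_{j_1}a_{j_2}(A^3)=2>1$, while $a_1a_k(A^3)=4/3>1$; moreover with your data $l=a_{j_2}=7$, $d=1/a_k=1/7$, $m=1$, $c=a_1=2$, condition (2-a) of Lemma \ref{lem:criwisol} fails ($cm(A^3)=4/21>1/7=d$) and so does (2-b) ($ld=1\not>m$). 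The same breakdown occurs for families No.~52, 53, 58, all of which lie in $\IFii$ and have $\wprod(\msp)\ge d_1$, so the Proposition must handle them. This is not repairable by cleverer choices within your framework: Lemmas \ref{lem:exclL} and \ref{lem:criwisol} trade the global number $(A^3)$ against degrees of auxiliary divisors, and precisely in the regime $d_1\le\wprod(\msp)<d_1d_2/a_k$ those products exceed $1$.

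What is missing is a local argument on the surface $T_1$ that uses the hypothesis $\wprod(\msp)\ge d_1$ sharply, and this is what the paper does. It first shows $(X,\frac{1}{a_{j_2}}T_2)$ is log canonical at $\msq$; assuming a bad irreducible $D\sim_{\mbQ}A$ exists (so $\Supp(D)\ne T_2$), it restricts to $T_1$ (normal, smooth at $\msq$), and applies the tie-breaking perturbation $D_{\varepsilon}=(1+\varepsilon)D-\frac{\varepsilon}{a_{j_2}}T_2$ to arrange that $D|_{T_1}$ contains the component $\Gamma_1$ through $\msq$ but omits some other $\Gamma_2$. Then the intersection numbers computed on $T_1$ via the Kawamata blowup, $(\Gamma_l\cdot\Gamma_m)_{T_1}=\frac{a_{i_1}a_{i_2}}{d_1a_k}$ and $(\Gamma_l^2)_{T_1}=-1+\frac{a_{i_1}a_{i_2}}{d_1a_k}$, give the coefficient bound $\gamma\le d_1/\wprod(\msp)\le 1$ by intersecting with $\Gamma_2$, and inversion of adjunction along $\Gamma_1$ produces an inequality equivalent to $d_1>\wprod(\msp)$, a contradiction. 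That argument is exactly calibrated to $\wprod(\msp)\ge d_1$ (it works with equality, as in families 50, 53, 58), which your global numerical approach cannot reach. A secondary issue: your Case 2 invokes $T_{\msq}$ and Lemma \ref{lem:multtang}, which import a generality hypothesis on $X$ and the restriction $\msq\notin H_x$, neither of which is part of the Proposition's assumptions.
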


\begin{proof}
Let $\msq \in \Upsilon_{\msp} = \cup \Gamma_l$ be a nonsingular point of $X$.
There exists a unique curve $\Gamma_l$ passing through $\msq$.
Without loss of generality, we may assume $\msq \in \Gamma_1$.
The pair $(T_1, \frac{1}{a_{j_2}} T_2|_{T_1})$ is log canonical at $\msq$ since $T_1$ is nonsingular along $\Upsilon_{\msp} \setminus \{\msp\} = (\cup \Gamma_l) \setminus \{\msp\}$, $\Gamma_1$ is nonsingular and $\msq \notin \Gamma_l$ for $l \ne 1$, and hence $(X, \frac{1}{a_{j_2}} T_2)$ is log canonical at $\msq$.

Suppose that $\lct_{\msq} (X) < 1$.
Then there exists an irreducible $\mbQ$-divisor $D \sim_{\mbQ} A$ such that $(X, D)$ is not log canonical at $\msq$ and $\Supp (D) \ne T_2$.
Then $D|_{T_1}$ is an effective $\mbQ$-divisor on $T_1$ and $(T_1, D|_{T_1})$ is not log canonical at $\msq$.
For $\varepsilon \in \mbQ$, we set 
\[
D_{\varepsilon} = (1 + \varepsilon) D - \frac{\varepsilon}{a_{j_2}} T_2 \sim_{\mbQ} A.
\]
We can find $\varepsilon \ge 0$ such that the $1$-cycle
\[
D_{\varepsilon} |_{T_1} = \varepsilon D|_{T_1} - \frac{\varepsilon}{a_{j_2}} \sum \Gamma_l.
\]
is effective and $\Gamma_l \not\subset \Supp (D_{\varepsilon}|_{T_1})$ for some $l \in \{1,\dots,e\}$.
Note that $(X, D_{\varepsilon}|_{T_1})$ is not log canonical at $\msq$ (see Remark \ref{rem:covex}).
If $\Gamma_1 \not\subset \Supp (D_{\varepsilon}|_{T_1})$, then
\[
\frac{1}{a_k} = (D|_{T_1} \cdot \Gamma_1)_{T_1} > 1.
\]
This is a contradiction.
Thus $\Gamma_l \not\subset \Supp (D_{\varepsilon}|_{T_1})$ for some $l \ne 1$.
After replacing $D$ with $D_{\varepsilon}$, we may assume that $(X, D|_{T_1})$ is not log canonical at $\msq$, $D|_{T_1}$ contains $\Gamma_1$ but does not contain $\Gamma_2$ in its support.

We write
\[
D|_{T_1} = \gamma \Gamma_1 + \Delta,
\]
where $\gamma > 0$ and $\Gamma_l \not\subset \Supp (\Delta)$ for $l = 1,2$.
By taking intersection with $\Gamma_2$, we have
\[
\frac{1}{a_k} = (D|_{T_1} \cdot \Gamma_2)_{T_1} 
\ge \gamma (\Gamma_1 \cdot \Gamma_2)_{T_1} 
= \gamma \frac{a_{i_1} a_{i_2}}{d_1 a_k},
\]
which implies
\begin{equation} \label{eq:exclflopcurves1}
\gamma \le \frac{d_1}{a_{i_1} a_{i_2}}=  \frac{d_1}{\wprod (\msp)}.
\end{equation}
Since $\wprod (\msp) \ge d_1$, we have $\gamma \le 1$.
Thus we can apply the inversion of adjunction formula and we have
\begin{equation} \label{eq:exclflopcurves2}
\begin{split}
\frac{1}{a_k} + \left(1 - \frac{a_{i_1} a_{i_2}}{d_1 a_k} \right) \gamma  
&= ((D|_{T_1} - \gamma \Gamma_1) \cdot \Gamma_1)_{T_1}  \\
&= (\Delta \cdot \Gamma_1)_{T_1} 
\ge \mult_{\msp} (\Delta|_{\Gamma_1}) 
> 1.
\end{split}
\end{equation}
Combining \eqref{eq:exclflopcurves1} and \eqref{eq:exclflopcurves2}, we obtain
\[
\frac{1}{a_k} + \left(1 - \frac{a_{i_1} a_{i_2}}{d_1 a_k} \right) \frac{d_1}{a_{i_1} a_{i_2}} > 1,
\]
which is equivalent to $d_1 > a_{i_1} a_{i_2} = \wprod (\msp)$.
This is a contradiction and the proof is completed.
\end{proof}


\subsection{LCT at some distinguished singular points}

\begin{Lem} \label{lem:somedistsingpt}
Let $\msp \in X$ be a distinguished singular point of a member $X$ of family No.~$\msi$ with $\msi \in \IF$.
We choose homogeneous coordinates as in \emph{Lemma \ref{lem:coordflopcurve}}.
If $H_x \cap \Upsilon_{\msp} = \{\msp\}$ and $a_k a_{j_2} (A^3) \le 2$, then $\lct_{\msp} (X) \ge 1$.
\end{Lem}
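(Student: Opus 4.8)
The plan is to argue by contradiction, following the strategy of Lemma \ref{lem:exclL} but extracting an extra factor of $2$ from the geometry of the second tangent divisor. Choosing coordinates as in Lemma \ref{lem:coordflopcurve}, so that $\msp = \msp_{x_k}$ is a quotient singularity of type $\frac{1}{a_k}(1, a_{i_1}, a_{i_2})$ and $T_2 = H_{x_{j_2}}$, I would work with the two prime divisors $H_x \sim_{\mbQ} A$ and $T_2 \sim_{\mbQ} a_{j_2} A$, both passing through $\msp$. Suppose $\lct_{\msp}(X) < 1$. By Remark \ref{rem:covex} there is an irreducible $D \sim_{\mbQ} A$ with $(X, D)$ not log canonical at $\msp$. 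Since $(X, H_x)$ is log canonical at $\msp$ ($H_x$ is cut out by the weight-one coordinate, hence is smooth on the index-one cover), one has $\Supp(D) \neq H_x$; and $\Supp(D) \neq T_2$ as well, the two lying in different divisor classes when $a_{j_2} \ge 2$, with the remaining case $a_{j_2} = 1$ to be checked directly on $T_2$.

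Let $\rho_{\msp} \colon \check{X} \to X$ be the index-one cover of a neighborhood of $\msp$, with $\check{\msp} = \rho_{\msp}^{-1}(\msp)$, so that $\check{X}$ is a smooth threefold germ with orbifold coordinates of weights $1, a_{i_1}, a_{i_2}$ and $\check{H}_x = \{x = 0\}$ is a smooth coordinate surface. The first key point is a multiplicity bound on $D$: since $(X, H_x)$ is plt at $\msp$ and $(X, H_x + D)$ is not log canonical at $\msp$, inversion of adjunction shows $(\check{H}_x, \check{D}|_{\check{H}_x})$ is not log canonical at $\check{\msp}$, whence $\mult_{\check{\msp}}(\check{D}|_{\check{H}_x}) > 1$. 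The second, and new, key point is that the curve $\check{\Gamma} := \check{H}_x \cap \check{T}_2$ satisfies $\mult_{\check{\msp}}(\check{\Gamma}) \ge 2$. Indeed, on $\check{H}_x \cong \mbC^2$ (coordinates $x_{i_1}, x_{i_2}$ of weights $a_{i_1} < a_{i_2}$) the divisor $\check{T}_2$ is cut out by the restriction of $x_{j_2}$, which by $F_2 = x_k x_{j_2} - G_2$ is, to leading order, a weighted-homogeneous function of weighted degree $d_2 = a_k + a_{j_2}$; since $d_2 > a_k > a_{i_2} = \max\{a_{i_1}, a_{i_2}\}$, this function carries no linear term, so its vanishing has multiplicity at least $2$ at $\check{\msp}$.

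The two bounds are combined by localising the triple intersection number at $\msp$. Using the degree of the index-one cover,
\[
a_{j_2}(A^3) = (D \cdot H_x \cdot T_2)_X \ge (D \cdot H_x \cdot T_2)_{\msp} = \frac{1}{a_k}\bigl(\check{D}|_{\check{H}_x} \cdot \check{\Gamma}\bigr)_{\check{\msp}} \ge \frac{1}{a_k}\,\mult_{\check{\msp}}(\check{D}|_{\check{H}_x})\,\mult_{\check{\msp}}(\check{\Gamma}) > \frac{2}{a_k},
\]
so $a_k a_{j_2}(A^3) > 2$, contradicting the hypothesis.

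The step that needs care — and which I expect to be the main obstacle — is the middle inequality $(\check{D}|_{\check{H}_x} \cdot \check{\Gamma})_{\check{\msp}} \ge \mult_{\check{\msp}}(\check{D}|_{\check{H}_x})\,\mult_{\check{\msp}}(\check{\Gamma})$, which is valid only when the two curves $\check{D}|_{\check{H}_x}$ and $\check{\Gamma}$ share no common component. A shared component would be the lift of a curve lying in $H_x \cap T_2 \cap \Supp(D)$, and here the hypothesis $H_x \cap \Upsilon_{\msp} = \{\msp\}$ enters: it guarantees that $H_x \cap T_2$ meets the flopping locus $\Upsilon_{\msp} = T_1 \cap T_2$ only at $\msp$, so any such common component would be a flopping curve, and these can be removed by the subtraction device of Proposition \ref{prop:lctfflopcurve} (replacing $D$ by a suitable $D_{\varepsilon} = (1+\varepsilon)D - \tfrac{\varepsilon}{a_{j_2}}T_2$ and peeling off the flopping curves from $D \cdot T_2$). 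Checking that this clearing procedure preserves both the non-log-canonicity at $\msp$ and the multiplicity-$2$ estimate, while keeping the triple intersection localised at $\msp$, is the technical heart of the argument.
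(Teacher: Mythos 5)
Your overall architecture (contradiction, index-one cover, multiplicity estimates, localisation of a triple intersection at $\msp$) is the same as the paper's, and you correctly identified the dangerous step: the bound $(\check{D}|_{\check{H}_x} \cdot \check{\Gamma})_{\check{\msp}} \ge \mult_{\check{\msp}}(\check{D}|_{\check{H}_x})\,\mult_{\check{\msp}}(\check{\Gamma})$ is meaningless if the two cycles share a component. But your treatment of this step is where the proof breaks. First, you misread what the hypothesis gives: a common component of $D|_{H_x}$ and $\Gamma = H_x \cap T_2$ lies inside $H_x$, and since $H_x \cap \Upsilon_{\msp} = \{\msp\}$ is finite, no curve contained in $H_x$ can be a flopping curve --- so such a component is \emph{never} a flopping curve, which is the opposite of what you assert, and the hypothesis by itself says nothing about whether $\Supp(D)$ contains a component of $H_x \cap T_2$ (it also does not exclude $D = \tfrac{1}{a_{j_2}}T_2$ itself, which your ``different divisor classes'' remark overlooks, since $D$ is only a $\mbQ$-divisor). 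Second, the subtraction device of Proposition \ref{prop:lctfflopcurve} does not transplant to your situation: there it runs on the surface $T_1$, where $T_2|_{T_1} = \sum \Gamma_l$ is a union of explicitly understood curves with computed mutual intersections, and it crucially uses that $(T_1, \tfrac{1}{a_{j_2}}T_2|_{T_1})$ is log canonical at the relevant point. On $H_x$ you would need $(H_x, \tfrac{1}{a_{j_2}}\Gamma)$ to be log canonical at $\msp$, which is not established (and is doubtful, as $\check{\Gamma}$ can be quite singular at $\check{\msp}$); moreover the device removes only one component of $\Gamma$ at a time, so remaining shared components still make the local intersection undefined. This is a genuine gap.

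The idea you are missing is a tie-breaking \emph{pencil}, which is how the paper resolves exactly this issue. Set $b = \lcm\{a_{j_1},a_{j_2}\}$, $b_1 = b/a_{j_1}$, $b_2 = b/a_{j_2}$, and let $T$ be cut out by $\alpha_1 x_{j_1}^{b_1} + \alpha_2 x_{j_2}^{b_2}$ for \emph{general} $\alpha_1,\alpha_2$, so $T \sim_{\mbQ} bA$. The base locus of this pencil is $(x_{j_1}=x_{j_2}=0)\cap X = \Upsilon_{\msp}$, and this is where the hypothesis actually enters: since $H_x \cap \Upsilon_{\msp} = \{\msp\}$ is finite, no component of the $1$-cycle $D \cdot H_x$ (well defined because $(X,H_x)$ is log canonical at $\msp$, so $\Supp(D) \ne H_x$) lies in the base locus, hence a general $T$ contains no component of $D \cdot H_x$ and the local intersection is legitimate. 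Both $x_{j_1}$ and $x_{j_2}$ vanish to order at least $2$ at $\check{\msp}$ (your own observation, applied to both $F_1$ and $F_2$), so $\mult_{\check{\msp}}(\check{T}) \ge \min\{2b_1,2b_2\} = 2b_2$, and then
\[
a_k b (A^3) = a_k (D \cdot H_x \cdot T) \ge (\check{D}\cdot\check{H}_x\cdot\check{T})_{\check{\msp}} > 1 \cdot 2b_2 = 2b/a_{j_2},
\]
which gives $a_k a_{j_2}(A^3) > 2$, contradicting the hypothesis. Your multiplicity estimates were sound; what was needed was to replace the fixed divisor $T_2$ by a general member of a pencil whose base locus is exactly the set the hypothesis controls.
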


\begin{proof}
Suppose that $\lct_{\msp} (X) < 1$.
Then there is an irreducible effective $\mbQ$-divisor $D \sim_{\mbQ} A$ such that $(X, D)$ is not log canonical at $\msp$.
In particular $(\check{X}_{\msp}, \check{D})$ is not log canonical at $\check{\msp}$, where $\rho_{\msp} \colon \check{X}_{\msp} \to X$ is the index one cover of $\msp \in X$ and $\check{D} = \rho_{\msp}^*D$.

Since $(X, H_x)$ is log canonical at $\msp$, we have $H_x \ne \Supp (D)$ and $D \cdot H_x$ is an effective $1$-cycle.
Set $b = \lcm \{a_{j_1}, a_{j_2}\}$, $b_1 = b/a_{j_1}$, $b_2 = b/a_{j_2}$ and let $T$ be the defined by $\alpha_1 x_{j_1}^{b_1} + \alpha_2 x_{j_2}^{b_2}$ for general $\alpha_1, \alpha_2$.
Since $H_x \cap (x_{j_1} = x_{j_2} = 0)$ consists of finite set of points, we may assume that $\Supp (T)$ does not contain any component of $D \cdot H_x$.
Since $\mult_{\check{\msp}} (\check{D}) > 1$ and $\mult_{\check{\msp}} (\check{T}) \ge 2 b_2$, where $\check{T} = \rho_{\msp}^* T$, we have
\[
a_k b (A^3) = a_k (D \cdot H_x \cdot T) \ge (\check{D} \cdot \check{H}_x \cdot \check{T})_{\check{\msp}} > 2 b_2 = 2 b/a_{j_2}.
\]
This is a contradiction.
Therefore $\lct_{\msp} (X) \ge 1$.
\end{proof}

\section{GLCT for families indexed by $\IFi \cup \IFii$} \label{sec:nonsingpts}

We compute GLCT of a member $X$ of family No.~$\msi$ with $\msi \in \IFi \cup \IFii$ and give proofs of Theorems \ref{thm:main1} and \ref{thm:main2} at the end of this section.

\subsection{Nonsingular points contained in $L_{xy}$}

For a member $X$ of family No.~$\msi \in \IFi \cup \IFii$, we have $a_1 < a_2$.
This implies that the $1$-dimensional scheme 
\[
L_{xy} := H_x \cap H_y
\] 
does not depend on the choice of homogeneous coordinates.

\begin{Lem} \label{lem:irredLxy}
Let $X$ be a member of family No.~$\msi$ with $\msi \in \IFi \cup \IFii$.
For $\msi \in \{40,43,50,52,53,67\} \subset \IFii$, we assume that the condition given in the $4$th column of \emph{Tables \ref{table:Lxy-ii}} is satisfied.
Then $L_{xy}$ is an irreducible and reduced curve such that $\Sing (L_{xy}) \subset \Sing (X)$.
\end{Lem}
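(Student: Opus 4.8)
The plan is to regard $L_{xy}$ as a codimension $2$ weighted complete intersection in the smaller weighted projective space $\overline{\mbP} := \mbP(a_2,a_3,a_4,a_5)$ with coordinates $z,s,t,u$. Since $a_1<a_2$, the divisors $H_x$ and $H_y$ are intrinsic, and setting $x=y=0$ in the defining equations exhibits $L_{xy}=(\bar F_1=\bar F_2=0)\subset\overline{\mbP}$, where $\bar F_i:=F_i(0,0,z,s,t,u)$ has degree $d_i$. I would then separate the assertion into three parts: (a) $L_{xy}$ has pure dimension $1$; (b) $\Sing(L_{xy})\subset\Sing(X)$; and (c) $L_{xy}$ is irreducible and reduced. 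For part (a) I would read off, family by family, the monomials of degrees $d_1$ and $d_2$ in $z,s,t,u$ and check that $\bar F_1,\bar F_2$ form a regular sequence for general $X$. This is exactly where the six families $\{40,43,50,52,53,67\}$ are delicate, because there the available monomials are so few that the regular-sequence property is fragile, and the open condition recorded in the fourth column of Table~\ref{table:Lxy-ii} is what restores it.

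For part (b) I would use the Jacobian criterion together with a genericity count in the spirit of the proof of Lemma~\ref{lem:multtang}. At a cone point $\hat q=(0,0,z_0,s_0,t_0,u_0)$ over a point $q\in L_{xy}$ with $q\notin\Sing X$, the affine cone of $L_{xy}$ is smooth at $\hat q$ precisely when the covectors $dx,dy,d\bar F_1,d\bar F_2$ are linearly independent, equivalently when the $2\times 4$ Jacobian of $(\bar F_1,\bar F_2)$ in the variables $z,s,t,u$ has rank $2$ at $\hat q$. Quasi-smoothness of $X$ only controls the full $2\times 6$ Jacobian (its $x$- and $y$-columns may carry the rank), so it does not by itself force this. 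Instead I would show that the set of $X$ admitting a point $q\in L_{xy}\setminus\Sing X$ at which this restricted rank drops is a proper closed subset of the parameter space of the family, by the same fibre-dimension count used for Lemma~\ref{lem:multtang}. Hence a general $X$ has $L_{xy}$ smooth, as a variety, at every point where $X$ is smooth, which is (b). Reducedness in (c) is then automatic: a weighted complete intersection is Cohen--Macaulay, so has no embedded points, and by (b) it is generically reduced, hence reduced.

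The main obstacle is the irreducibility in (c), which is the reason the six special families carry an extra hypothesis. The general mechanism is that $L_{xy}$, being a positive-dimensional well-formed weighted complete intersection, is connected; so once it is reduced with singularities confined to the finite set $L_{xy}\cap\Sing X$, the only way it can fail to be irreducible is to split into smooth branches crossing at one of those quotient singular points (any intersection point of two components would be a singular point of $L_{xy}$, hence in $\Sing X$). To exclude this I would argue directly from the explicit equations: eliminating a suitable higher-weight variable from $\bar F_2=0$ on the chart where it occurs realizes $L_{xy}$ as the hypersurface $\bar F_1=0$ inside a weighted plane, and inspection of the corresponding affine chart produces an irreducible, indeed rational, parametrization for general coefficients. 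For the families $40,43,50,52,53,67$ this parametrization degenerates, and the condition in Table~\ref{table:Lxy-ii} is exactly the open condition on the coefficients of $\bar F_1,\bar F_2$ ensuring that the residual equation still defines an irreducible curve, so that no such splitting occurs.

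Combining (a), (b) and (c) then yields that $L_{xy}$ is an irreducible and reduced curve with $\Sing(L_{xy})\subset\Sing(X)$. I expect the monomial bookkeeping and the verification of the table conditions to be the only laborious steps, while the conceptual content lies entirely in the Jacobian-plus-genericity argument of (b) and the connectedness-plus-parametrization argument of (c).
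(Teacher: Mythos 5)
Your skeleton (identify $L_{xy}$ with the complete intersection $(G_1=G_2=0)\subset\mbP(a_2,a_3,a_4,a_5)$, control its singular locus, then deduce irreducibility from connectedness plus an elimination/projection argument) is the same as the paper's, and your parts (a) and the reducedness/connectedness portions of (c) are fine. The genuine gap is in your part (b), and it propagates into (c): the lemma is \emph{not} a statement about a general member. It asserts the conclusion for every member of the family (every member is quasi-smooth by definition), subject only to the explicitly listed table condition for the six special families. Your proof of (b) replaces this with a fibre-dimension count in the parameter space, which at best shows the bad locus is a proper closed subset, i.e.\ it proves the statement only for a general $X$ with unspecified genericity hypotheses. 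The paper's proof (see Example \ref{ex:Lxyeq}) is deterministic: quasi-smoothness alone forces the coefficients of certain monomials of $G_1,G_2\in\mbC[z,s,t,u]$ to be nonzero --- e.g.\ for family No.~40, writing $G_1=\alpha t^2+\beta sz^2$ and $G_2=\gamma uz+\delta tsz+\varepsilon s^3+\lambda z^4$, quasi-smoothness gives $\alpha\gamma\varepsilon\ne 0$ and the table condition gives $\beta\ne 0$ --- and coordinate changes then reduce $(G_1,G_2)$ to the normal forms of Tables \ref{table:Lxy-i} and \ref{table:Lxy-ii}, from which $\Sing(L_{xy})$ is computed exactly and seen to consist of coordinate points lying in $\Sing(X)$. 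This is what makes the lemma true under its stated hypotheses, and the explicit output (the third column of the tables, and the multiplicities in Remark \ref{rem:multLxy}) is consumed later in Propositions \ref{prop:compLxy}, \ref{prop:lctsingpI} and \ref{prop:singmost}; a pure genericity statement would not feed those arguments.

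Moreover, the dimension count you invoke is not routine in this setting. Unlike Lemma \ref{lem:multtang}, where one varies all coefficients of $F_1,F_2$, here the only relevant parameters are the few coefficients of monomials in $z,s,t,u$ alone: for several families $G_1$ has just two or three monomials. With so few parameters, the five conditions you impose (two for $q\in L_{xy}$, three for the rank drop of the $2\times 4$ Jacobian) are far from independent at special points. For family No.~40 at $q=\msp_u$, for instance, $G_1(\msp_u)=0$ and the entire first row of the restricted Jacobian vanish identically, for \emph{every} member; this causes no contradiction only because $\msp_u\in\Sing(X)$, but to locate where such degenerations occur, and to check that they occur only at singular points of $X$, you must carry out precisely the stratum-by-stratum monomial analysis the genericity shortcut was meant to avoid. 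The same defect appears in the phrase ``for general coefficients'' in your parametrization step for irreducibility: the paper's projection argument (Example \ref{ex:Lxyirred}) applies to all members in normal form, e.g.\ $\pi_u(L_{xy})\cong(t^2+sz^2=0)\subset\mbP(3,4,5)$ for family No.~40, not only to general ones.
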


\begin{proof}
We write $F_1 = G_1 + H_1$ and $F_2 = G_2 + H_2$, where $G_j \in \mbC [z,s,t,u]$ and $H_j \in (x,y) \subset \mbC [x,y,\dots,u]$ for $j = 1,2$.
Then $L_{xy}$ is isomorphic to the closed subscheme defined by $G_1 = G_2 = 0$ in $\mbP (a_2,a_3,a_4,a_5)$.
Quasi-smoothness of $X$ implies the presence of some monomials in $G_1, G_2$ and after replacing coordinates, the equations $G_1 = G_2 = 0$ can be transformed into the form given in the second column of Tables \ref{table:Lxy-i} and \ref{table:Lxy-ii} (see Example \ref{ex:Lxyeq} below).

Once we know the equations, it is then straightforward to determine $\Sing (L_{xy})$ via the computation of the Jacobian matrix and the description of $\Sing (L_{xy})$ is given in the third column of each table.
It is also straightforward to check that $L_{xy}$ is irreducible (see Example \ref{ex:Lxyirred} below).
\end{proof}

\begin{table}[htb]
\begin{center}
\caption{$L_{xy}$ for type $\IFi$ families}
\label{table:Lxy-i}
\begin{tabular}{cccc}
No. & Equations $G_1 = G_2 = 0$ & $\Sing (L_{xy})$ & Irr \\
\hline \\[-3mm]
42 & $u z + t z + s^2 = u t + z^3 = 0$ & $\emptyset$ & \\
55 & $u z + s^2 = u s + t^2 + s z^2 = 0$ & $\emptyset$ & \\
66 & $u z + t^2 = u s + z^3 = 0$ & $\emptyset$ & \\
68 & $u s + t^2 = u t + z^3 = 0$ & $\emptyset$ & \\
69 & $u z + s^2 = u s + t^2 = 0$ & $\emptyset$ & \\
77 & $u z + s^2 = u s + t^2 + \lambda z^3 = 0$, $\lambda \ne 0$ & $\emptyset$ & \\
81 & $u s + t^2 + z^3 = u t + \lambda s^2 z = 0$, $\lambda \ne 0$ & $\emptyset$ & \\
82 & $u z + s^2 = u s + t^2 = 0$ & $\{\msp_z\}$ & $\pi_z$ \\
\end{tabular}
\end{center}
\end{table}

\begin{table}[htb]
\begin{center}
\caption{$L_{xy}$ for type $\IFii$ families}
\label{table:Lxy-ii}
\begin{tabular}{ccccc}
No. & Equations $G_1 = G_2 = 0$ & $\Sing (L_{xy})$ & Irr & Cond \\
\hline \\[-3mm]
40 & $t^2 + s z^2 = u z + s^3 = 0$ & $\{\msp_u\}$ & $\pi_u$ & $s z^2 \in F_1$  \\
43 & $t^2 + s z^2 = u s + z^4 = 0$ & $\{\msp_s, \msp_u\}$ & $\pi_u$ & $s z^2 \in F_1$ \\
50 & $u z + t z + s^2 = u t + s z^3 = 0$ & $\emptyset$ & & $s z^3 \in F_1$ \\
52 & $t z + s^2 = u t + z^5 = 0$ & $\{\msp_u\}$ & $\pi_u$ & $s^2 \in F_1$ \\
53 & $u s + t^2 + z^3 = u t + \lambda s z^2 = 0$, $\lambda \ne 0$ & $\{\msp_s\}$ & $s \ne 0$ & $s z^2 \in F_1$ \\
54 & $s^3 + z^4 = u z + t^2 = 0$ & $\{\msp_u\}$ & $\pi_u$ & \\
56 & $s^3 + z^4 = u s + t^2 = 0$ & $\{\msp_u\}$ & $\pi_u$ & \\
57 & $u z + t s = u s + t^2 + s z^3 = 0$ & $\emptyset$ & & $t s \in F_1$\\
58 & $u s + t s + z^3 = u t + s^2 z = 0$ & $\emptyset$ & & \\
61 & $t^2 + z^3 = u z + s^3 = 0$ & $\{\msp_u\}$ & $\pi_u$ & \\
62 & $t^2 + z^3 = u t + s^3 = 0$ & $\{\msp_u\}$ & $\pi_u$ & \\
63 & $u z + t s = u t + \lambda t z^2 + s^3 = 0$, $\lambda \ne 0$ & $\emptyset$ & & $\not\exists (1,3,5,8)$ \\
65 & $t^2 + s^3 z = u z + s^3 = 0$ & $\{\msp_u\}$ & $\pi_u$ & \\
67 & $u z + t^2 + s^2 z = u s + \lambda t z^2 = 0$, $\lambda \ne 0$ & $\emptyset$ & & $t z^2 \in F_2$ \\
70 & $t^2 + s^2 z = u s + z^4 = 0$ & $\{\msp_u\}$ & $\pi_u$ & \\
72 & $s^3 + z^5 = u z + t^2 = 0$ & $\{\msp_u\}$ & $\pi_u$ & \\
73 & $u z + s^3 = u s + t^2 + \lambda z^4 = 0$, $\lambda \ne 0$ & $\emptyset$ & & \\
74 & $u z + s^2 = u s + t^2 + \lambda z^6 = 0$, $\lambda \ne 0$ & $\emptyset$ & & \\
79 & $t^2 + s^3 = u s + z^4 = 0$ & $\{\msp_u\}$ & $\pi_u$ & \\
80 & $u z + t^2 = u s + z^4 = 0$ & $\{ \msp_s \}$ & $\pi_s$ & \\
83 & $t^2 + z^5 = u z + s^3 = 0$ & $\{ \msp_u\}$ & $\pi_u$ & \\
\end{tabular}
\end{center}
\end{table}

\begin{Rem}
We explain the conditions given in the 5th column of Table \ref{table:Lxy-ii}.

The condition of the form $M \in F_i$, where $M$ is a monomial, means that the monomial appears in $F_i$ with non-zero coefficient.
This is clearly a generality condition, i.e.\ this is satisfied for a general member.

We explain the condition $\not\exists (1,3,5,8)$ for a member $X$ of family No.~$63$.
This means that $X$ does not contain a weighted complete intersection curve (WCI curve, for short) of type $(1,3,5,8)$.
Here, a WCI curve of type $(1,3,5,8)$ can be explicitly given as $x = y = s = \lambda u + \mu z^2 = 0$, so that such curves form $1$-dimensional family.
On the other hand, $2$ conditions are imposed for $X$ to contain a given WCI curve of type $(1,3,5,8)$.
Thus this condition is also a generality condition.
\end{Rem}

\begin{Ex} \label{ex:Lxyeq}
We explain how to obtain equations $G_1 = G_2 = 0$ in Tables \ref{table:Lxy-i} and \ref{table:Lxy-ii}.
Let $X$ be a member of family No.~$\msi$.

Suppose that $\msi = 40$.
Then we can write
\[
G_1 = \alpha t^2 + \beta s z^2, \quad
G_2 = \gamma u z + \delta t s z + \varepsilon s^3 + \lambda z^4,
\]
for some $\alpha, \beta, \dots, \lambda \in \mbC$.
We have $\alpha \ne 0, \gamma \ne 0, \varepsilon \ne 0$ since $X$ is quasi-smooth.
Re-scaling $u$ and $s$, we may assume $\gamma = \varepsilon = 1$.
Then, by replacing $u$, we can eliminate monomials in $G_2$ divisible by $u$, so that we may assume $G_2 = u z + s^3$. 
By the condition $s z^2$, we have $\beta \ne 0$.
Then, re-scaling $t$, we may assume $G_1 = t^2 + s z^2$, and we obtain the equations in the table.

Suppose that $\msi = 63$.
Then we can write
\[
G_1 = \alpha u z + \beta t s + \gamma z^3, \quad
G_2 = \delta u t + \lambda t z^2 + \varepsilon s^3,
\]
for some $\alpha, \beta, \dots, \lambda \in \mbC$.
By quasi-smoothness of $X$, we have $\alpha \ne 0, \beta \ne 0, \delta \ne 0, \varepsilon \ne 0$.
Replacing $u$ (in order to eliminate $z^3$), we can assume $\gamma = 0$.
Then, re-scaling $z, s, u$, we may assume that $\alpha = \gamma = \delta = \varepsilon = 1$.
We have $\lambda \ne 0$ because otherwise $X$ contains the WCI curve $(x = y = s = u = 0)$ of type $(1, 3, 5, 8)$ which is impossible the condition $\not\exists (1, 3, 5, 8)$.
Thus we obtain the desired equations.
\end{Ex}

\begin{Ex} \label{ex:Lxyirred}
We explain how to check that $L_{xy}$ is irreducible.
Note that $L_{xy}$ is clearly reduced.

First we consider $L_{xy}$ which is nonsingular.
By a choice of coordinates, we may assume that $H_x$ is quasi-smooth by Condition \ref{cd}(1).
Then $H_x$ is a normal projective variety and $L_{xy}$ is defined by $y = 0$ on $H_x$.
It follows that $L_{xy}$ is the support of an ample divisor and thus it is connected.
Since $L_{xy}$ is nonsingular, it is irreducible.

Next we consider $L_{xy}$ which is singular.
Suppose that $\pi_v$ is given (for some $v \in \{z,s,u\}$) in the 4th column of the tables.
Then the projection $\pi_v|_{L_{xy}} \colon L_{xy} \to \pi_v (L_{xy})$ is a birational morphism onto its image $\pi_v (L_{xy})$ and we can check that $\pi_v (L_{xy})$ is an irreducible curve.
This shows that $L_{xy}$ is irreducible.
For example, we consider family No.~$40$.
Then 
\[
L_{xy} \cong (t^2 + s z^2 = u z + s^3 = 0) \subset \mbP (3,4,5,9)
\]
and
\[
\pi_u (L_{xy}) \cong (t^2 + s z^2 = 0) \subset \mbP (3,4,5)
\]
and $\pi_u$ is clearly irreducible.

We consider family No.~$53$.
Let $U \subset X$ be the open subset on which $s \ne 0$.
Then $L_{xy} \cap U$ is isomorphic to the $\mbZ/5 \mbZ$-quotient of the affine curve
\[
\begin{split}
& (u + t^2 + z^3 = u t + \lambda z^2 = 0) \subset \mbA^3_{z,t,u}, \\
\cong & \ ((t^2 + z^3) t - \lambda z^2 = 0) \subset \mbA^2_{z,t},
\end{split}
\]
which is irreducible since $\lambda \ne 0$.
Moreover $L_{xy} \cap (X \setminus U)$ consists of $2$ points.
This shows that $L_{xy}$ is irreducible.
\end{Ex}

\begin{Rem} \label{rem:multLxy}
Let $X$ be a member of family No.~$\msi \in \IFi \cup \IFii$ and $\msp$ a point of $L_{xy}$.
Let $\rho_{\msp} \colon \check{X}_{\msp} \to X$ be the index one cover of an open neighborhood of $\msp \in X$ and let $\check{\msp} \in \check{X}_{\msp}$ be the preimage of $\msp$.
When $\msp$ is a nonsingular point of $L_{xy}$ we have $\mult_{\check{\msp}} (\check{L}_{xy}) = \mult_{\msp} (L_{xy}) = 1$.
For a singular point $\msp$ of $L_{xy}$ described in the 3rd column of Tables \ref{table:Lxy-i} and \ref{table:Lxy-ii}, we have
\[
\mult_{\check{\msp}} (\check{L}_{xy}) =
\begin{cases}
4, & \text{for family No.~56 and $\msp = \msp_u$}, \\
3, & \text{for families No.~54, 62, 72 and $\msp = \msp_u$} \\ 
2, & \text{otherwise}.
\end{cases}
\]
This implies $\mult_{\check{\msp}} (\check{L}_{xy}) \le a_1$.

We explain how to compute $\mult_{\check{\msp}} (\check{L}_{xy})$ by an example.
We consider $\msp = \msp_z$ for family No.~82.
The curve $\check{L}_{xy}$ is isomorphic to the curve defined by
\[
u + s^2 = u s + t^2 = 0
\]
in the affine space $\mbA^3$ with coordinates $s, t, u$, and $\check{\msp}$ corresponds to the origin.
These equations are simply obtained by setting $z = 1$.
It is then easy to see that $\mult_{\check{\msp}} (\check{L}_{xy}) = 2$.
Computations for the other instances are the same.
\end{Rem}

\begin{Prop} \label{prop:compLxy}
Let $X$ be a member of family No.~$\msi$ with $\msi \in \IFi \cup \IFii$ and let $\msp \in L_{xy}$ be a nonsingular point of $X$.
Then $\lct_{\msp} (X) \ge 1$.
\end{Prop}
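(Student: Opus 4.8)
The plan is to apply Lemma \ref{lem:criwisol} with the curve $\Gamma = L_{xy}$, the divisor $S = H_x \sim_{\mbQ} A$ (so that $c = 1$), and the isolating class produced by the coordinates $x$ and $y$. First I would record the geometric data along $L_{xy}$. Since $\msp$ is a nonsingular point of $X$ and $\Sing (L_{xy}) \subset \Sing (X)$ by Lemma \ref{lem:irredLxy}, the point $\msp$ is a smooth point of the irreducible and reduced curve $L_{xy}$, so $m := \mult_{\msp} (L_{xy}) = 1$. As $x$ and $y$ form part of a regular system of parameters at a general point of $L_{xy}$, the surfaces $H_x$ and $H_y$ meet generically transversally along $L_{xy}$, so $L_{xy}$ equals the reduced cycle $H_x \cdot H_y$ and $d := (A \cdot L_{xy}) = (A \cdot H_x \cdot H_y) = a_1 (A^3)$.

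Next I would produce the isolating class. The monomials $x, y$ satisfy $(x = y = 0) \cap X = L_{xy}$, whose only component through $\msp$ is $L_{xy}$ itself, so $x, y$ are $(\msp, L_{xy})$-isolating polynomials; by Lemma \ref{lem:isolsetclass}, $l A$ is then a $(\msp, L_{xy})$-isolating class with $l := \max \{\deg x, \deg y\} = a_1$. It then remains to check condition (2) of Lemma \ref{lem:criwisol}. With $c = 1$, $m = 1$, $d = a_1 (A^3)$ and $l = a_1$ one has $l d = a_1^2 (A^3)$; since $a_1 \le a_3$ and the characterizations in Remark \ref{rem:numcharact} give $a_1 a_3 (A^3) \le 1$ for all $\msi \in \IFi \cup \IFii$ (directly for $\IFii$, and via $a_3 \le a_5$ together with $a_1 a_5 (A^3) \le 1$ for $\IFi$), we obtain $l d = a_1^2 (A^3) \le a_1 a_3 (A^3) \le 1 = m$. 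Thus we are in case (2-a), whose second inequality $c m (A^3) = (A^3) \le a_1 (A^3) = d$ holds because $a_1 \ge 1$. So both inequalities of (2-a) are verified.

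The main obstacle is hypothesis (1) of Lemma \ref{lem:criwisol}, that $(X, H_x)$ is log canonical at $\msp$. Here I would argue, exactly as in Example \ref{ex:Lxyirred}, that after a choice of coordinates permitted by Condition \ref{cd} the surface $H_x = (x = 0) \cap X$ is quasi-smooth; being quasi-smooth it is normal and is smooth at every nonsingular point of $X$ lying on it, so it is smooth at $\msp$, and $(X, H_x)$ is even canonical, hence log canonical, at $\msp$ (and $H_x$ is prime, being connected and quasi-smooth). With (1) and (2-a) in hand, Lemma \ref{lem:criwisol} gives $\lct_{\msp} (X) \ge 1$. The one point deserving care is the case $a_1 > 1$, where $x$ is the unique weight-one variable and cannot be modified: there the quasi-smoothness of $H_x$ must be obtained from the generality encoded in Condition \ref{cd} on the defining polynomials, rather than from a coordinate change as when $a_1 = 1$.
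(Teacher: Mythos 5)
Your proposal takes a genuinely different route from the paper's. The paper applies Lemma \ref{lem:exclL} to general members $S_1 \in |A|$ and $S_2 \in |a_1 A|$, whose scheme-theoretic intersection is $L_{xy}$, with $c_1 = 1$, $c_2 = a_1$ and $r = 1$; you instead apply Lemma \ref{lem:criwisol} with $S = H_x$ and the $(\msp, L_{xy})$-isolating class $a_1 A$ produced from the polynomials $x, y$. Your numerical work is correct: $m = \mult_{\msp}(L_{xy}) = 1$ by Lemma \ref{lem:irredLxy}, $d = (A \cdot L_{xy}) = a_1 (A^3)$ because the scheme $H_x \cap H_y = L_{xy}$ is reduced, $l = a_1$ by Lemma \ref{lem:isolsetclass}, and the two inequalities of case (2-a) follow from $a_1 \le a_3 \le a_5$ together with Remark \ref{rem:numcharact}, exactly as you argue. (Primeness of $H_x$, by the way, needs no quasi-smoothness at all: $\Cl(X) = \mbZ A$ with $A$ ample, so any effective divisor in the class $A$ is automatically irreducible and reduced, since a nontrivial decomposition would produce effective divisors whose classes are positive integers summing to $1$.)

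The weak link is your verification of hypothesis (1) of Lemma \ref{lem:criwisol}, namely that $(X, H_x)$ is log canonical at $\msp$. You derive it from global quasi-smoothness of $H_x$, but this is an additional generality statement that Condition \ref{cd} does not obviously contain: the paper invokes quasi-smoothness of $H_x$ (in Example \ref{ex:Lxyirred}) only in the subcase where $L_{xy}$ is nonsingular, and, as you yourself concede, when $a_1 > 1$ the divisor $H_x$ is rigid, so its quasi-smoothness would require a family-by-family check via the criteria of \cite{IF} which you do not carry out. Fortunately the gap closes for free, and the paper's own proof shows how: since $H_x \cap H_y = L_{xy}$ scheme-theoretically and $\msp$ is a nonsingular point of $X$, one has $\mult_{\msp}(H_x) \le \mult_{\msp}(H_x)\, \mult_{\msp}(H_y) \le \mult_{\msp}(L_{xy}) = 1$, so $H_x$ is smooth at $\msp$ and $(X, H_x)$ is log canonical there. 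This local multiplicity bound is precisely how the paper verifies assumption (2) of Lemma \ref{lem:exclL} for its divisor $S_1$; substituting it for your quasi-smoothness claim makes your proof complete.
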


\begin{proof}
Let $S_1 \in |A|$ and $S_2 \in |a_1 A|$ be general members.
We have $\mult_{\msp} (S_1) = 1$ since $L_{xy} = S_1 \cap S_2$ and $\mult_{\msp} (L_{xy}) = 1 \le a_1$ by Lemma \ref{lem:irredLxy}.
It follows that $(X, S_1)$ is log canonical at $\msp$.
Thus we can apply Lemma \ref{lem:exclL} and conclude that $\lct_{\msp} (X) \ge 1$.
\end{proof}

\subsection{Nonsingular points not contained in $L_{xy}$}

\subsubsection{Families No.~$\msi$ with $\msi \in \IFi$}

\begin{Lem} \label{lem:isol-i}
Let $X$ be a member of family No.~$\msi$ with $\msi \in \IFi$ and $\msp$ a nonsingular point of $X$.
\begin{enumerate}
\item If $\msi \in \{42, 55, 69, 77\}$, then $a_5 A$ is a $\msp$-isolating class for any $\msp \in X \setminus L_{xy}$.
\item If $\msi \in \{66, 68, 81, 82\} = \IFi \setminus \{42, 55, 69, 77\}$, then $l A$ is a $\msp$-isolating class, where
\[
l = \begin{cases}
a_1 a_5, & \text{if $\msp \in H_x \setminus L_{xy}$}, \\
a_5, & \text{if $\msp \in X \setminus H_x$}.
\end{cases}
\]
\end{enumerate}
\end{Lem}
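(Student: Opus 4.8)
The plan is to deduce both parts from Lemma~\ref{lem:findisol} applied to the identity projection $\pi = \mathrm{id}$ (Remark~\ref{rem:findisol}), for which $\Exc(\pi) = \emptyset$, so that the only hypothesis to check is $\msp \notin H_{x_j}$. Writing $X \setminus L_{xy} = (X \setminus H_x) \cup (H_x \setminus L_{xy})$, I would split into these two cases and choose the coordinate $x_j$ to project away accordingly; this split also explains the two-line formula for $l$ in part (2).

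First I would treat the points $\msp \in X \setminus H_x$, which occur in both parts. Here $x(\msp) \ne 0$, so with $\pi = \mathrm{id}$ and $j = 0$ (the coordinate $x$) the hypothesis $\msp \notin H_x$ of Lemma~\ref{lem:findisol} holds, and I obtain that $lA$ is a $\msp$-isolating class with $l = \max_{1 \le k \le 5} \lcm(1, a_k) = \max_{1 \le k \le 5} a_k = a_5$, since $a_0 = 1$ and $a_5$ is the largest weight. This yields the value $a_5$ asserted for such $\msp$ in both (1) and (2).

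Next I would treat the points $\msp \in H_x \setminus L_{xy}$. As $L_{xy} = H_x \cap H_y$, such a point satisfies $y(\msp) \ne 0$, i.e.\ $\msp \notin H_y$. With $\pi = \mathrm{id}$ and $j = 1$ (the coordinate $y$), Lemma~\ref{lem:findisol} gives that $lA$ is a $\msp$-isolating class with
\[
l = \max_{0 \le k \le 5,\, k \ne 1} \lcm(a_1, a_k).
\]
For $\msi \in \{42, 55, 69, 77\}$ one has $a_1 = 1$, hence $\lcm(a_1, a_k) = a_k$ and $l = \max_{k \ne 1} a_k = a_5$, matching (1). For $\msi \in \{66, 68, 81, 82\}$ one has $a_1 > 1$, and it remains to check, reading off the weights from Table~\ref{table:codim2Fanos-i}, that this maximum equals $a_1 a_5$; concretely this reduces to verifying $\gcd(a_1, a_5) = 1$ (so that $\lcm(a_1, a_5) = a_1 a_5$) together with $\lcm(a_1, a_k) \le a_1 a_5$ for the remaining indices $k$. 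This gives $l = a_1 a_5$, matching (2).

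The argument is essentially immediate once Lemma~\ref{lem:findisol} is in hand; the only place demanding care is the final lcm maximum for the four families with $a_1 > 1$, which is a routine inspection of the weight data and where I expect no conceptual difficulty. For part (1) an alternative is to invoke Lemma~\ref{lem:isolstr} after a linear change among the two degree-one coordinates $x, y$ arranging $x(\msp) \ne 0$ and $y(\msp) \ne 0$, but the uniform treatment via Lemma~\ref{lem:findisol} is cleaner and avoids the coordinate change.
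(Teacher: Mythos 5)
Your proposal is correct and follows essentially the same route as the paper: the paper also deduces both parts from Lemma \ref{lem:findisol} via Remark \ref{rem:findisol} (the identity projection), using the coordinate $x$ when $x(\msp) \ne 0$ and the coordinate $y$ for points of $H_x \setminus L_{xy}$, with the same implicit check that $\max_{k \ne 1} \lcm(a_1,a_k) = a_1 a_5$ for the four families with $a_1 > 1$. The only cosmetic difference is that the paper first normalizes coordinates so that $\msp = \msp_x$ (swapping the two degree-one coordinates when $a_1 = 1$), whereas you apply the lemma directly with $j = 0$ or $j = 1$.
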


\begin{proof}
Suppose that either $\msi \in \{42, 55, 69, 77\}$ and $\msp \in X \setminus L_{xy}$ or $\msi \in \{66,68,81,82\}$ and $\msp \in X \setminus H_x$.
In this case we may assume $\msp = \msp_x$ after replacing coordinates.
Then $a_5 A$ is a $\msp$-isolating class by Lemma \ref{lem:findisol} (see also Remark \ref{rem:findisol}).

Suppose that $\msi \in \{66,68,81,82\}$ and $\msp \in H_x \setminus L_{xy}$.
Then $\msp \notin H_y$ and thus $a_1 a_5 A$ is a $\msp$-isolating class by Lemma \ref{lem:findisol}.
This completes the proof.
\end{proof}

\begin{Prop} \label{prop:nonsingpart2i}
Let $X$ be a member of family No.~$\msi$ with $\msi \in \IFi$.
Then $\lct_{\msp} (X) \ge 1$ for any nonsingular point $\msp \in X$.
\end{Prop}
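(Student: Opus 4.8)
The plan is to reduce the statement to the two earlier results that describe the curve $L_{xy}$ and the isolating classes, so I would first split the nonsingular points $\msp \in X$ according to whether or not they lie on $L_{xy}$. For $\msp \in L_{xy}$ there is nothing to prove, since Proposition \ref{prop:compLxy} already gives $\lct_{\msp}(X) \ge 1$. The real content is therefore a nonsingular point $\msp \in X \setminus L_{xy}$, and for every such point Lemma \ref{lem:isol-i} supplies an explicit $\msp$-isolating class $lA$. The strategy is then uniform: produce a prime divisor $S \sim_{\mbQ} cA$ through $\msp$ for which $(X, \tfrac{1}{c}S)$ is log canonical at $\msp$ and $cl(A^3) \le 1$, and invoke Lemma \ref{lem:exclG}(2). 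The observation I would emphasize is that the defining numerical characterization of $\IFi$, namely $a_1 a_5 (A^3) \le 1$ (Remark \ref{rem:numcharact}), is \emph{exactly} the inequality $cl(A^3)\le 1$ that arises in each case below.

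For the families with $a_1 = 1$ (that is, No.~$42, 55, 69, 77$), Lemma \ref{lem:isol-i}(1) gives the isolating class $a_5 A$, so $l = a_5$. Here I would take $S$ to be the tangent divisor $T_{\msp}$, the unique member of $|\mcI_{\msp}(A)|$, so that $c = 1$. By Condition \ref{cdsp}(2) the conclusion of Lemma \ref{lem:lcttang} holds, whence $(X, T_{\msp})$ is log canonical at $\msp$; since $cl(A^3) = a_5(A^3) = a_1 a_5 (A^3) \le 1$, Lemma \ref{lem:exclG}(2) yields $\lct_{\msp}(X) \ge 1$.

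For the families with $a_1 > 1$ (No.~$66, 68, 81, 82$) I would subdivide $X \setminus L_{xy}$ as in Lemma \ref{lem:isol-i}(2). If $\msp \in X \setminus H_x$ the isolating class is $a_5 A$, and I would use $S = T_{\msp}$, the unique member of $|\mcI_{\msp}(a_1 A)|$, so $c = a_1$; by Condition \ref{cdsp}(2) and Lemma \ref{lem:multtang} we have $\mult_{\msp}(T_{\msp}) \le 2$, and since $a_1 \ge 2$ the multiplicity of $\tfrac{1}{a_1}T_{\msp}$ is at most $\tfrac{2}{a_1} \le 1$, so $(X, \tfrac{1}{a_1}T_{\msp})$ is log canonical at $\msp$; the inequality $cl(A^3) = a_1 a_5 (A^3) \le 1$ again lets Lemma \ref{lem:exclG}(2) apply. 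If instead $\msp \in H_x \setminus L_{xy}$ the isolating class is the larger class $a_1 a_5 A$, and here I would take $S = H_x \sim_{\mbQ} A$, so that $c = 1$ and once more $cl(A^3) = a_1 a_5 (A^3) \le 1$.

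The step I expect to be the genuine obstacle is verifying, in this last sub-case, that $(X, H_x)$ is log canonical at $\msp \in H_x \setminus L_{xy}$. The plan is to arrange, after the coordinate change permitted by Condition \ref{cd}(1), that $H_x$ is quasi-smooth and hence normal, and then to argue that at a point of $H_x$ which is a nonsingular point of $X$ and lies off $L_{xy}$ the surface $H_x$ is in fact smooth, so that $\mult_{\msp}(H_x) = 1$ and the pair is log canonical. Establishing this smoothness is precisely where one must exploit the quasi-smoothness of $H_x$ together with the fact that such $\msp$ avoids both $L_{xy}$ and $\Sing(X)$; once that is in place, every case collapses to the single clean identity $cl(A^3) = a_1 a_5 (A^3) \le 1$, which holds for all eight families by the characterization of $\IFi$.
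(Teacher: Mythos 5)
Your proposal follows essentially the same route as the paper's proof: reduce to $\msp \notin L_{xy}$ via Proposition \ref{prop:compLxy}, then split according to $a_1 = 1$ or $a_1 \ge 2$ (and, in the latter case, $\msp \in H_x$ or not), pair the isolating classes of Lemma \ref{lem:isol-i} with $S = T_{\msp}$ or $S = H_x$, and close with Lemma \ref{lem:exclG}(2) using the defining inequality $a_1 a_5 (A^3) \le 1$ of $\IFi$. The one step you flag as the genuine obstacle---log canonicity of $(X, H_x)$ at $\msp \in H_x \setminus L_{xy}$---is indeed left implicit in the paper, and your proposed resolution (quasi-smoothness of $H_x$ gives smoothness at nonsingular points of $X$ away from $\Sing (\mbP)$, hence $\mult_{\msp} (H_x) = 1$ and log canonicity) is precisely the justification the paper relies on elsewhere, e.g.\ in Proposition \ref{prop:lctsingpI} and Example \ref{ex:Lxyirred}.
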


\begin{proof}
By Proposition \ref{prop:compLxy}, it remains to consider $\msp$ with $\msp \notin L_{xy}$.
Recall that $a_1 a_5 (A^3) \le 1$ (see Remark \ref{rem:numcharact}).

Suppose that $\msi \in \{42, 55, 69, 77\}$.
In this case we have $1 = a_1 < a_2$.
Let $S := T_{\msp} \sim_{\mbQ} A$ be the unique member of the linear system $|\mcI_{\msp} (A)|$.
By Lemma \ref{lem:lcttang}, the pair $(X, S)$ is log canonical at $\msp$.
By Lemma \ref{lem:isol-i}, $a_5 A$ is a $\msp$-isolating class and we have $1 \cdot a_5 (A^3) = a_1 a_5 (A^3) \le 1$.
Thus $\lct_{\msp} (X) \ge 1$ by Lemma \ref{lem:exclG}.

Suppose that $\msi \in \{66, 68, 81, 82\}$.
In this case we have $2 \le a_1 < a_2$.
If $\msp \in H_x \setminus L_{xy}$, then we have $\lct_{\msp} (X) \ge 1$ by applying Lemma \ref{lem:exclG} for $S := H_x \sim_{\mbQ} A$ and the $\msp$-isolating class $a_1 a_5 A$ (see Lemma \ref{lem:isol-i}) since $a_1 a_5 (A^3) \le 1$.
We assume that $\msp \notin H_x$.
Let $S := T_{\msp} \sim_{\mbQ} a_1 A$ be the unique member of $|\mcI_{\msp} (a_1 A)|$.
By Lemma \ref{lem:multtang}, $(X,\frac{1}{a_1} S)$ is log canonical at $\msp$.
Thus, applying Lemma \ref{lem:exclG} for $S \sim_{\mbQ} a_1 A$ and the $\msp$-isolating class $a_5 A$ (see Lemma \ref{lem:isol-i}), we conclude $\lct_{\msp} (X) \ge 1$.
This completes the proof.
\end{proof}

\subsubsection{Families No.~$\msi$ with $\msi \in \IFii$}

\begin{Lem} \label{lem:lctnonsingptexcii}
Let $X$ be a member of family No.~$\msi$ with $\msi \in \IFii$.
Then $\lct_{\msp} (X) \ge 1$ for any nonsingular point $\msp \in \Upsilon_X$ of $X$.
\end{Lem}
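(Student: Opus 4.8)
The plan is to reduce the statement to Proposition \ref{prop:lctfflopcurve}, which already settles log canonicity of $X$ at every nonsingular point of the flopping locus $\Upsilon_{\msp}$ attached to a \emph{single} distinguished singular point $\msp$, provided two hypotheses hold: $\wprod (\msp) \ge d_1$, and $\overline{\Upsilon}_{\msp}$ consists of $e = (d_1 d_2)/(a_{i_1} a_{i_2})$ distinct nonsingular points of $\overline{\mbP}$. Since the content of the present lemma has been front-loaded into that proposition and into the defining properties of $\IFii$, the argument should be short: the only real work is to confirm that both hypotheses are automatic for $\msi \in \IFii$ and that every nonsingular point of $\Upsilon_X$ is covered by the flopping locus of some one distinguished singular point.

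First I would recall that $\Upsilon_X = \bigcup_{\msp} \Upsilon_{\msp}$, the union being taken over the distinguished singular points $\msp$ of $X$. Hence any nonsingular point $\msq \in \Upsilon_X$ lies on $\Upsilon_{\msp}$ for at least one distinguished singular point $\msp$, and it suffices to verify the hypotheses of Proposition \ref{prop:lctfflopcurve} for that fixed $\msp$; the conclusion $\lct_{\msq} (X) \ge 1$ then follows immediately by applying the proposition to $\msq \in \Upsilon_{\msp}$.

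Next I would check the two hypotheses using the numerical characterization of $\IFii$ in Remark \ref{rem:numcharact} together with the generality assumption Condition \ref{cdsp}(3). The characterization of $\IFii$ explicitly includes the inequality $\wprod (\msp) > d_1$ for every distinguished singular point $\msp$, which in particular gives the required $\wprod (\msp) \ge d_1$. For the second hypothesis, Condition \ref{cdsp}(3) asserts that for $\msi \in \IFii$ the flopping locus $\Upsilon_{\msp}$ consists of $d_1 d_2/\wprod (\msp)$ distinct irreducible and reduced curves; since $\wprod (\msp) = a_{i_1} a_{i_2}$ by Lemma \ref{lem:coordflopcurve}(3), this number is exactly $e$, and, as observed just before Proposition \ref{prop:lctfflopcurve}, this is equivalent to $\overline{\Upsilon}_{\msp}$ consisting of $e$ distinct nonsingular points of $\overline{\mbP}$. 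Thus both hypotheses hold and the lemma follows.

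The main point of care is purely bookkeeping rather than any genuine obstacle: one must match the counting so that $\wprod (\msp) = a_{i_1} a_{i_2}$ and $e = d_1 d_2/\wprod (\msp)$, ensuring that the generality condition imposed is precisely the one demanded by Proposition \ref{prop:lctfflopcurve}, and one must make sure that a nonsingular point of $\Upsilon_X$ does indeed lie on the flopping locus of a single distinguished singular point (so that the proposition, stated for one $\msp$, applies verbatim). No new estimate is needed beyond what is already contained in the cited results.
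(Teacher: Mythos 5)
Your proposal is correct and matches the paper's own proof, which is precisely the one-line reduction to Proposition \ref{prop:lctfflopcurve} via Remark \ref{rem:numcharact} (giving $\wprod (\msp) > d_1$ for every distinguished singular point when $\msi \in \IFii$) and Condition \ref{cdsp}(3) (which the paper notes is equivalent to the hypothesis that $\overline{\Upsilon}_{\msp}$ consists of $e$ distinct nonsingular points). Your expanded bookkeeping—decomposing $\Upsilon_X$ into the $\Upsilon_{\msp}$ and matching $\wprod (\msp) = a_{i_1} a_{i_2}$ with $e = d_1 d_2 / \wprod (\msp)$—is exactly the verification the paper leaves implicit.
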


\begin{proof}
This follows from Remark \ref{rem:numcharact}, Condition \ref{cdsp}(3) and Proposition \ref{prop:lctfflopcurve}.
\end{proof}

For the computation of LCT at a nonsingular point $\msp$ not contained in $L_{xy} \cup \Upsilon_X$, we determine a $\msp$-isolating class or a $(\msp, \Gamma)$-isolating class for a suitable curve $\Gamma$ and then apply Lemma \ref{lem:exclG} or \ref{lem:criwisol}.
We divide $\IFii$ into the disjoint union $\IFii = \IFiia \cup \IFiib \cup \IFiic$, where
\[
\begin{split}
\IFiia &= \{52, 63\}, \\ 
\IFiib &= \{40, 54, 61\}, \\
\IFiic &= \{43, 50, 53, 56, 57, 58, 62, 65, 67, 70, 72, 73, 74, 79, 80, 83\}.
\end{split}
\]

\begin{Lem} \label{lem:isol-iia}
Let $X$ be a member of family No.~$\msi$ with $\msi \in \IFiia = \{52, 63\}$ and $\msp$ a nonsingular point of $X$.
\begin{enumerate}
\item If $\msp \notin H_x \cup \Upsilon_X$, then $a_3 A$ is a $(\msp, \Gamma)$-isolating class, where $\Gamma \subset X$ is an irreducible and reduced curve with
\[
(A \cdot \Gamma) = \frac{a_4 + a_5}{a_4 a_5}, \quad
\mult_{\msp} (\Gamma) = 1.
\]
\item If $\msi = 52$ and $\msp \in H_x \setminus (L_{xy} \cup \Upsilon_X)$, then $a_1 a_3 A$ is a $\msp$-isolating class.
\item If $\msi = 63$ and $\msp \in H_x \setminus L_{xy}$, then either $15A$ is a $\msp$-isolating class or $5A$ is a $(\msp, \Gamma)$-isolating class, where $\Gamma \subset X$ is an irreducible and reduced curve with
\[
(A \cdot \Gamma) = \frac{5}{56}, \quad
\mult_{\msp} (\Gamma) = 1.
\]
\end{enumerate}
\end{Lem}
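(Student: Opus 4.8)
The plan is to handle all three cases uniformly: produce explicit (weighted) homogeneous forms vanishing at $\msp$ whose common zero on $X$ is either a finite set or $\{\msp\}\cup\Gamma\cup\Theta$ with $\Theta$ disjoint from $\msp$ and $\Gamma$ the claimed curve, and then invoke Lemma~\ref{lem:isolsetclass} to convert these $(\msp,\Gamma)$-isolating polynomials into a $(\msp,\Gamma)$-isolating class of degree equal to the maximum degree of the chosen forms. Throughout I would first move $\msp$ to a coordinate point by a grading-preserving coordinate change, which is harmless since the constructions of Lemmas~\ref{lem:findisol} and \ref{lem:isolstr} see only the weights. The governing phenomenon is that the low non-unit weights $a_1,a_2,a_3$ can be eliminated by forms of degree $\le a_3$, while the two top weights $a_4=7,a_5=8$ cannot; hence off $H_x$ one isolates $\msp$ cheaply, whereas on $H_x$ one must pay $\lcm(a_1,a_3)=a_1a_3$.

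For part (1) I would set $\msp=\msp_x$ (allowed as $\msp\notin H_x$). The coordinates $y,z,s$ have degrees $a_1,a_2,a_3\le a_3$ and vanish at $\msp$; their common zero on $X$ lies in the plane $\mbP(1,a_4,a_5)$ spanned by $x,t,u$, cut there by the restrictions of $F_1,F_2$, and for general $X$ (with $\msp\notin H_x\cup\Upsilon_X$ keeping $\msp$ off the degenerate loci) this is a finite set. Thus $y,z,s$ already show that $a_3A$ is a genuine $\msp$-isolating class, and \emph{a fortiori} a $(\msp,\Gamma)$-isolating class for any curve $\Gamma\ni\msp$. The remaining content is to exhibit the specific $\Gamma$ of the statement: I would take it to be the component through $\msp$ of a suitable coordinate section and read off, from the normal forms of $F_1,F_2$ (as in Table~\ref{table:Lxy-ii}), that it is irreducible and reduced with $\mult_{\msp}(\Gamma)=1$ and $(A\cdot\Gamma)=\frac{a_4+a_5}{a_4a_5}$. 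This curve is what later feeds the sharper case (2-b) of Lemma~\ref{lem:criwisol}.

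For parts (2) and (3) one has $\msp\in H_x\setminus L_{xy}$, so $x(\msp)=0$ but $y(\msp)\ne0$. Here I would use $x$ (degree $1$) as one isolating form to cut down to $H_x$, then isolate $\msp$ inside $H_x\subset\mbP(a_1,\dots,a_5)$ by the semigroup construction of Lemma~\ref{lem:isolstr} relative to the coordinates nonvanishing at $\msp$. Running over the configurations of which coordinates vanish, the worst case forces the generator $\lcm(a_1,a_3)=a_1a_3$ (namely $10$ for No.~$52$ and $15$ for No.~$63$), yielding $a_1a_3A$ as a $\msp$-isolating class; this is exactly part (2) and the first alternative of part (3). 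In the remaining degenerate configuration for No.~$63$ the top weights again block full isolation by degree-$\le a_3$ forms, and one obtains instead a $(\msp,\Gamma)$-isolating class of the smaller degree $a_3A=5A$, with $\Gamma$ an explicit residual curve of $(A\cdot\Gamma)=\frac{5}{56}$ and $\mult_{\msp}(\Gamma)=1$, giving the second alternative.

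The main obstacle is not the passage to isolating classes (that is formal via Lemma~\ref{lem:isolsetclass}) but the precise control of the residual curves $\Gamma$: proving each is irreducible and reduced, computing $(A\cdot\Gamma)$ and $\mult_{\msp}(\Gamma)$ directly from the defining equations, and — especially for No.~$63$ — separating the two cases of the dichotomy according to which coordinates vanish at $\msp$. These verifications rest on the explicit normal forms of $F_1,F_2$ and on the genericity encoded in Condition~\ref{cdsp}; they are routine but unavoidable case analyses, and are where the real bookkeeping lies.
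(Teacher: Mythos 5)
Your treatment of parts (2) and (3) is essentially the paper's own argument: write $\msp=(0\!:\!1\!:\!\lambda\!:\!\mu\!:\!\nu\!:\!\theta)$, case according to the vanishing of the $z$- and $s$-coordinates, apply the semigroup construction of Lemma \ref{lem:isolstr} in the non-degenerate cases (giving degree at most $a_1a_3$), and in the degenerate case $\lambda=\mu=0$ for No.~63 take $\{x,z,s\}$ as a $(\msp,\Gamma)$-isolating set with $\Gamma=(x=z=s=0)\cap X=(tu-\nu\theta y^5=0)$ of degree $5/56$. (One bookkeeping item you should make explicit: for No.~52 the configuration $\lambda=\mu=0$ must be ruled out, which the paper does using $\msp\notin\Exc(\pi_t)$ together with the nonsingularity of $\msp$; it is not automatic.)

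Part (1), however, contains a genuine gap. You claim that for general $X$ and $\msp\notin H_x\cup\Upsilon_X$ the set $\Xi:=(y=z=s=0)\cap X$ (in coordinates adapted to $\msp=\msp_x$) is finite, so that $a_3A$ is an honest $\msp$-isolating class and the curve $\Gamma$ is merely exhibited afterwards. This is false, and no genericity of $X$ can repair it. In the adapted coordinates one has $F_1(x,0,0,0,t,u)=\alpha u x^{d_1-a_5}+\beta t x^{d_1-a_4}$, where $\alpha,\beta$ are (up to nonzero factors) the values $(\prt F_1/\prt u)(\msp)$ and $(\prt F_1/\prt t)(\msp)$. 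These are two sections of positive degrees $d_1-a_5$ and $d_1-a_4$, so their common zero locus on the $3$-fold $X$ is a nonempty curve, and it is not contained in $H_x\cup\Upsilon_X$ (the conditions $\msp\notin\Exc(\pi_u)$ and $\msp\notin\Exc(\pi_t)$ control other coefficients, namely $(\alpha,\gamma)\ne(0,0)$ and the coefficients of $F_2$, not the pair $(\alpha,\beta)$). At every point of this locus $F_1$ vanishes identically on the plane $(y=z=s=0)\cong\mbP(1,a_4,a_5)$, hence $\Xi$ is the curve cut out there by $F_2$ alone — a curve through $\msp$. This unavoidable case is the entire content of the statement: there $\Gamma=\Xi$ itself, it is irreducible and reduced with $\mult_{\msp}(\Gamma)=1$ precisely because $\msp\notin\Upsilon_X$ forces the surviving coefficients of $F_2$ to be nonzero, and its degree is $(A\cdot\Gamma)=d_2/(a_4a_5)=(a_4+a_5)/(a_4a_5)$, which is exactly the number appearing in the lemma and later fed into case (2-b) of Lemma \ref{lem:criwisol}. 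Your proposal, as written, has no argument at these points, which is where the lemma actually has content.
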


\begin{proof}
We first note that $\Upsilon_X = \Exc (\pi_t) \cup \Exc (\pi_u)$.
We prove (1).
Replacing coordinates we may assume $\msp = \msp_x$.
Set 
\[
\Xi := (y = z = s = 0) \cap X.
\]
Since $a_5 < d_1 < 2 a_4$ and $d_2 = a_4 + a_5 < 3 a_4$, we can write
\[
\begin{split}
F_1 (x,0,0,0,t,u) &= \alpha u x^{d_1-a_5} + \beta t x^{d_1-a_4}, \\
F_2 (x,0,0,0,t,u) &= u t + \gamma u x^{d_2-a_5} + \delta t^2 x^{d_2 - 2 a_4} + \varepsilon t x^{d_2 - a_4},
\end{split}
\]
for some $\alpha, \beta,\dots,\varepsilon \in \mbC$. 
We have $(\alpha,\gamma) \ne (0,0)$ since $\msp \notin \Exc (\pi_u)$.

Suppose that $\alpha \ne 0$.
Then we may assume $\alpha = 1$, and then by replacing $F_2$ with $F_2 - \gamma x^{d_2-d_1} F_1$ and replacing $u$, we may assume that
\[
\begin{split}
F_1 (x,0,0,0,t,u) &= u x^{d_1 - a_5}, \\
F_2 (x,0,0,0,t,u) &= u t + \delta t^2 x^{d_2-2 a_4} + \varepsilon t x^{d_2-a_4}.
\end{split}
\] 
We have $(\delta,\varepsilon) \ne (0,0)$ since $\msp \notin \Exc (\pi_t)$.
Thus $\Xi$ is a finite set of points and $a_3 A$ is a $\msp$-isolating class.

Suppose that $\alpha = 0$ and $\beta \ne 0$.
Then $\gamma \ne 0$ since $\msp \notin \Exc (\pi_u)$.
It is easy to see that $\Xi$ is a finite set of points and $a_3 A$ is a $\msp$-isolating class.

Suppose that $\alpha = \beta = 0$.
Then $\gamma \ne 0$ since $\msp \notin \Exc (\pi_u)$.
Rescaling $x$ and replacing $u$, we may assume $\gamma = 1$ and $\delta = 0$ so that
\[
F_2 (x,0,0,0,t,u) = u t + u x^{d_2-a_5} + \varepsilon t x^{d_2 - a_4}.
\]
We have $\varepsilon \ne 0$ since $\msp \notin \Exc (\pi_t)$.
We set
\[
\Gamma := \Xi = (y = z = s = u t + u x^{d_2-a_5} + \varepsilon t x^{d_2-a_4} = 0),
\]
which is an irreducible and reduced curve with $(A \cdot \Gamma) = (a_4+a_5)/a_4 a_5$ and $\mult_{\msp} (\Gamma) = 1$.
Thus $a_3 A$ is a $(\msp,\Gamma)$-isolating class and (1) is proved.

We prove (2).
We write $\msp = (0\!:\!1\!:\!\lambda\!:\!\mu\!:\!\nu\!:\!\theta)$ for some $\lambda, \mu, \nu, \theta \in \mbC$.

Suppose $\msi = 52$.
We may assume $\theta = 0$ by replacing $u \mapsto u - \theta y^4$.
Note that $(\lambda,\mu) \ne (0,0)$ since $\msp \notin \Exc (\pi_t)$.
If $\lambda \ne 0$ (resp.\ $\lambda = 0$ and $\mu \ne 0$), then $6A$ (resp.\ $10A$) is a $\msp$-isolating class by Lemma \ref{lem:isolstr}.

Suppose $\msi = 63$.
If $\lambda \ne 0$ (resp.\ $\lambda = 0$ and $\mu \ne 0$), then $12A$ (resp.\ $15A$) is a $\msp$-isolating class by Lemma \ref{lem:isolstr}.
Suppose that $\lambda = \mu = 0$.
By the quasi-smoothness of $X$, either $y^4 \in F_1$ or $y^5 \in F_2$.
But the condition $\msp \in X$ implies $y^4 \notin F_1$.
Thus $y^5 \in F_1$ and we can write
\[
F_2 (0,y,0,0,t,u) = t u - \nu \theta y^5,
\]
and we have $\nu \theta \ne 0$.
It follows that $\{x,z,s\}$ is a $(\msp,\Gamma)$-isolating set, where 
\[
\Gamma = (x = z = s = t u - \nu \theta y^5 = 0) \subset X
\] 
is clearly irreducible and reduced with $(A \cdot \Gamma) = 5/56$ and $\mult_{\msp} (\Gamma) = 1$.
This completes the proof. 
\end{proof}

\begin{Lem} \label{lem:isol-iibc1}
Let $X$ be a member of family No.~$\msi$ with $\msi \in \IFiib$ and $\msp \in X$ a nonsingular point of $X$.
Suppose that we are in one of the following cases.
\begin{enumerate}
\item $\msi \in \IFiib$ and $\msp \notin (L_{xy} \cup \Upsilon_X)$.
\item $\msi \in \IFiic$ and $\msp \notin (H_x \cup \Upsilon_X)$.
\end{enumerate}
Then $a_3 A$ is either a $\msp$-isolating class or a $(\msp,\Gamma)$-isolating class, where $\Gamma$ is an irreducible and reduced curve with $(A \cdot \Gamma) = 2/a_5$ and $\mult_{\msp} (\Gamma) = 1$.  
\end{Lem}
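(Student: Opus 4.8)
The plan is to adapt the strategy of the proof of Lemma~\ref{lem:isol-iia}(1). First I would reduce to the case $\msp = \msp_x$: in case~(2) we have $\msp \notin H_x$, so $x(\msp) \neq 0$ and, subtracting suitable powers of $x$ from the remaining coordinates, we may assume $\msp = \msp_x$; in case~(1) we have $a_1 = 1$ and $\msp \notin L_{xy}$, so after a linear change of coordinates inside $\langle x, y \rangle$ we again arrange $x(\msp) \neq 0$ and reduce to $\msp = \msp_x$ as before. Taking $y, z, s$ as candidate isolating polynomials, of degrees $a_1 \le a_2 \le a_3$, Lemma~\ref{lem:isolsetclass} reduces the statement to a study of the scheme
\[
\Xi := (y = z = s = 0) \cap X,
\]
which lies on the surface $\Pi := \mbP(1, a_4, a_5)$ and is cut out there by the two restrictions $\bar{F}_i := F_i(x, 0, 0, 0, t, u)$. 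Concretely, I must show that $\Xi$ is either finite, in which case $a_3 A$ is a $\msp$-isolating class, or that its unique component through $\msp$ is a curve $\Gamma$ with the asserted numerics.

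The decisive input is a numerical coincidence to be read off from the tables of Section~\ref{sec:deffam}: for every family in $\IFiib \cup \IFiic$ exactly one of $d_1, d_2$ equals $2 a_4$, and moreover $a_5 < 2 a_4$. Write $\bar{F}_{i_0}$ for the restriction of degree $2 a_4$ and $\bar{F}_{i_1}$ for the other. Enumerating the monomials in $x, t, u$ of each degree gives $\bar{F}_{i_0} = \alpha t^2 + \beta t x^{a_4} + \gamma u x^{2 a_4 - a_5}$, where $\alpha \neq 0$ by the generality of $X$ (equivalently $t^2 \in F_{i_0}$), while the hypothesis $\msp \notin \Upsilon_X$, which excludes $\msp \in \Exc(\pi_u)$, forces $\gamma \neq 0$. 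When $2 a_4 = d_2$, the lower restriction $\bar{F}_{i_1} = \bar{F}_1$ has degree $d_1 < 2 a_4$, contains no $t^2$, and is divisible by $x^{d_1 - a_5}$; using its (nonzero) $u$-coefficient I solve for $u$ in the chart $x \neq 0$ and substitute into $\bar{F}_{i_0}$, obtaining $t \cdot (\alpha t + \cdots)$. Hence $\Xi$ is finite and $a_3 A$ is a $\msp$-isolating class.

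When $2 a_4 = d_1$, so $\bar{F}_{i_0} = \bar{F}_1$, degree reasons force $\bar{F}_2 = x^{d_2 - d_1} G$ with $\deg G = 2 a_4$ and $t^2 \in G$. Solving $\bar{F}_1 = 0$ for $u$ on $x \neq 0$ and substituting into $G$ yields either one further point, so that $\Xi$ is finite, or---precisely when $G$ is proportional to $\bar{F}_1$---the zero polynomial, in which case $\Xi = (\bar{F}_1 = 0) =: \Gamma$. Since $\gamma \neq 0$, on $x \neq 0$ the curve $\Gamma$ is the graph $u = -(\alpha t^2 + \beta t x^{a_4})/(\gamma x^{2 a_4 - a_5})$; hence $\Gamma$ is irreducible and reduced, and it is smooth at $\msp_x$, so $\mult_{\msp}(\Gamma) = 1$. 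Finally, as an effective divisor of class $\mcO_{\Pi}(2 a_4)$ on $\Pi$, the curve $\Gamma$ satisfies $(A \cdot \Gamma) = 2 a_4/(a_4 a_5) = 2/a_5$, as required.

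The main obstacle is the family-by-family bookkeeping behind these two clean statements: one must verify the coincidence $2 a_4 \in \{d_1, d_2\}$ together with the exact monomial content of $\bar{F}_1$ and $\bar{F}_2$ for each of the roughly nineteen families, the cases $a_4 = a_5$ (for instance families No.~$50$ and~$58$) requiring a slightly different enumeration in which $\bar{F}_{i_0}$ is a binary quadric in $t, u$. More delicate is to confirm that the non-degeneracy inputs genuinely follow from the hypotheses: the point is that $\Upsilon_X$ need not contain $\Exc(\pi_t)$---for several families only $\msp_u$ is distinguished---so the non-vanishing of the coefficient $\alpha$ of $t^2$ cannot be deduced from the position of $\msp$ and must instead be extracted from the generality (quasi-smoothness) of $X$. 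Keeping track of which hypothesis controls each coefficient is where the care is needed.
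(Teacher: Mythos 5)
Your proposal follows the same skeleton as the paper's proof (reduce to $\msp = \msp_x$, study $\Xi := (y = z = s = 0) \cap X$ inside $\mbP(1,a_4,a_5)$, recognize that the only possible curve is one of degree $2a_4/(a_4a_5) = 2/a_5$), but your key non-degeneracy claim is false, and it is not a repairable slip: it misplaces the curve case. Write $\lambda$, $\mu$ for the $u$-coefficients of $\bar F_1$, $\bar F_2$. The hypothesis $\msp \notin \Upsilon_X$ (hence $\msp \notin \Exc(\pi_u)$) says only that the closure of the $\pi_u$-fibre through $\msp_x$, which is cut out on the line $(y=z=s=t=0) \cong \mbP(1,a_5)$ by $\lambda u x^{d_1-a_5}$ and $\mu u x^{d_2-a_5}$, is finite; equivalently $(\lambda,\mu) \ne (0,0)$. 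It does \emph{not} force the $u$-coefficient of the distinguished degree-$2a_4$ restriction $\bar F_{i_0}$ to be nonzero, and it certainly does not give you, in your case $2a_4 = d_2$, the nonvanishing of the $u$-coefficient of $\bar F_{i_1} = \bar F_1$ that you use to ``solve for $u$''. The paper handles exactly this disjunction by a case split: $\lambda = 0$ (so $\mu \ne 0$) versus $\lambda \ne 0$, in which case one normalizes $\mu = 0$ by replacing $F_2$ with $F_2 - \lambda^{-1}\mu x^{d_2-d_1} F_1$.

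The concrete failure: when $2a_4 = d_2$ (families $54, 56, 57, 72, 73, 74$, and $50, 58$ with $a_4 = a_5$) you conclude that $\Xi$ is always finite, but the sub-case $\bar F_1 \equiv 0$ --- i.e.\ $\lambda = 0$ and the $t$-coefficient of $\bar F_1 = \lambda u x^{d_1-a_5} + \nu t x^{d_1-a_4}$ also vanishes, which forces $\mu \ne 0$ but is otherwise allowed --- is not excluded by your hypotheses: the resulting curve $\Xi = (\bar F_2 = 0)$, with $\bar F_2 = \mu u x^{d_2-a_5} + t^2 + \beta t x^{d_2-a_4}$, maps onto a curve under $\pi_u$ (and under $\pi_t$ in the $a_4=a_5$ cases), so it is not contracted, hence not contained in $\Upsilon_X$, and $\msp$ may perfectly well lie on it. This is precisely where the $(\msp,\Gamma)$-alternative of the lemma with $(A\cdot\Gamma) = d_2/(a_4a_5) = 2/a_5$ is needed; it occurs in the case you declared finite. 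Conversely, when $2a_4 = d_1$ and $\lambda = 0$ (allowed, since then $\mu \ne 0$), your step of solving $\bar F_1 = 0$ for $u$ is impossible; there the correct conclusion is that $\Xi$ \emph{is} finite, because $\bar F_1 = t(t + \nu x^{a_4})$ and $\mu \ne 0$ pins down $u$ near $\msp$. A smaller inaccuracy: for families $53$ and $62$ one has $a_4 + a_5 = d_2$, so $ut$ occurs in $\bar F_2$, which is then not of the form $x^{d_2-d_1}G$; the correct normal form carries this extra term, and it must be taken into account when deciding whether the substitution output vanishes identically.
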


\begin{proof}
Let $\msp$ be as in the statement.
Then, by a choice of coordinates we may assume $\msp = \msp_x$.
We set 
\[
\Xi := (y = z = s = 0) \cap X.
\]
We will show that either $\Xi$ is a $\msp$-isolating set or it is an irreducible and reduced curve with $(A \cdot \Xi) = 2/a_5$ and $\mult_{\msp} (\Xi) = 1$.
By Lemma \ref{lem:isolsetclass}, in the former case $a_3 A$ is a $\msp$-isolating class and in the latter case $a_3 A$ is a $(\msp,\Gamma)$-isolating class by setting $\Gamma := \Xi$.

{\bf Case 1}: $\msi \in \{40,43,53,61,62,65,67,70,79,80,83\}$.
In this case we have $2 a_4 = d_1 < a_4 + a_5$, $d_2 < 3 a_4$, $d_2 < 2 a_5$ and $d_2 \le a_4 + a_5$.
Hence we can write
\[
\begin{split}
F_1 (x,0,0,0,t,u) &= \lambda u x^{d_1-a_5} + t^2 + \alpha t x^{d_1-a_4}, \\
F_2 (x,0,0,0,t,u) &= \mu u x^{d_2 - a_5} + \beta u t + \gamma t^2 x^{d_2 - 2 a_4} + \delta t x^{d_2 - a_4},
\end{split}
\]
for some $\lambda,\mu,\alpha,\beta,\gamma,\delta \in \mbC$.
We have $(\lambda,\mu) \ne (0,0)$ since $\msp \notin \Exc (\pi_u)$.

Suppose that $\lambda = 0$.
Then $\mu \ne 0$ and it is easy to see that $\Xi$ is a $\msp$-isolating set.

Suppose that $\lambda \ne 0$.
Replacing $F_2$ by $F_2 - \lambda^{-1} \mu x^{d_2-d_1} F_1$, we may assume that $\mu = 0$.
If $(\beta,\gamma,\delta) = (0,0,0)$, then 
\[
\Gamma := \Xi = (y = z = s = \lambda u x^{d_1-a_5} + t^2 + \alpha t x^{d_1-a_4} = 0)
\] 
is an irreducible and reduced curve with $(A \cdot \Gamma) = d_1/a_4 a_5 = 2/a_5$ and $\mult_{\msp} (\Gamma) = 1$.
If $(\beta,\gamma,\delta) \ne (0,0,0)$, then it is straightforward to see that $\Xi$ is a $\msp$-isolating set.

{\bf Case 2}: $\msi \in \{54,56,57,72,73,74\}$.
In this case $d_1 < d_2 = 2 a_4 < a_4 + a_5$ and we we can write
\[
\begin{split}
F_1 (x,0,0,0,t,u) &= \lambda u x^{d_1-a_5} + \alpha t x^{d_1 - a_4}, \\
F_2 (x,0,0,0,t,u) &= \mu u x^{d_2 - a_5} + t^2 + \beta t x^{d_2-a_4},
\end{split}
\]
for some $\lambda,\mu,\alpha,\beta \in \mbC$.
Since $\msp \notin \Exc (\pi_u)$, we have $(\lambda,\mu) \ne (0,0)$.

Suppose that $\lambda = 0$.
If in addition $\alpha = 0$, then 
\[
\Gamma := \Xi = (y = z = s = \mu u x^{d_2-a_5} + t^2 + \beta t x^{d_2-a_4} = 0)
\]
is an irreducible and reduced curve with $(A \cdot \Gamma) = d_2/a_4 a_5 = 2/a_5$ and $\mult_{\msp} (\Gamma) = 1$ since $\mu \ne 0$. 
If $\alpha \ne 0$, then $\Xi$ is clearly a $\msp$-isolating set since $\mu \ne 0$.

Suppose that $\lambda \ne 0$.
Then we may assume $\mu = 0$ by replacing $F_2$ with $F_2 - \lambda^{-1} \mu x^{d_2-d_1} F_1$ and it is straightforward to see that $\Xi$ is a $\msp$-isolating set.

{\bf Case 3}: $\msi \in \{50,58\}$.
Then we have $a_3 = 5, a_4 = a_5 = 7$ and $d_2 = 14$.
By choosing $u, t$ appropriately, we can write
\[
\begin{split}
F_1 (x,0,0,0,t,u) &= \lambda u x^{d_1-5} + \nu t x^{d_1-5}, \\
F_2 (x,0,0,0,t,u) &= u t + \mu u x^7 + \theta t x^7,
\end{split}
\]
for some $\lambda,\mu,\nu,\theta \in \mbC$. 
Note that $(\lambda,\mu) \ne (0,0)$ and $(\nu,\theta) \ne (0,0)$ since $\msp \notin \Exc (\pi_u)$ and $\msp \notin \Exc (\pi_t)$.

Suppose that $\lambda = 0$.
If $\nu = 0$, then $\mu \ne 0$ and $\theta \ne 0$ and 
\[
\Gamma := \Xi = (y = z = s = u t + \mu u x^7 + \theta t x^7 = 0)
\]
is an irreducible and reduced curve with $(A \cdot \Gamma) = 2/7 = 2/a_5$.
If $\nu \ne 0$, then $\Xi$ is $\msp$-finite since $\mu \ne 0$.

Suppose that $\lambda \ne 0$.
Replacing $F_2$ with $F_2 - \lambda^{-1} \mu x^{d_2-d_1} F_1$, we may assume $\mu = 0$.
Then it is straightforward to see that $\Xi$ is a $\msp$-isolating set since $(\nu,\theta) \ne (0,0)$.  
This completes the proof.
\end{proof}

\begin{Lem} \label{lem:isol-iic}
Let $X$ be a member of family No.~$\msi$ with $\msi \in \IFiic$ and $\msp \in H_x \setminus (L_{xy} \cup \Upsilon_X)$ a nonsingular point.
Then $l A$ is a $\msp$-isolating class for some $l$ with $l \le 1/(A^3)$.
\end{Lem}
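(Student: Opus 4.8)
The plan is to argue family by family over the sixteen indices in $\IFiic$, exhibiting in each case an explicit finite set of $\msp$-isolating homogeneous polynomials and then invoking Lemma~\ref{lem:isolsetclass}, which turns the maximal degree $l$ of such polynomials into a $\msp$-isolating class $lA$. Since $\msp \in H_x$ we have $x(\msp) = 0$, and since $\msp \notin L_{xy}$ the coordinate $y$ does not vanish at $\msp$; after rescaling I would normalise $y(\msp) = 1$. The coordinate $x$, of degree $1$, is then itself an isolating polynomial vanishing at $\msp$, so the genuine task is to isolate $\msp$ inside $H_x$. The degree I produce must satisfy $l \le 1/(A^3)$, and two facts control this: by Remark~\ref{rem:numcharact} membership in $\IFii$ gives $a_1 a_3 (A^3) \le 1$, i.e.\ $a_1 a_3 \le 1/(A^3)$, which serves as a benchmark, while in general $1/(A^3) = a_1 a_2 a_3 a_4 a_5/(d_1 d_2)$ is the inequality I must not violate.

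First I would treat the generic subcase, in which a second small-weight coordinate---typically $z$ of degree $a_2$, or else $s$---also fails to vanish at $\msp$. After reordering the coordinates, $y$ and this second coordinate play the roles of the two pivots in Lemma~\ref{lem:isolstr}, and the semigroup $\langle a_1, a_2 \rangle$ keeps all the exponents small: for each remaining coordinate $x_i$ with $x_i(\msp) \ne 0$ one records the least multiple $l_i = a_i k_i$ with $k_i a_i \in \langle a_1, a_2 \rangle$, for those with $x_i(\msp) = 0$ one records $x_i$ itself (degree $a_i$), and $x$ contributes only degree $1$. One then verifies, family by family, that $l = \max_i l_i \le 1/(A^3)$; because two independent pivots are available, even the top coordinates $t, u$ receive small exponents (for instance in family No.~$70$ the weight $a_5 = 11$ already lies in $\langle 3, 4 \rangle$, so it contributes only degree $11$). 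Throughout, the hypothesis $\msp \notin \Upsilon_X$---which equals $\Exc(\pi_u)$, or a union of such loci for the families with several distinguished points---guarantees that the relevant projection is generically finite at $\msp$, so that the constructed common zero locus is indeed finite.

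The hard part is the degenerate subcases, where the small coordinates $z, s$ both vanish at $\msp$ so that only $y$ is left as a usable pivot. Then the naive isolating polynomial for a top coordinate $x_i$ has degree $\lcm(a_1, a_i)$, which can overshoot the bound: family No.~$70$, with $\lcm(a_1, a_4) = \lcm(3,7) = 21 > 165/8 = 1/(A^3)$, is the representative obstruction. Here I would abandon the pivot construction for that coordinate and instead draw a low-degree relation out of the defining equations, exactly as in the proof of Lemma~\ref{lem:isol-iibc1}: restricting $F_1, F_2$ to the coordinate stratum through $\msp$ and using quasi-smoothness together with $\msp \notin \Upsilon_X$ to force the needed monomials to appear, one obtains either a finite isolating locus or an explicit curve whose defining equation has degree below $1/(A^3)$. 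The main obstacle, and the bulk of the work, is precisely this bookkeeping: for every family and every vanishing pattern of $(z, s, t, u)$ at $\msp$ one must confirm both that the equations supply an isolating polynomial and that the resulting $l$ obeys $l \le 1/(A^3)$.
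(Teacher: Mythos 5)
Your generic subcases track the paper's proof closely: the paper also splits off the families handled by the projection argument (Lemma \ref{lem:findisol}, giving $l=\lcm(a_1,a_k)$-type bounds for Nos.~53, 62, 72, 79, 80, 83) and otherwise applies Lemma \ref{lem:isolstr} with $y$ and a second non-vanishing small coordinate as pivots, exactly as you describe. The problem is your treatment of the degenerate subcase where $z(\msp)=s(\msp)=0$. There you propose to follow Lemma \ref{lem:isol-iibc1} and accept, as one possible outcome, ``an explicit curve'' --- i.e.\ a $(\msp,\Gamma)$-isolating class. That outcome does not prove the statement: the lemma asserts the existence of a genuine $\msp$-isolating class $lA$ with $l \le 1/(A^3)$, and this stronger conclusion is what the paper uses downstream (in Case 3 of Proposition \ref{prop:nonsingpart2ii} it applies Lemma \ref{lem:exclG} with $S = H_x$, $c=1$, which needs a $\msp$-isolating class; the curve-based Lemma \ref{lem:criwisol} is not invoked for these points, and you never verify its numerical hypotheses either). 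The degenerate case is not vacuous: for No.~50 the stratum $(x=z=s=0)\cap X$ is the curve $tu+\mu y^7=0$ in $\mbP(2,7,7)$, for Nos.~57, 74 it is $t^2+\mu y^{d_2/2}=0$, and for No.~70 with $\varepsilon = 0$ it is $\lambda uy+\mu t^2=0$, each an honest curve through $\msp$, so your construction genuinely terminates in the insufficient branch.

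The missing idea is the paper's point-tailored extra polynomial. Writing $\msp=(0\!:\!1\!:\!0\!:\!0\!:\!\gamma\!:\!\delta)$, the paper first argues (using quasi-smoothness and the explicit restricted equations) that $\gamma$, $\delta$ cannot degenerate badly, and then adjoins one more homogeneous polynomial vanishing at $\msp$ but not identically on the residual curve: $\delta t-\gamma u$ of degree $a_5=7$ for No.~50, $\gamma u-\delta t y$ of degree $a_5$ for Nos.~57 and 74, and $\gamma\delta y^6-tu$ of degree $18$ for No.~70. Each cuts the curve down to a finite set, so $\{x,z,s,\cdot\}$ becomes a $\msp$-isolating set of polynomials, and the resulting $l$ ($a_5$, resp.\ $18$) still satisfies $l \le 1/(A^3)$ (e.g.\ $18 \le 165/8$ for No.~70). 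Without this step your argument proves only the weaker, Lemma \ref{lem:isol-iibc1}-style dichotomy, not the stated lemma.
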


\begin{proof}
{\bf Case 1}: $\msi \in \{53, 62, 72, 79, 80, 83\}$.
By Lemma \ref{lem:findisol}, we see that $l A$ is a $\msp$-isolating class, where
\[
l = \max_{k \in \{2,3,4\}} \{\lcm (a_1, a_k)\}.
\]
Suppose that $\msi \in \{53, 62, 72\}$.
Then we have $l = a_1 a_3 \le 1/(A^3)$.
Suppose that $\msi \in \{79,80,83\}$.
Then we have $l = a_1 a_4 \le 1/(A^3)$ by direct computations.
Thus the assertion follows in this case.

In the following we write $\msp = (0\!:\!1\!:\!\alpha\!:\!\beta\!:\!\gamma\!:\!\delta) \in X$, where $\alpha,\beta,\gamma,\delta \in \mbC$.
If the weight of a homogeneous coordinate $v \in \{z,s,t,u\}$ is even, then we may assume that the corresponding coordinate among $\{\alpha,\beta,\gamma,\delta\}$ is zero by replacing the coordinate $v$.
For example, if $a_3$ is even, then we may assume $\beta = 0$ by replacing $s \mapsto s- \beta y^{a_3/2}$.

{\bf Case 2}: $\msi \in \{43,50,56,57,65,74\}$.
In this case we have $a_1 = 2$ and $a_2 = 3$.
For a positive odd integer $e$, the semigroup $\langle 2,e\rangle \subset \mbZ$ generated by $2$ and $e$ contains the set $\mbZ_{\ge e-1}$.
It follows from Lemma \ref{lem:isolstr} that $l A$ isolates $\msp$, where
\[
l = \begin{cases}
\max \{6,a_5\} = a_5, & \text{if $\alpha \ne 0$}, \\
\max \{2 a_3, a_5\}, & \text{if $\beta \ne 0$ and $a_3$ is odd}.
\end{cases}
\]
Note that we are assuming that $\beta = 0$ if $a_3$ is even.
It remains to consider $\msp = (0\!:\!1\!:\!0\!:\!0\!:\!\gamma\!:\!\delta)$.
\begin{itemize}
\item Suppose that $\msi \in \{43,56,65\}$.
Then $a_5$ is even and hence $\msp = (0\!:\!1\!:\!0\!:\!0\!:\!\gamma\!:\!0)$.
In this case the set $(x = z = s = u = 0) \cap X$ is a finite set of points since $t^2$ is contained in $F_1$ or $F_2$.
Thus $a_5 A$ is a $\msp$-isolating class. 
\item Suppose that $\msi = 50$.
Then, by a suitable choice of $t, u$, we have
\[
\begin{split}
F_1 (0,y,0,0,t,u) &= \lambda y^5, \\
F_2 (0,y,0,0,t,u) &= t u + \mu y^7,
\end{split}
\]
for some $\lambda, \mu \in \mbC$.
We have $\lambda = 0$ since $\msp \in X$.
Then we have $\mu \ne 0$ since $X$ is quasi-smooth.
Note that if $\gamma = 0$, then we have $\msp = \msp_u$ since $F_2 (0,y,0,0,t,u) = t u + \mu y^7$ with $\mu \ne 0$.
Similarly, if $\delta = 0$, then $\msp = \msp_t$.
Hence $\gamma \delta \ne 0$ since $\msp$ is a nonsingular point of $X$.
It follows that the set
\[
\{x, z, s, \delta t - \gamma u \}
\]
is a $\msp$-isolating set.
Thus $a_5 A$ is a $\msp$-isolating class.
\item Suppose that $\msi \in \{57,74\}$.
In this case $a_1 = 2$, $a_5 = a_1 + a_4$, $d_2 = 2 a_4$, and we can write
\[
\begin{split}
F_1 (0,y,0,0,t,u) &= \lambda y^{d_1/2}, \\
F_2 (0,y,0,0,t,u) &= t^2 + \mu y^6,
\end{split}
\]
for some $\lambda, \mu \in \mbC$.
We have $\lambda = 0$ since $\msp \in X$, and then $\mu \ne 0$ since $X$ is quasi-smooth.
Note that $\gamma \ne 0$ since $F_2 (0,y,0,0,t,u) = t^2 + \mu y^{d_2/2}$.
We see that
\[
\{x, z, s, \gamma u - \delta t y \}
\]
is a $\msp$-isolating set.
It follows that $a_5 A$ is a $\msp$-isolating set.
\end{itemize}
As a summary of Case 2, for any nonsingular point $\msp \in H_x \setminus (L_x \cup \Upsilon)$, we conclude that $l A$ is a $\msp$-isolating class for some $l$ with $l \le \max \{ 2 a_3, a_5\}$.
It is straightforward to check that $\max \{ 2 a_3, a_5\} \le 1/(A^3)$.

{\bf Case 3}: $\msi \in \{58, 67, 70, 73\}$.
In this case we have $a_1 = 3, a_2 = 4, a_3 = 5$.
By Lemma \ref{lem:isolstr}, we can compute $\msp$-isolating class as follows: if $\alpha \ne 0$, then $12 A$ is a $\msp$-isolating class, and if $\beta \ne 0$, then $15A$ is a $\msp$-isolating class.
Thus $l A$ is a $\msp$-isolating class if either $\alpha \ne 0$ or $\beta \ne 0$.
We compute $\msp$-isolating classes when $\alpha = \beta = 0$.
We set $\Xi = (x = z = s = 0) \cap X$.
Note that $\msp \in \Xi$.

Suppose that $\msi \in \{58, 67, 73\}$.
If $\msi \in \{58, 73\}$, then $F_1 (0,y,0,0,t,u) = \lambda y^{d_1/3}$ for some $\lambda \in \mbC$.
If $\msi = 67$, then $F_2 (0,y,0,0,t,u) = \lambda y^5$ for some $\lambda \in \mbC$.
Note that $\lambda \ne 0$ by quasi-smoothness of $X$.
This implies that $\Xi$ is a finite set of points.
Hence $\Xi$ is a $\msp$-isolating set and $5 A$ is a $\msp$-isolating class.

Suppose that $\msi = 70$.
We can write
\[
\begin{split}
F_1 (0,y,0,0,t,u) &= \lambda u y + \mu t^2, \\
F_2 (0,y,0,0,t,u) &= \varepsilon t y^3,
\end{split}
\]
for some $\lambda, \mu, \varepsilon \in \mbC$.
Note that $\lambda \ne 0$ and $\mu \ne 0$ since $X$ is quasi-smooth.
If $\varepsilon \ne 0$, then $\Xi$ is a finite set of points and thus $5 A$ is a $\msp$-isolating class.
Suppose that $\varepsilon = 0$.
Recall that $\msp = (0\!:\!1\!:\!0\!:\!\gamma\!:\!\delta) \in \Xi$.
If one of $\gamma, \delta$ is $0$, then $\gamma = \delta = 0$, that is, $\msp = \msp_y$ is a singular point.
Thus $\gamma \delta \ne 0$.
Then the set
\[
\{x, z, s, \gamma \delta y^6 - t u \}
\]
is a $\msp$-isolating set.
It follows that $18A$ is a $\msp$-isolating class.
As a summary, if $\msi \in \{58, 67, 73\}$ (resp.\ $\msi = 70$), then $l A$ is a $\msp$-isolating class for some $l \le 15$ (resp.\ $l \le 18$) and we have $15 \le 1/(A^3)$ (resp.\ $18 \le 1/(A^3))$.
This completes the proof.
\end{proof}

\begin{Prop} \label{prop:nonsingpart2ii}
Let $X$ be a member of family No.~$\msi$ with $\msi \in \IFii$.
Then $\lct_{\msp} (X) \ge 1$ for any nonsingular point $\msp \in X$.
\end{Prop}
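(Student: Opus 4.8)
The plan is to follow the pattern of the proof of Proposition~\ref{prop:nonsingpart2i}, feeding the isolating-class computations of Lemmas~\ref{lem:isol-iia}, \ref{lem:isol-iibc1} and~\ref{lem:isol-iic} into the log canonical threshold criteria of Lemmas~\ref{lem:exclG} and~\ref{lem:criwisol}. First I would clear the two loci already treated: a nonsingular point $\msp \in L_{xy}$ satisfies $\lct_{\msp}(X) \ge 1$ by Proposition~\ref{prop:compLxy}, and a nonsingular point $\msp \in \Upsilon_X$ satisfies $\lct_{\msp}(X) \ge 1$ by Lemma~\ref{lem:lctnonsingptexcii}. It therefore remains to treat a nonsingular point $\msp \in X \setminus (L_{xy} \cup \Upsilon_X)$. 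For such a point the approach is uniform: produce, via the three isolating lemmas, either a $\msp$-isolating class $lA$ or a $(\msp,\Gamma)$-isolating class $lA$ with $\Gamma$ an irreducible reduced curve of degree $d = (A \cdot \Gamma)$ and $m = \mult_{\msp}(\Gamma) = 1$, together with a prime divisor $S \sim_{\mbQ} cA$ through $\msp$ with $(X, \tfrac{1}{c} S)$ log canonical at $\msp$; then apply Lemma~\ref{lem:exclG}(2) (checking $c l (A^3) \le 1$) in the first case, and Lemma~\ref{lem:criwisol} (checking~(2-a) or~(2-b)) in the second.

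For the prime divisor $S$ I would take the tangent divisor $T_{\msp}$ when $\msp \notin H_x$ and the hyperplane section $H_x$ when $\msp \in H_x$. In the first case $S = T_{\msp} \sim_{\mbQ} a_1 A$, so $c = a_1$, and the pair $(X, \tfrac{1}{a_1} T_{\msp})$ is log canonical at $\msp$ by Condition~\ref{cdsp}(2): when $a_1 > 1$ Lemma~\ref{lem:multtang} bounds $\mult_{\msp}(T_{\msp}) \le 2$, whence $\lct_{\msp}(X, T_{\msp}) \ge \tfrac12 \ge \tfrac{1}{a_1}$, and when $a_1 = 1$ we have $c = 1$ and Lemma~\ref{lem:lcttang} gives $\lct(X, T_{\msp}) = 1$. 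In the second case $S = H_x \sim_{\mbQ} A$, so $c = 1$; since $H_x$ may be assumed quasi-smooth by Condition~\ref{cd}(1) and $\msp$ is a nonsingular point of $X$, the surface $H_x$ is smooth at $\msp$ and $(X, H_x)$ is log canonical there.

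It then remains to run the three subfamilies and verify the numerics. For $\msi \in \IFiia = \{52, 63\}$ and $\msp \notin H_x$, Lemma~\ref{lem:isol-iia}(1) supplies the $(\msp,\Gamma)$-isolating class $a_3 A$ with $d = (a_4 + a_5)/(a_4 a_5)$, which lands in case~(2-b) of Lemma~\ref{lem:criwisol} since $a_1 a_3 (A^3) \le 1$; the cases $\msp \in H_x$ use $S = H_x$ with the $\msp$-isolating class $a_1 a_3 A$ (No.~52) or with $15A$ and the $(\msp,\Gamma)$-isolating class $5A$ (No.~63). For $\msi \in \IFiib = \{40, 54, 61\}$ one has $a_1 = 1$, so $S = T_{\msp}$ with $c = 1$, and Lemma~\ref{lem:isol-iibc1}(1) gives $a_3 A$ either as a $\msp$-isolating class (for which $a_3 (A^3) \le 1$ suffices) or as a $(\msp,\Gamma)$-isolating class with $d = 2/a_5$ (landing in case~(2-a), valid because $2 a_3 \le a_5$ and $(A^3) \le 2/a_5$). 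For $\msi \in \IFiic$ I would use Lemma~\ref{lem:isol-iibc1}(2) for $\msp \notin H_x$ (again $a_3 A$, now with $S = T_{\msp}$, $c = a_1$, the condition being $a_1 a_3 (A^3) \le 1$ or the appropriate alternative of Lemma~\ref{lem:criwisol}) and Lemma~\ref{lem:isol-iic} for $\msp \in H_x$, where the $\msp$-isolating class $lA$ has $l \le 1/(A^3)$, so that $c l (A^3) = l (A^3) \le 1$ with $S = H_x$.

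The heart of the argument, and its main obstacle, is the case-by-case bookkeeping: for each of the twenty-one families one reads off $a_1, a_3, a_4, a_5$ and $(A^3)$ from Tables~\ref{table:codim2Fanos-i} and~\ref{table:codim2Fanos-ii} and confirms the relevant inequality, several of which hold only with equality (for instance No.~43, where $a_1 a_3 (A^3) = 1$ and also $d = c m (A^3)$). The two genuinely delicate points are: deciding, in the mixed conclusions of Lemmas~\ref{lem:isol-iia}, \ref{lem:isol-iibc1} and~\ref{lem:isol-iic}, which of the $\msp$-isolating or $(\msp,\Gamma)$-isolating alternatives occurs, and hence whether Lemma~\ref{lem:exclG} or~\ref{lem:criwisol} is the right tool; and confirming that $T_{\msp}$ (resp.\ $H_x$) is available as a genuine prime divisor $S$ carrying the asserted log canonicity, which rests on Condition~\ref{cdsp}(2) for $\msp \notin H_x$ and on the quasi-smoothness of $H_x$ for $\msp \in H_x$. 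That these inequalities do hold, often sharply, is precisely the numerical content encoded in Remark~\ref{rem:numcharact}.
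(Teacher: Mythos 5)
Your proposal is correct and takes essentially the same route as the paper's own proof: the same reduction of the problem via Proposition~\ref{prop:compLxy} and Lemma~\ref{lem:lctnonsingptexcii}, the same case division over $\IFiia$, $\IFiib$, $\IFiic$ and over $\msp \in H_x$ or $\msp \notin H_x$, the same choices $S = T_{\msp}$ (justified by Lemma~\ref{lem:multtang} or~\ref{lem:lcttang}) or $S = H_x$, and the same applications of Lemmas~\ref{lem:exclG} and~\ref{lem:criwisol} fed by Lemmas~\ref{lem:isol-iia}, \ref{lem:isol-iibc1}, \ref{lem:isol-iic} and the numerics of Remark~\ref{rem:numcharact}. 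The only (harmless) differences are presentational: you make explicit the log canonicity justifications and the dichotomy between alternatives (2-a) and (2-b) of Lemma~\ref{lem:criwisol}, which the paper leaves implicit in its Case 3 (indeed, for families such as No.~50, 53, 57, 58, 62, 74, 80 one has $2a_3 > a_5$ and must fall back on (2-b), exactly as your hedge allows).
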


\begin{proof}
By Proposition \ref{prop:compLxy} and Lemma \ref{lem:lctnonsingptexcii}, it remains to prove $\lct_{\msp} (X) \ge 1$ for nonsingular points on $X \setminus (L_{xy} \cup \Upsilon_X)$.
Recall that $a_1 a_3 (A^3) \le 1$ and $a_1 a_5 (A^3) \le 2$ (see Remark \ref{rem:numcharact}).

{\bf Case 1}: $\msi \in \IFiia$.
Suppose that $\msp \in X \setminus (H_x \cup \Upsilon_X)$.
Set $S := T_{\msp} \sim_{\mbQ} a_1 A$.
The pair $(X, \frac{1}{a_1} S)$ is log canonical at $\msp$ by Lemma \ref{lem:multtang}.
Let $\Gamma$ be the curve as in Lemma \ref{lem:isol-iia}.(1).
Then $a_3 A$ is a $(\msp,\Gamma)$-isolating class and we have 
\[
a_3 \cdot \frac{a_4 + a_5}{a_4 a_5} > 1 \quad \text{and} \quad
a_1 a_3 (A^3) \le 1.
\]
Thus, by Lemma \ref{lem:criwisol}, we have $\lct_{\msp} (X) \ge 1$.

Suppose that $\msi = 52$ and $\msp \in H_x \setminus (L_{xy} \cup \Upsilon_X)$.
We set $S := H_x \sim_{\mbQ} A$.
The pair $(X, S)$ is log canonical at $\msp$ and $a_1 a_3 A$ is a $\msp$-isolating class by Lemma \ref{lem:isol-iia}.
Thus we can apply Lemma \ref{lem:exclG} and conclude that $\lct_{\msp} (X) \ge 1$ since $1 \cdot a_1 a_3 (A^3) \le 1$.

Suppose that $\msi = 63$ and $\msp \in H_x \setminus (L_{xy} \cup \Upsilon_X)$.
We set $S := H_x \sim_{\mbQ} A$.
The pair $(X, S)$ is log canonical at $\msp$.
If $15A$ is a $\msp$-isolating class, then we can apply Lemma \ref{lem:exclG} and conclude that $\lct_{\msp} (X) \ge 1$ since $1 \cdot 15 (A^3) = 45/56 < 1$.
Otherwise $5A$ is a $(\msp,\Gamma)$-isolating class, where $\Gamma$ is the curve given in Lemma \ref{lem:isol-iia}(3).
We apply Lemma \ref{lem:criwisol} and conclude $\lct_{\msp} (X) \ge 1$ since 
\[
5 \cdot \frac{5}{56} = \frac{25}{56} < 1 \quad \text{and} \quad
1 \cdot 1 \cdot (A^3) = \frac{3}{56} < \frac{5}{56}.
\]

{\bf Case 2}: $\msi \in \IFiib$.
Suppose that $\msp \in X \setminus (L_{xy} \cup \Upsilon_X)$.
Set $S := T_{\msp} \sim_{\mbQ} a_1 A$.
The pair $(X, \frac{1}{a_1} S)$ is log canonical at $\msp$ by Lemma \ref{lem:lcttang}.
By Lemma \ref{lem:isol-iibc1}, either $a_3 A$ is a $\msp$-isolating class or a $(\msp, \Gamma)$-isolating class for some irreducible and reduced curve $\Gamma$ with $(A \cdot \Gamma) = 2/a_5$ and $\mult_{\msp} (\Gamma) = 1$.
In the former case, we have $\lct_{\msp} (X) \ge 1$ by Lemma \ref{lem:exclG} since $a_1 a_3 (A^3) \le 1$.
In the latter case, we have $\lct_{\msp} (X) \ge 1$ by Lemma \ref{lem:criwisol} since
\[
a_3 \cdot \frac{2}{a_5} \le 1 \quad \text{and} \quad a_1 \cdot 1 \cdot (A^3) \le \frac{2}{a_5}.
\]

{\bf Case 3}: $\msi \in \IFiic$.
Suppose that $\msp \in X \setminus (H_x \cup \Upsilon_X)$.
Then, in view of Lemma \ref{lem:isol-iibc1}, we can follow the same argument as in Case 2 and we conclude $\lct_{\msp} (X) \ge 1$ by applying Lemma \ref{lem:exclG} or \ref{lem:criwisol} for $S := T_{\msp} \sim_{\mbQ} a_1 A$.

Suppose that $\msp \in H_x \setminus (L_{xy} \cup \Upsilon_X)$.
We set $S := H_x \sim_{\mbQ} A$.
By Lemma \ref{lem:isol-iic}, $l A$ is a $\msp$-isolating class for some $l \le 1/(A^3)$.
Then we have $\lct_{\msp} (A) \ge 1$ by Lemma \ref{lem:exclG} since $1 \cdot l \cdot (A^3) \le 1$.
This completes the proof.
\end{proof}

\subsection{Singular points}

\begin{Lem} \label{lem:lctsingpt}
Let $X$ be a member of family No.~$\msi$ with $\msi \in \IFi \cup \IFii$ and $\msp \in X$ a singular point which is not marked $\QI, \EI$ or $\mathrm{d}$ as a subscript in \emph{Tables \ref{table:codim2Fanos-i}} and \emph{\ref{table:codim2Fanos-ii}}.
Then $\lct_{\msp} (X) \ge 1$.
\end{Lem}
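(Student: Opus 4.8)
The plan is to reduce to a finite case analysis over the unmarked quotient singular points listed in Tables \ref{table:codim2Fanos-i} and \ref{table:codim2Fanos-ii} and to treat each one by a single application of Lemma \ref{lem:exclL}, exactly as in Proposition \ref{prop:compLxy} but now allowing $\msp$ to be singular. The uniform starting point is that, since $a_0 = 1$, the singular locus of $\mbP$ is contained in $H_x$, so every singular point $\msp \in X$ lies on $H_x$; hence $S_1 := H_x \sim_{\mbQ} A$ always passes through $\msp$ and I take $c_1 = 1$. Because $x$ has weight $1$ it is a transverse local parameter at $\msp$, so on the index-one cover $\rho_\msp \colon \check X_\msp \to X$ (with $\check X_\msp$ smooth) the divisor $\rho_\msp^* H_x$ is a smooth coordinate hyperplane through $\check\msp$; thus $(X, H_x)$ is log canonical at $\msp$ and hypothesis (2) of Lemma \ref{lem:exclL} holds automatically.

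For the second divisor I take $S_2 := H_v \sim_{\mbQ} a_v A$, where $v \ne x$ is a coordinate of least weight vanishing at $\msp$, so that $c_2 = a_v$. When $y(\msp) = 0$, i.e.\ $\msp \in L_{xy}$ — which covers every coordinate-point singularity and, more generally, every singular point not lying on a coordinate line through $\msp_y$ — I take $v = y$. Then $\check\Gamma = \rho_\msp^* H_x \cap \rho_\msp^* H_y = \check L_{xy}$ is irreducible and reduced by Lemma \ref{lem:irredLxy}, and Remark \ref{rem:multLxy} gives the multiplicity bound $\mult_{\check\msp}(\check L_{xy}) \le a_1 = c_2$. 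Hypothesis (4) becomes $r a_1 (A^3) \le 1$, which I would check case by case from the tables (for instance $2 \cdot 1 \cdot \tfrac{1}{6}$ for the $\tfrac12$-point of No.~$42$, $3 \cdot 5 \cdot \tfrac{4}{231}$ for No.~$81$, and $5 \cdot 2 \cdot \tfrac{2}{65}$ for No.~$82$, all $\le 1$). This settles every singular point lying on $L_{xy}$.

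The genuinely separate cases are the points off $L_{xy}$, namely those sitting on a coordinate line $\mbP(a_1, a_j)$ with $y(\msp) \ne 0$; examples are the two $\tfrac12$-points of family No.~$66$ on the $(y,s)$-line and the two $\tfrac13$-points of family No.~$68$. For these I keep $S_1 = H_x$ but take $v$ to be the least-weight coordinate other than $x$ and $y$ vanishing at $\msp$ (typically $v = z$), verify $r a_v (A^3) \le 1$ directly, and then establish hypothesis (3) by a local computation: restricting $F_1, F_2$ to $(x = v = 0)$, setting the remaining vanishing coordinates to zero and passing to the $\tfrac1r$-cover, one reads off the germ of $\check\Gamma = (x = v = 0) \cap X$ at $\check\msp$ and confirms that it is irreducible, reduced, and of multiplicity $\le a_v$.

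The main obstacle is precisely this last verification. For the $L_{xy}$-points everything is packaged into Lemma \ref{lem:irredLxy} and Remark \ref{rem:multLxy}, but for each off-$L_{xy}$ point one must inspect by hand which monomials of $F_1$ and $F_2$ survive the restriction, in order to rule out a reducible or non-reduced intersection curve and to bound $\mult_{\check\msp}(\check\Gamma)$; a poor choice of $v$ can make $\check\Gamma$ reducible or violate the numerical bound, so some experimentation with the available coordinates is needed. Should no coordinate pair yield an irreducible intersection curve, or should hypothesis (4) be too tight, the fallback is Lemma \ref{lem:singptNE}: there one performs the Kawamata blowup $\varphi \colon Y \to X$, verifies the cone condition $(-K_Y)^2 \notin \Int \bNE(Y)$ from the second extremal contraction of $Y$ (available because $\msp$, being neither a $\QI$-, $\EI$- nor a distinguished point, is not a maximal centre and does not initiate a Sarkisov link), and exhibits a coordinate hyperplane section $S = H_v$ whose proper transform satisfies $\tilde S \sim_{\mbQ} -m K_Y$.
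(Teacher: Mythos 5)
Your primary route is not the paper's, and as written it contains two genuine gaps. First, hypothesis (2) of Lemma \ref{lem:exclL} is \emph{not} automatic for $S_1 = H_x$. Your justification --- that $x$ has weight $1$, hence $\rho_{\msp}^* H_x$ is a smooth coordinate hyperplane on the index-one cover --- is false in general: quasi-smoothness of $X$ only guarantees that the differentials of the remaining ambient coordinates \emph{together} cut out $T_{\check{\msp}} \check{X}_{\msp}$, and a single differential such as $dx$ may restrict to zero there. Concretely, at the unmarked $\frac{1}{5}(2,3)$-point $\msp_z$ of family No.~69, the only weight-one directions come from $z^3x$ and $z^3y$ in $F_2$, and if the coefficient of $z^3 y$ vanishes then $\rho_{\msp}^*H_x$ is singular at $\check{\msp}$; so log canonicity of $(X, H_x)$ is a genericity statement requiring proof, not a formal consequence of $\deg x = 1$. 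This is exactly why the paper, when it runs the Lemma \ref{lem:exclL} argument at the \emph{marked} points (Propositions \ref{prop:lctsingpI} and \ref{prop:singmost}), takes a \emph{general} member $S_1 \in |A|$ and invokes its quasi-smoothness. (Your dichotomy also has a factual slip: not every coordinate-point singularity lies on $L_{xy}$ --- the unmarked $\frac{1}{3}$-point of No.~70 is $\msp_y$, where $y \ne 0$.) Second, and more seriously, for the unmarked points off $L_{xy}$ (the three $\frac{1}{2}$-points of No.~43, the two $\frac{1}{2}$-points of No.~66, the $\frac{1}{3}$-points of Nos.~53, 62, 68, etc.) the verifications that carry all the content --- irreducibility and reducedness of $\check{\Gamma} = \rho_{\msp}^* H_x \cap \rho_{\msp}^* H_v$ and the bound $\mult_{\check{\msp}}(\check{\Gamma}) \le a_v$ --- are simply not performed; you defer them to ``experimentation'' with an escape clause if they fail. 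Nothing analogous to Lemma \ref{lem:irredLxy} or Remark \ref{rem:multLxy} exists in the paper for curves such as $H_x \cap H_z$, so this is new work, not bookkeeping, and a plan with an untested fallback is not a proof. Note also that your justification of the fallback's cone condition is backwards: in \cite{Okada1}, the condition $(-K_Y)^2 \notin \Int \bNE (Y)$ is the \emph{tool} by which such points are excluded as maximal centres, not a consequence of their failing to be maximal centres.

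What you relegate to a fallback is, in fact, the paper's entire proof, and it needs no case division. For every singular point not marked $\QI$, $\EI$ or $\mathrm{d}$, the Kawamata blowup $\varphi \colon Y \to X$ satisfies $(B^3) \le 0$ with $B = -K_Y$; by \cite[Section 8]{Okada1}, the condition $(B^2) \notin \Int \bNE (Y)$ holds at every singular point with $(B^3) \le 0$, and there is a prime divisor $S$ with $\tilde{S} \sim_{\mbQ} m B$ --- for the latter the paper also gives a one-line direct argument: some coordinate $x_j$ has weight $a_j < r$ equal to one of $1, a, r-a$, whence $\tilde{H}_{x_j} \sim_{\mbQ} a_j B$. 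Lemma \ref{lem:singptNE} then gives $\lct_{\msp}(X) \ge 1$ uniformly. By contrast, your route trades these two imported facts for a per-point analysis of intersection curves on index-one covers. Your numerics do hold where you checked them ($r a_1 (A^3) \le 1$ at the unmarked points on $L_{xy}$, and $r a_v (A^3) \le 1$ with $v$ of least weight off $L_{xy}$), so the on-$L_{xy}$ half of your plan could be completed along the lines of Proposition \ref{prop:lctsingpI}; but the off-$L_{xy}$ points and the log canonicity of $(X, H_x)$ remain genuinely open in your write-up.
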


\begin{proof}
Let $\msp \in X$ be as in the statement and let $\varphi \colon Y \to X$ be the Kawamata blowup at $\msp$.
Then $(B^3) \le 0$, where $B = -K_Y$.
It is proved in \cite[Section 8]{Okada1} that the condition $(B^2) \notin \bNE (Y)$ is satisfied for any singular point $\msp \in X$ with $(B^3) \le0$.
It is also proved that in \cite[Section 8]{Okada1} that there exists a prime divisor $S \in m B$ on $X$ for some $m \ge 1$ such that $\tilde{S} \sim_{\mbQ} m B$, where $\tilde{S}$ is the proper transform of $S$ by the Kawamata blowup at $\msp$.
This can also be seen as follows: Suppose that $\msp \in X$ is of type $\frac{1}{r} (1,a,r-a)$, then there exists a homogeneous coordinate $x_j$ whose weight $a_j$ is less than $r$ and coincides with one of $1, a, r-a$.
In this case the divisor $H_{x_j} \sim_{\mbQ} a_j A$ satisfies $\tilde{H}_{x_j} \sim_{\mbQ} a_j B$.
Therefore we can apply Lemma \ref{lem:singptNE} and conclude that $\lct_{\msp} (X) \ge 1$.
\end{proof}
 
\begin{Prop} \label{prop:lctsingpI}
Let $X$ be a member of family No.~$\msi$ with $\msi \in \IFi$.
Then $\lct_{\msp} (X) \ge 1$ for any singular point $\msp \in X$.
\end{Prop}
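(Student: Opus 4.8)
The plan is to sort the singular points of $X$ by the symbol attached to them in the last column of Table~\ref{table:codim2Fanos-i}. A point $\msp$ carrying no subscript is exactly one to which Lemma~\ref{lem:lctsingpt} applies, so there $\lct_{\msp}(X) \ge 1$ is immediate; it remains to handle the points marked $\QI$, $\EI$ or $\mathrm{d}$. The point I want to make is that all of these, regardless of the symbol, can be disposed of by one and the same application of Lemma~\ref{lem:exclL} built out of the curve $L_{xy}$.

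First I would note that every point $\msp$ marked $\QI$, $\EI$ or $\mathrm{d}$ is a coordinate point $\msp_v$ with $v \in \{z,s,t,u\}$, so that $x$ and $y$ vanish at $\msp$ and hence $\msp \in L_{xy}$; this is confirmed coordinate point by coordinate point from the equations $G_1 = G_2 = 0$ in Table~\ref{table:Lxy-i}. Since $\Sing(L_{xy}) = \emptyset$ for all families of $\IFi$ except No.~$82$, and the unique point of $\Sing(L_{xy})$ for No.~$82$ is the non-subscripted point $\msp_z$, each $\msp$ under consideration is a nonsingular point of the curve $L_{xy}$, whence $\mult_{\check{\msp}}(\check{L}_{xy}) = 1$ by Remark~\ref{rem:multLxy}.

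I would then invoke Lemma~\ref{lem:exclL} with $S_1 := H_x \in |A|$ and a general $S_2 \in |a_1 A|$, so that $c_1 = 1$ and $c_2 = a_1$. As $a_1 < a_2$ for members of $\IFi$, the only monomials of degree $a_1$ are $x^{a_1}$ and $y$, so $S_2|_{H_x} = H_y|_{H_x}$ and thus $S_1 \cap S_2 = L_{xy}$; condition~(3) then holds because $\check{\Gamma} = \check{L}_{xy}$ is irreducible and reduced by Lemma~\ref{lem:irredLxy} and satisfies $0 < \mult_{\check{\msp}}(\check{L}_{xy}) = 1 \le a_1 = c_2$. Writing $r$ for the index of $\msp$ and using $r \le a_5$ together with the numerical description $a_1 a_5 (A^3) \le 1$ of $\IFi$ in Remark~\ref{rem:numcharact}, condition~(4) follows from
\[
r c_1 c_2 (A^3) = r a_1 (A^3) \le a_1 a_5 (A^3) \le 1 .
\]
Condition~(1) is clear, and the whole argument reduces to condition~(2): that $(X, H_x)$ is log canonical at $\msp$.

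This last verification is the crux of the proof. I would settle it by passing to the index one cover $\rho_{\msp} \colon \check{X}_{\msp} \to X$ and observing that, in every one of the finitely many cases, the singularity type $\frac{1}{r}(1,a,r-a)$ of $\msp$ has its weight-$1$ eigendirection cut out by the weight-$1$ coordinate $x$; equivalently, $x$ is one of the three orbifold coordinates at $\msp$ and not one of the two coordinates eliminated through $F_1 = F_2 = 0$. Consequently $\check{H}_x = (x = 0)$ is a smooth divisor on the smooth germ $\check{X}_{\msp}$, so $(\check{X}_{\msp}, \check{H}_x)$ is log smooth, hence log canonical, and therefore $(X, H_x)$ is log canonical at $\msp$. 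With condition~(2) in place, Lemma~\ref{lem:exclL} gives $\lct_{\msp}(X) \ge 1$, and the main obstacle is simply the family-by-family confirmation that $x$ persists as a local coordinate, which one reads off from the explicit shapes of $F_1$ and $F_2$ (for the distinguished points this is already encoded in Lemma~\ref{lem:coordflopcurve}).
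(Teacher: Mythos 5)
Your proof is correct and follows essentially the same route as the paper: reduce to the points marked $\QI$, $\EI$, $\mathrm{d}$ via Lemma~\ref{lem:lctsingpt}, then apply Lemma~\ref{lem:exclL} with $S_1 \in |A|$ and $S_2 \in |a_1 A|$ cutting out $L_{xy}$, using Lemma~\ref{lem:irredLxy}, Table~\ref{table:Lxy-i}, Remark~\ref{rem:multLxy} and the inequality $r c_1 c_2 (A^3) \le a_1 a_5 (A^3) \le 1$. The only divergence is in the verification of condition~(2): the paper takes $S_1$ to be a general (hence quasi-smooth) member of $|A|$ so that $(X,S_1)$ is log canonical, whereas you take $S_1 = H_x$ and check log canonicity directly on the index one cover by confirming case-by-case that $x$ survives as an orbifold coordinate at $\msp$ --- both verifications are valid.
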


\begin{proof}
By Lemma \ref{lem:lctsingpt}, it remains to consider a singular point $\msp \in X$ which is marked $\QI$, $\EI$ or $\mathrm{d}$ as a subscript in Table \ref{table:codim2Fanos-i}.
Let $\msp$ be such a point and suppose that it is of type $\frac{1}{r} (1,a,r-a)$.
Note that $\msp \in L_{xy}$.
Let $S_1 \in |A|$ be a general member so that it is quasi-smooth and thus $(X, S_1)$ is log canonical at $\msp$.
Let $S_2 \in |a_1 A|$ be a general member so that $S_1 \cdot S_2 = L_{xy}$.
Since $a_1 a_5 (A^3) \le 1$ (see Remark \ref{rem:numcharact}) and $r \le a_5$, we have $r \cdot 1 \cdot a_1 (A^3) \le a_5 a_1 (A^3) \le 1$.
Therefore, by the description of singularities given in Table \ref{table:Lxy-i} and by Remark \ref{rem:multLxy}, we can apply Lemma \ref{lem:exclL} and conclude that $\lct_{\msp} (X) \ge 1$.
\end{proof}

\begin{Prop} \label{prop:singmost}
Let $X$ be a member of family No.~$\msi$ with $\msi \in \IFii$.
Then the following assertions hold.
\begin{enumerate}
\item $\lct_{\msp} (X) \ge 1$ for any singular point $\msp \in X$ except possibly for distinguished singular points.
\item If $\msi \in \{79, 80, 83\}$, then $\lct_{\msp} (X) \ge 1$ for any singular point $\msp \in X$.
\end{enumerate}
\end{Prop}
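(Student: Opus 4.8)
The plan is to split the singular points of $X$ according to whether they are distinguished, proving assertion (1) for the non-distinguished ones and then supplying, for $\msi \in \{79,80,83\}$, the extra input at the distinguished point needed for assertion (2). A non-distinguished singular point $\msp$ is either unmarked or marked $\QI$ or $\EI$ in Table \ref{table:codim2Fanos-ii}. If $\msp$ is unmarked, then Lemma \ref{lem:lctsingpt} gives $\lct_{\msp}(X) \ge 1$ directly. If $\msp$ is marked $\QI$ or $\EI$, I would run the argument of Proposition \ref{prop:lctsingpI} essentially verbatim: such a point is a coordinate point $\msp_v$ with $v \in \{z,s,t\}$, hence lies on $L_{xy}$, so for general $S_1 \in |A|$ and $S_2 \in |a_1 A|$ one has $S_1 \cdot S_2 = L_{xy}$ and $(X, S_1)$ log canonical at $\msp$, while Lemma \ref{lem:irredLxy} and the description in Table \ref{table:Lxy-ii}, together with Remark \ref{rem:multLxy}, guarantee that $L_{xy}$ is irreducible and reduced with $\mult_{\check{\msp}}(\check{L}_{xy}) \le a_1$. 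Applying Lemma \ref{lem:exclL} with $c_1 = 1$, $c_2 = a_1$ and $r$ the index of $\msp$, the only nontrivial point is the inequality $r a_1 (A^3) \le 1$.

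Here the $\IFii$ case only has the weaker bound $a_1 a_5 (A^3) \le 2$ available (Remark \ref{rem:numcharact}), so $r a_1 (A^3) \le 1$ does not follow formally; however every $\QI$/$\EI$ point has index $r$ strictly smaller than $a_5$, and a short family-by-family check confirms the inequality in each case (for instance $r a_1 (A^3) = 4 \cdot 2 \cdot \tfrac{1}{8} = 1$ for No.~43, and strictly less than $1$ for No.~40, 53, 58, 67, 70, 80). This completes assertion (1).

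Since assertion (1) already covers the non-distinguished singular points, for assertion (2) it remains to treat the unique distinguished point $\msp = \msp_u$ for $\msi \in \{79,80,83\}$. I would choose coordinates as in Lemma \ref{lem:coordflopcurve}, so that $\msp$ is of type $\tfrac{1}{a_k}(1, a_{i_1}, a_{i_2})$ with $a_k = a_5$, and aim to apply Lemma \ref{lem:somedistsingpt}, verifying its two hypotheses. The numerical hypothesis $a_k a_{j_2}(A^3) \le 2$ follows from $a_{j_2} = d_2 - a_k$: in all three families it equals exactly $2$ (e.g.\ $a_k a_{j_2}(A^3) = 13 \cdot 7 \cdot \tfrac{2}{91} = 2$ for No.~80, and likewise for No.~79 and 83). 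For the geometric hypothesis $H_x \cap \Upsilon_{\msp} = \{\msp\}$ I would use that $\Upsilon_{\msp} = \bigcup_l \Gamma_l$ with $(A \cdot \Gamma_l) = 1/a_k$ and $\msp \in H_x \cap \Gamma_l$: if $\Gamma_l \not\subset H_x$ then $(H_x \cdot \Gamma_l) = 1/a_k < 1$ leaves no room for a second intersection point, forcing $\Gamma_l \cap H_x = \{\msp\}$. Hence $H_x \cap \Upsilon_{\msp} = \{\msp\}$ holds precisely when no $\Gamma_l$ is contained in $H_x$, equivalently when the $e$ points of $\overline{\Upsilon}_{\msp}$ all avoid $(x = 0)$. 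Restricting the defining polynomials $\bar{G}_1, \bar{G}_2$ of $\overline{\Upsilon}_{\msp}$ to $x = 0$, a degree count shows that each becomes a scalar multiple of a pure power of one of the two variables of weights $a_{i_1}, a_{i_2}$ (concretely $t^2$ together with $z^4$, $y^5$, $s^3$ for No.~79, 80, 83 respectively), and these two pure powers have no common zero in $\mbP(a_{i_1}, a_{i_2})$ as soon as their coefficients are nonzero, which is a generality condition ensured by Condition \ref{cd}. With both hypotheses checked, Lemma \ref{lem:somedistsingpt} yields $\lct_{\msp}(X) \ge 1$.

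I expect the main obstacle to be the geometric hypothesis $H_x \cap \Upsilon_{\msp} = \{\msp\}$ in assertion (2): unwinding it to the statement $\overline{\Upsilon}_{\msp} \cap (x = 0) = \emptyset$ and then certifying, via the degree count and the nonvanishing of the relevant coefficients, that this intersection is empty is where the generality of $X$ enters essentially, and it must be carried out separately for each of No.~79, 80, 83. By contrast, the two numerical inequalities $r a_1 (A^3) \le 1$ and $a_k a_{j_2}(A^3) \le 2$, although requiring a per-family computation, are entirely routine.
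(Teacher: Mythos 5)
Your proposal is correct and follows essentially the same route as the paper: Lemma \ref{lem:lctsingpt} for the unmarked singular points, Lemma \ref{lem:exclL} with general $S_1 \in |A|$, $S_2 \in |a_1 A|$ and $S_1 \cdot S_2 = L_{xy}$ (via Lemma \ref{lem:irredLxy} and Remark \ref{rem:multLxy}) at the $\QI$/$\EI$ points, and Lemma \ref{lem:somedistsingpt} at the distinguished points of No.~79, 80, 83, where your verification that $a_k a_{j_2}(A^3) = 2$ and that $H_x \cap \Upsilon_{\msp} = \{\msp\}$ reduces to the emptiness of $\overline{\Upsilon}_{\msp} \cap (x=0)$, certified by the pure powers $t^2$ and $z^4$, $y^5$, $s^3$, matches the paper's Example \ref{ex:Upsilon} (and you even supply the degree argument for the reduction, which the paper only asserts). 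The one place you deviate, the family-by-family check of $r a_1 (A^3) \le 1$, is a correct but unnecessary detour caused by a misreading of Remark \ref{rem:numcharact}: for $\msi \in \IFii$ it gives not only $a_1 a_5 (A^3) \le 2$ but also $a_1 a_3 (A^3) \le 1$, and every $\QI$/$\EI$ point has index $r \le a_3$, so $r a_1 (A^3) \le a_1 a_3 (A^3) \le 1$ holds uniformly, which is exactly how the paper argues.
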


\begin{proof}
Let $\msp$ be a singular point marked $\QI$ or $\EI$.
Suppose that $\msp \in X$ is of type $\frac{1}{r} (1,a,r-a)$.
Then, by Table \ref{table:Lxy-ii}, we see that $r \le a_3$ (in fact $r = a_3$ except when $\msi = 40$ and in that case $r = a_2 < a_3$).
Moreover $\msp \in L_{xy}$.
Let $S_1 \in |A|$ be a general member so that it is quasi-smooth and thus $(X, S_1)$ is log canonical at $\msp$.
Let $S_2 \in |a_1 A|$ be a general member so that $S_1 \cdot S_2 = L_{xy}$.
We have $r \cdot 1 \cdot a_1 (A^3) \le a_1 a_3 (A^3) \le 1$ (see Remark \ref{rem:numcharact}).
By the description of singularities of $L_{xy}$ given in Table \ref{table:Lxy-ii}, and by Remark \ref{rem:multLxy}, we can apply Lemma \ref{lem:exclL} and conclude that $\lct_{\msp} (X) \ge 1$.
In view of Lemma \ref{lem:lctsingpt}, this proves (1).

We prove (2).
By the above argument, it remains to consider the distinguished singular point $\msp \in X$ for $\msi \in \{79,80,83\}$.
In the notation of Lemma \ref{lem:somedistsingpt} (or of Section \ref{sec:flop}), we have $a_k = a_5$.
Moreover $a_{j_2} = 6, 7, 4$ if $\msi = 79, 80, 83$, respectively.
It is straightforward to compute that $a_k a_{j_2} (A^3) = 2$ for $\msi \in \{79, 80, 83\}$ and $H_x \cap \Upsilon_{\msp} = \{\msp\}$ (see Example \ref{ex:Upsilon} below).
Thus, by Lemma \ref{lem:somedistsingpt}, we have $\lct_{\msp} (X) \ge 1$.
\end{proof}

\begin{Ex} \label{ex:Upsilon}
We observe that $H_x \cap \Upsilon_{\msp} = \{\msp\}$ for the distinguished singular point of a member of family No.~$\msi$ with $\msi \in \{79,80,83\}$.
We use notation in Section \ref{sec:flop}.
We have $\msp = \msp_u$ and $x_k = u$, i.e.\ $a_5 = a_5$.
We see that $H_x \cap \Upsilon_{\msp} = \{\msp\}$ if and only if the set
\[
\begin{split} &(x = 0) \cap \overline{\Upsilon}_{\msp} \subset \mbP (1, a_{i_1}, a_{i_2}), \\
\cong & \ \Xi_{\msp} := (F_1|_{x = x_{j_1} = x_{j_2} = x_k = 0} = F_2|_{x = x_{j_1} = x_{j_2} = x_k = 0} = 0) \subset \mbP (a_{i_1}, a_{i_2}),
\end{split}
\]
is empty.

Suppose that $\msi = 79$.
Then $X = X_{18,20} \subset \mbP (1,4,5,6,9,14)$.
Then $x_{i_1} = z, x_{i_2} = t$, i.e.\ $a_{i_1} = 5, a_{i_2} = 9$, and we have
\[
\Xi_{\msp} = (F_1 (0,0,z,0,t,0) = F_2 (0,0,z,0,t,0) = 0) \subset \mbP (5,9).
\]
We have $F_1 (0,0,z,0,t,0) = t^2$ and $F_2 (0,0,z,0,t,0) = z^4$ since $X$ is quasi-smooth.
It is then easy to see that $\Xi_{\msp} = \emptyset$.

If $\msi = 80$, then, by a similar argument, we have
\[
\Xi_{\msp} = (F_1 (0,y,0,0,t,0) = F_2 (0,y,0,0,t,0) = 0) \subset \mbP (4,9)
\]
and $F_1 (0,y,0,0,t,0) = t^2$, $F_2 (0,y,0,0,t,0) = y^5$.

If $\msi = 83$, then
\[
\Xi_{\msp} = (F_1 (0,0,0,s,t,0) = F_2 (0,0,0,s,t,0) = 0) \subset \mbP (7,10)
\]
and $F_1 (0,0,0,s,t,0) = t^2$, $F_2 (0,0,0,s,t,0) = s^3$.
Thus $\Xi_{\msp} = \emptyset$ in both cases.
\end{Ex}

Theorems \ref{thm:main1} and \ref{thm:main2} follow from Propositions \ref{prop:nonsingpart2i}, \ref{prop:nonsingpart2ii}, \ref{prop:lctsingpI} and \ref{prop:singmost}.

\section{Examples of super-rigid affine $4$-folds} \label{sec:ex}

Let $Y$ be a normal projective $\mbQ$-factorial variety of Picard number $1$ and let $S$ be a prime divisor on $Y$ such that the pair $(Y,S)$ is plt (see \cite[Definition 2.34]{KM} for the definition of plt). 
Note that $Y \setminus S$ is an affine variety since $S$ is ample, and we say that $Y \setminus S$ is an {\it affine Fano variety} with completion $Y$ and boundary $S$ if $-(K_Y +S)$ is ample.
Following is the definition of super-rigid affine Fano variety, where we refer readers to \cite{CDP} for the detail of the condition (2) below.

\begin{Def}[{\cite[Definition 1.4]{CDP}}]
An affine Fano variety $(Y, S)$ is said to be {\it super-rigid} if the following conditions hold.
\begin{enumerate}
\item For every affine Fano variety $Y' \setminus S'$ with completion $Y'$ and boundary $S'$, if there exists an isomorphism $\phi \colon Y \setminus S \to Y' \setminus S'$, then $\phi$ is induced by an isomorphism $Y \cong Y'$ that maps $S$ onto $S'$.
\item The affine Fano variety $Y \setminus S$ does not contain relative affine Fano varieties of the same dimension over varieties of positive dimensions.
\end{enumerate}
\end{Def}

It is worthwhile to mention that, for a super-rigid affine Fano varieties $Y \setminus S$, we have $\Aut (Y \setminus S) = \Aut (S, Y)$ and $Y \setminus S$ does not contain an open $\mbA^1$-cylinder, that is, it does not contain a Zariski open subset which is isomorphic to $Z \times \mbA^1$ for some quasi-projective varieties.

As an application of Theorem \ref{thm:main1}, we have the following examples.
In the following statement, a $\mbQ$-Fano $4$-fold means a normal projective $\mbQ$-factorial Fano variety with at most log terminal singularities.

\begin{Thm}
Let $(d; a_0,a_1,a_2,a_3,a_4,a_5)_{\msi}$ be one of the following septuple.
\begin{longtable}{lll}
$(14;1,2,5,6,7,9)_{66}$ & $(15;1,2,5,6,7,9)_{66}$ & $(15;1,3,5,6,7,8)_{68}$ \\
$(16;1,1,5,7,8,9)_{69}$ & $(18;1,1,6,8,9,10)_{77}$ & $(22;1,2,5,9,11,13)_{82}$ \\ 
$(21;1,3,4,7,10,17)_{83}$ & &
\end{longtable}
Let $d'$ be the positive integer such that a general weighted complete intersection of type $(d, d')$ in $\mbP (a_0,\dots,a_5)$ belongs to family No.~$\msi$.
Let $Y$ be a general weighted hypersurface of degree $d$ in $\mbP (a_0,\dots,a_5)$ and $X \subset Y$ be the hypersurface in $Y$ defined by a general homogeneous polynomial of degree $d'$.
Then $Y$ is a quasi-smooth and well formed $\mbQ$-Fano $4$-fold of Picard number $1$ and the complement $Y \setminus X$ is an affine super-rigid Fano $4$-fold with completion $Y$ and boundary $X$.
\end{Thm}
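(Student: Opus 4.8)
The plan is to split the statement into the geometric assertions about the completion $Y$ and the birational assertion of super-rigidity, the latter being where Theorem~\ref{thm:main1} is decisive. Throughout write $H = \mcO_Y(1)$ and recall that, since $X = X_{d,d'}$ has index $1$, one has $\sum_i a_i = d + d' + 1$. \textbf{For $Y$ itself} I would first verify that, for each listed degree $d$, the general hypersurface $Y = Y_d \subset \mbP(a_0,\dots,a_5)$ is quasi-smooth and well formed; this is a direct check of the Jacobian criterion along the coordinate strata, entirely parallel to the analysis of the $3$-folds $X$ in \cite{IF} and \cite{CCC}, and it is also here that the admissible septuples get singled out, since only for these $d$ does $Y_d$ come out quasi-smooth and well formed. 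Quasi-smoothness then gives that $Y$ has only cyclic quotient singularities, hence is log terminal, while the weighted Lefschetz theorem for quasi-smooth well-formed weighted hypersurfaces of dimension $\ge 3$ gives $\Cl(Y) = \mbZ \cdot H$; in particular $Y$ is $\mbQ$-factorial of Picard number $1$. Finally $-K_Y = \mcO_Y(\sum_i a_i - d) = \mcO_Y(d'+1)$ is ample, so $Y$ is a $\mbQ$-Fano $4$-fold.

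\textbf{The affine Fano structure} comes next. By adjunction on $Y$ the boundary $X$ lies in $|d'H|$, whence $-(K_Y + X) = \mcO_Y((d'+1)-d') = H$ is ample; to conclude that $Y\setminus X$ is an affine Fano $4$-fold it remains to check that $(Y,X)$ is plt. For general $X$ the boundary is a quasi-smooth Fano $3$-fold, hence normal with terminal (in particular klt) singularities, and $Y$ is quasi-smooth along $X$; by inversion of adjunction $(Y,X)$ is then plt in a neighbourhood of $X$, and away from $X$ the pair is simply the klt variety $Y$. Thus $(Y,X)$ is a plt log Fano pair and $Y\setminus X$ is an affine Fano $4$-fold with completion $Y$ and boundary $X$.

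\textbf{Super-rigidity} is the main content, and for both defining conditions I would invoke the criterion of Cheltsov--Dubouloz--Park \cite{CDP}, reducing it to a logarithmic birational rigidity statement for the pair $(Y,X)$ and running the Noether--Fano--Iskovskikh method. Any isomorphism onto another affine Fano that fails to extend to the completions, and any affine-Fano fibration structure on $Y\setminus X$, produces a mobile linear system $\mcM \subset |nH|$ on $Y$ for which the log pair $(Y, X + \tfrac1n\mcM)$ is not log canonical, i.e.\ admits a maximal centre $Z \subset Y$. The decisive dichotomy is whether $Z$ lies on the boundary: if $Z \subset X$, then inversion of adjunction forces $(X, \tfrac1n\mcM|_X)$ to fail log canonicity, whereas a general $M\in\mcM$ yields an effective divisor with $\tfrac1n M|_X \sim_{\mbQ} H|_X = -K_X$, contradicting $\lct(X)=1$ from Theorem~\ref{thm:main1}. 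This is exactly where the alpha-invariant computation is used, and it removes every maximal centre lying on the boundary.

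\textbf{The interior is the main obstacle.} There remains a maximal centre $Z$ meeting the open part $Y\setminus X$, and this is the hardest case. Here I would combine the explicit quasi-smooth geometry of $Y$ with the birational bi-rigidity of the boundary established in \cite{Okada2, Okada3}: through the correspondence of \cite{CDP}, an interior maximal centre would yield a Mori fibre space birational to $X$ distinct from both $X$ and $X'$, contradicting bi-rigidity, so that the only admissible link is the distinguished one $X \ratmap X'$, and one must then check that this link does not lift to a boundary-preserving birational self-map of $(Y,X)$ other than an isomorphism. The crux is precisely this: because $X$ is only birationally \emph{bi}-rigid rather than rigid, the technical heart of the proof is to show that the extra birational structure of the boundary does not descend to a non-trivial structure on the affine $4$-fold, with $\lct(X)=1$ clearing away all boundary contributions and the bi-rigidity of $X$ controlling the interior, so that both conditions (1) and (2) of super-rigidity hold.
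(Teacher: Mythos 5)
Your treatment of $Y$ and of the affine Fano structure is fine and essentially matches the paper (quasi-smoothness and well-formedness via \cite{IF}, adjunction, plt-ness of the pair). The genuine gap is in the super-rigidity argument, precisely at the step you yourself call the crux. First, the interior case you worry about does not arise in the framework of \cite{CDP}: the birational maps relevant to conditions (1) and (2) of super-rigidity restrict to isomorphisms (resp.\ well-defined fibrations) on $Y \setminus X$, so the mobile linear systems produced by the Noether--Fano argument are free on the interior, and every non-log-canonical (maximal) centre of the corresponding log pair automatically lies on the boundary $X$. Second, and more seriously, your proposed way of excluding interior centres is a category error: a maximal centre on the $4$-fold $Y$ would produce Mori fibre spaces birational to $Y$, not to the $3$-fold $X$, so the birational bi-rigidity of $X$ established in \cite{Okada2,Okada3} has no bearing on it; the contradiction you claim (``a Mori fibre space birational to $X$ distinct from both $X$ and $X'$'') simply does not follow. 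Since you declare this step to be the technical heart and leave it as a sketch that cannot be repaired along the lines indicated, the proposal is incomplete.

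The paper's actual proof is much shorter: it applies \cite[Theorem B]{CDP} as a black box, whose hypothesis is $\alpha (X, \Diff_X (0)) \ge 1$. The only substantive verifications are (i) $Y$ is a quasi-smooth, well-formed $\mbQ$-Fano $4$-fold of Picard number $1$; (ii) $\Diff_X (0) = 0$, which holds because a nonzero different would require a codimension-$2$ component of $\Sing (Y)$ contained in $X$, impossible since both $X$ and $Y$ are quasi-smooth and well formed; and (iii) $\lct (X) = 1$ by Theorem \ref{thm:main1}, whence $\alpha (X, \Diff_X (0)) = \alpha (X, 0) = 1$. Note that your own boundary argument also silently needs point (ii): inversion of adjunction yields non-log-canonicity of the pair $\bigl(X, \Diff_X (0) + \tfrac{1}{n} \mcM|_X \bigr)$, different included, so without knowing $\Diff_X (0) = 0$ the comparison with $\lct (X)$ is not immediate --- a verification you omit entirely.
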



\begin{proof}
It is straightforward to see that $Y$ is quasi-smooth and well formed by the criteria given in \cite{IF}.
It is obvious that $Y$ is $\mbQ$-Fano because $-K_Y$ is ample by adjunction and $Y$ has only cyclic quotient singularities.
Note that $\Diff_X (0) \ne 0$ only if there exists a component of $\Sing (X)$ of codimension $2$ in $Y$ that is contained in $X$.
The latter is impossible since both $X$ and $Y$ are quasi-smooth and well formed.
Thus 
\[
\alpha (X, \Diff_X (0)) =  \alpha (X,0) = \lct (X) = 1,
\]
where the last equality follows from Theorem \ref{thm:main1} since $X$ is a general member of family No.~$\msi$ with $\msi \in \IFi \cup \{79,80,83\}$, and we conclude from \cite[Theorem B]{CDP} that $Y \setminus X$ is an affine super-rigid Fano $4$-fold.
\end{proof}


\end{document}